\numberwithin{equation}{section}
\newtheorem{theorem}{Theorem}[section]
\newtheorem{corollary}[theorem]{Corollary}
\newtheorem{lemma}[theorem]{Lemma}
\newtheorem{proposition}[theorem]{Proposition}
\newtheorem{definition}[theorem]{Definition}
\newtheorem{remark}[theorem]{Remark}
\newcommand{\C}{\mathbb{C}}
\newcommand{\D}{\mathbb{D}}
\renewcommand{\L}{\mathbb{L}}
\newcommand{\R}{\mathbb{R}}
\newcommand{\V}{\mathbb{V}}
\newcommand{\cE}{{\ensuremath{\mathcal E}}}
\newcommand{\cV}{{\ensuremath{\mathbb V}}}
\newcommand{\ff}{{\mbox{\boldmath$f$}}}
\newcommand{\mm}{{\mbox{\boldmath$m$}}}
\newcommand{\ppi}{{\mbox{\boldmath$\pi$}}}
\newcommand{\sfd}{{\sf d}}
\newcommand{\sfg}{{\sf g}}
\newcommand{\sfP}{{\sf P}}
\newcommand{\sfR}{{\sf R}}
\newcommand{\rmc}{{\mathrm c}}
\newcommand{\rme}{{\mathrm e}}
\newcommand{\rmC}{{\mathrm C}}
\newcommand{\rmD}{{\mathrm D}}
\newcommand{\rmI}{{\mathrm I}}
\newcommand{\Kliminf}{K\kern-3pt-\kern-2pt\mathop{\rm lim\,inf}\limits}  % Kuratowski liminf di insiemi
\newcommand{\supp}{\mathop{\rm supp}\nolimits}   % supporto 
\newcommand{\Lip}{\mathop{\rm Lip}\nolimits}          %Lipschitz
\renewcommand{\d}{{\mathrm d}}
\newcommand{\dt}{{\d t}}
\newcommand{\restr}[1]{\lower3pt\hbox{$|_{#1}$}}
\newcommand{\topref}[2]{\stackrel{\eqref{#1}}#2}
\newcommand{\down}{\downarrow}              %frecce in su e in giu nei limiti
\newcommand{\weakto}{\rightharpoonup}
\newcommand{\eps}{\varepsilon}  
\newcommand{\nchi}{{\raise.3ex\hbox{$\chi$}}}
\newcommand{\forevery}{\text{for every }}
\newcommand{\fr}{\hfill$\blacksquare$}                      %quadratino nero alla fine del remark
\def\qed{\ifmmode % if math mode, assume display: omit penalty etc.
  \else \leavevmode\unskip\penalty9999 \hbox{}\nobreak\hfill
  \fi               
    \qquad           \hbox{\hskip.5em $\square$
    %     \vrule width.4em height.6em
                 %depth.05em
                \hskip.1em}}
\newcommand{\GGG}{\color{blue}}
\renewcommand{\mm}{\mathfrak m} % Misura di riferimento
\newcommand{\Gbil}[2]{\Gamma\big(#1,#2\big)}   %\Gamma(a,b)
\newcommand{\Gq}[1]{\Gamma\big(#1\big)}          %\Gamma (a)
\newcommand{\Probabilities}[1]{\mathscr P(#1)}          % misure di probabilita'
\newcommand{\ProbabilitiesTwo}[1]{\mathscr P_2(#1)}     % misure di
\newcommand{\fnd}[3]{\sfg_{\ifthenelse{\equal{#2}{}}{}{(#2)}}(#3,#1)}
\newcommand{\Vhom}[1]{\V_{\kern-2pt
\ifthenelse{\equal{#1}{1}}{\cE}{#1}}}
\newcommand{\Vdual}[1]{\V_{\kern-2pt
\ifthenelse{\equal{#1}{1}}{\cE}{\rm #1}}'}
\newcommand{\Ddual}{\D_{\kern-1pt \cE}'}
\newcommand{\tGq}[2]{\Gamma_{\kern-2pt#1}(#2)}  % \Gamma tilde relativo a densita
\newcommand{\tGbil}[3]{\Gamma_{\kern-2pt#1}(#2,#3)}
\newcommand{\Sobolev}[4]{W^{1,2}(0,#1;#2,#3)
\ifthenelse{\equal{#4}{}}{}{\cap
  L^{#4}_{\ifthenelse{\equal{#4}{\infty}}w{}}(0,T;L^{#4}(X,\mm))}}
\newcommand{\GGamma}{\mathbf\Gamma}
\renewcommand{\C}{\mathsf{Ch}}
\def\XXint#1#2#3{{\setbox0=\hbox{$#1{#2#3}{\int}$}
\vcenter{\hbox{$#2#3$}}\kern-.5\wd0}}
\renewcommand{\mm}{\mathfrak m}
\newcommand{\CD}{{\sf CD}}
\newcommand{\RCD}{{\sf RCD}}
\newcommand{\BE}{{\sf BE}}
\newcommand{\weakgrad}[1]{|\rmD #1|_w}                % gradient ottenuto integrando lungo "quasi tutte" le geodetiche
\renewcommand{\C}{{\sf Ch}_*}
\newcommand{\dom}{\rmD}
\newcommand{\Emeas}[2]{\Gamma^\star_{2,#1}[#2]}
\newcommand{\emeas}[2]{\gamma_{2,#1}[#2]}
\newcommand{\ebilmeas}[3]{\gamma_{2,#1}[#2,#3]}
\renewcommand{\tGq}[1]{\tilde \Gamma(#1)}
\renewcommand{\tGbil}[2]{\tilde \Gamma(#1,#2)}
\renewcommand{\D}{\dom_{L^2}(\Delta)}
\newcommand{\Dinfty}{\D\cap L^\infty(X,\mm)}
\newcommand{\Vinfty}{\V\cap L^\infty(X,\mm)}
\renewcommand{\C}{{\sf Ch}}
\newcommand{\DG}{\dom_\V(\Delta)\cap \GLip}
\newcommand{\GLip}{\L_\Gamma}
\newcommand{\XX}{{\bf X}}
\title{On the Bakry-\'Emery condition, the gradient estimates and the
Local-to-Global property of $\RCD^*(K,N)$ metric measure spaces}
\begin{document}

\author{Luigi Ambrosio\
   \thanks{Scuola Normale Superiore, Pisa, \textsf{l.ambrosio@sns.it}}
   \and
      Andrea Mondino\
    \thanks{ETH, Zurich, \textsf{andrea.mondino@math.ethz.ch}}
      \and
   Giuseppe Savar\'e\
   \thanks{University of Pavia, \textsf{giuseppe.savare@unipv.it}}
   }

\maketitle

\begin{abstract}
We prove higher summability and regularity of
$\Gq f$ for functions $f$ in spaces satisfying the Bakry-\'Emery condition
$\BE(K,\infty)$. 

As a byproduct, we obtain various equivalent weak formulations of 
$\BE(K,N)$ and we prove 
the Local-to-Global property of the $\RCD^*(K,N)$ condition in
locally compact metric measure spaces $(X,\sfd,\mm)$, 
without assuming a priori the non-branching condition on the metric space. 
\end{abstract}

\tableofcontents

\section{Introduction}

\paragraph{\em Curvature-dimension conditions for metric-measure spaces.}
The theory of synthetic Ricci lower bounds has been so far developed
along two lines: 
the \textsc{Bakry-\'Emery} approach \cite{Bakry-Emery84},
see also \cite{Bakry06,Bakry-Ledoux06}, uses the formalism of Dirichlet forms (and the heat flow associated with the Dirichlet form) and it
is based on the so-called $\BE(K,N)$ condition, 
formally expressed in
differential terms by
\begin{equation}
  \label{eq:48}
  \Gamma_2(f)\ge
K\,\Gq f+\frac 1N (\Delta f)^2,\quad
\text{where}\quad
\Gamma_2(f):=\frac 12 \Delta \Gq f-\Gbil f{\Delta f}.
\end{equation}
Here $\Gamma$ is the 
\emph{Carr\'e du Champ} representing the energy density
of a strongly local Dirichlet form
\begin{equation}
  \label{eq:15}
  \cE(f,g):=\int_X \Gbil fg\,\d\mm\quad f,\,g\in \dom(\cE)\subset L^2(X,\mm),
\end{equation}
and $\Delta$ is the associated selfadjoint linear operator in the
Lebesgue space
$L^2(X,\mm)$ (see e.g.~\cite{Bouleau-Hirsch91,Fukushima-Oshima-Takeda11}). A fundamental example is of course given by Euclidean measure
spaces endowed with the classical Dirichlet energy.

The more recent approach of \textsc{Lott-Villani} \cite{Lott-Villani09} and \textsc{Sturm} \cite{Sturm06I,Sturm06II}, based on the so-called $\CD(K,N)$ condition, 
makes sense for  metric measure spaces and it is based on convexity inequalities fulfilled by suitable ``entropies'' along geodesics for the Wasserstein distance.
In the case $N<\infty$, also the more recent variant $\CD^*(K,N)$, see \cite{Bacher-Sturm10}, should be considered,
which provides an (a priori) weaker condition when $K\neq 0$.

Since these theories formalize ``local'' conditions (namely the lower bound on the Ricci tensor and the upper bound on the dimension)
with ``nonlocal'' tools, for both theories it is important to ascertain the validity of the so-called Global-to-Local and Local-to-Global properties.
Since this theme has been more investigated on the $\CD(K,N)$ side, we confine our discussion to this theory, although the equivalence 
results that we shall mention later on could be used to read some results also from the $\BE(K,N)$ side. Typically, the Global-to-Local property
requires some strong convexity property (either of the entropy or of the subdomain under consideration), see for instance \cite[Proposition~30.1]{Villani09},
while the Local-to-Global property has been established under the non-branching assumption, first in $\CD(K,\infty)$ spaces \cite{Sturm06I},
then in $\CD(0,N)$ spaces \cite[Theorem~30.37]{Villani09} and eventually in $\CD^*(K,N)$ spaces \cite{Bacher-Sturm10} 
(see also \cite{Cavalletti} for recent progress on the globalization for $\CD(K,N)$). On the other hand, since
the \hbox{non-branching} assumption is unstable under Gromov-Hausdorff convergence, it is desirable to have stronger axiomatizations
of the $\CD(K,N)$ theory which retain stability and Local-to-Global properties and do not involve the non-branching
assumption. Actually, some results of the $\CD(K,N)$ theory initially proved under the \hbox{non-branching} assumption have been recently 
proved by \textsc{Rajala} without making use of this assumption \cite{Rajala12a}, \cite{Rajala12b}. But, a recent remarkable paper by
 the same author \cite{Rajala13} provides for all $K\in\R$ and $N\geq 1$ a (highly branching) compact metric measure space satisfying 
$\CD^*(0,4)=\CD(0,4)$ locally, but not $\CD(K,\infty)$ (and therefore not even $\CD(K,N)$).
  
\paragraph{\em The case of $\RCD^*(K,N)$ spaces.} 
The main goal of this paper is to prove the Local-to-Global property
in the class of locally compact 
\emph{Riemannian} metric measure spaces $\RCD^*(K,N)$
independently of non-branching assumptions. 
Local compactness is in fact always true in the finite
dimensional case $N<\infty$ \cite[Corollary 2.4]{Sturm06II}, so that it is a restrictive assumption
only in the case $N=\infty$. 

The $\RCD$ axiomatization can be obtained from the $\CD$ one by
just adding the requirement that the metric measure structure is infinitesimally Hilbertian (an assumption suggested by \textsc{Cheeger-Colding} in
\cite[Appendix~2]{Cheeger-Colding97I}); formally, this translates into the assumption that the so-called \textsc{Cheeger}'s energy $\C$ is a quadratic form. 

The class of $\RCD(K,\infty)$ has been introduced in \cite{AGS11b} (and then improved in \cite{AGMR12}), 
while its dimensional counterparts
$\RCD^*(K,N)$ have been studied in the more recent papers \cite{Erbar-Kuwada-Sturm13}, \cite{Ambrosio-Mondino-Savare13}.
Since in the $\RCD$ spaces $\C$ can also be viewed as a Dirichlet form, a more precise connection between the $\RCD^*$ and
the $\BE$ sides of the theory is possible and can indeed be established: without entering here in too many technical details, we just mention that
in \cite{AGS11b} it was proved that $\BE(K,\infty)$ holds for $\RCD(K,\infty)$ spaces, while the implication from $\BE(K,\infty)$
to $\RCD(K,\infty)$ has been established in \cite{AGS12} under mild regularity assumptions on the metric measure structure; 
the dimensional counterparts of this equivalence are given in
\cite{Erbar-Kuwada-Sturm13}, \cite{Ambrosio-Mondino-Savare13}.

Using these connections and partitions of unity, we
can read the local $\RCD$ property as a local $\BE$ property and then use partitions of unity to globalize it; eventually
we use the equivalence in the converse direction to obtain the $\RCD$ property globally.

\paragraph{\em Integral formulation of $\BE(K,\infty)$ and gradient estimates.} 
Even if the classical differential formulation \eqref{eq:48} of
the Bakry-\'Emery condition is clearly local,
the weak-integral $\BE(K,N)$ condition introduced
in \cite{AGS12} has a global character: 
the corresponding $\Gamma_2$
tensor  also involves
a test function $\varphi$ in the multilinear form
\begin{equation}
  \label{eq:40}
  \mathbf \Gamma_2(f;\varphi):=\int_X \Big(\frac
  12\Gamma(f)\Delta\varphi-\Gbil f{\Delta f}\varphi\Big)\,\d\mm,
  \quad
  f,\,\Delta f\in \dom(\cE), \ \varphi,\Delta
\varphi\in L^\infty(X,\mm),
\end{equation}
and the resulting $\BE(K,N)$ condition
\begin{equation}
  \label{eq:42}
  \GGamma_2(f;\varphi)\ge \int_X\Big(K\,\Gq f+\frac 1N (\Delta
  f)^2\Big)\varphi\,\d\mm
  \quad\forevery \varphi\ge 0,
\end{equation}
is thus of global type and involves test functions $\varphi$ which
belong to the domain of $\Delta$ in $L^\infty$.
The formulation based on \eqref{eq:40} and \eqref{eq:42} is carefully adapted to 
deal with the lowest regularity
and
summability properties of $f,\,\varphi$, that should be both sufficient
to give sense to the $\GGamma_2$ tensor and invariant
with respect to the action of the Markov semigroup. The latter is a crucial requirement that
is intrinsically global, not satisfied by the
 stronger differential formulation as in \eqref{eq:48} which 
would impose $\Gq f\in \dom(\Delta)$.
In fact, the typical approach requiring
the existence of an algebra of sufficiently smooth functions
where all the relevant computations can be carried on, is quite useful
to deal with many concrete examples but it does not seem to
be well adapted to the non-smooth framework of general metric measure spaces.

Therefore finding useful localizations of $\BE(K,N)$ is not a trivial
issue,
since it involves the summability of $\Delta \varphi$ and the
regularity of $\Delta f$ and of $\Gq f$.
Recall that a product with a test function $\nchi$
affects $\Delta f$ through the Leibniz formula
\begin{equation}
  \label{eq:43}
  \Delta(f\nchi)=\nchi\Delta f+f\Delta\nchi+2\Gbil f\nchi,
\end{equation}
thus showing the importance to secure existence of good classes of cutoff functions 
$\nchi$ with $\nchi,\Gq \nchi,\,\Delta\nchi\in L^\infty(X,\mm)$ 
(a problem addressed in Lemma~\ref{lem:cutoff}) and
general conditions ensuring 
$\Gq f \in \dom (\cE).$ Similar problems arise with the chain rule
\begin{equation}
  \label{eq:44}
  \Delta \eta(f)=\eta'(f)\Delta f+\eta''(f)\Gq f,\quad \eta\in \rmC^2(\R).
\end{equation}
It is therefore natural to investigate 
higher 
integrability and regularity 
properties of $\Gamma(f)$ (see Theorems~\ref{thm:interpolation} and
\ref{thm:crucial}) 
that  are interesting by themselves and will also play a role
in our forthcoming paper \cite{Ambrosio-Mondino-Savare13}.

Improving some results of \cite{Savare13}, in this paper we show that in $\BE(K,\infty)$ spaces
functions $f\in L^2\cap L^\infty(X,\mm)$ with $\Delta f\in L^2(X,\mm)$
satisfy
the extra integrability property for $\Gq f$
\begin{equation}
  \label{eq:45.1}
  \Gq f\in L^2(X,\mm),\quad 
  \int_X \Gq f^2 \,\d\mm\le A_{K} \|f\|_{L^\infty(X,\mm)}^2\int_X \big(f-\Delta
  f\big)^2\,\d\mm.
\end{equation}
In addition, if $f$ and $\Delta f$ belong to $L^4(X,\mm)$ then $\Gq f$ belongs to the domain of
the Dirichlet form $\cE$ and  satisfies
\begin{equation}
  \label{eq:45}
  \Gq f\in \dom(\cE),\quad 
  \int_X \Big(\Gq f^2 +\Gq {\Gq f}\Big)\,\d\mm\le B_{K}\int_X \big(f-\Delta
  f\big)^4\,\d\mm.
\end{equation}
The constants $A_K, \,B_K$ in the previous estimates depend only on $K$.
These properties allow for simpler formulation of \eqref{eq:42} 
and are the starting points for studying its localization, since 
we will show that the same estimates hold even if $X$ is covered 
by a collection of spaces satisfying $\BE(K,\infty)$.

\paragraph{\em Plan of the paper.}
The paper is organized as follows: in Section~\ref{sec:BE} we will work in the general framework of Dirichlet spaces, 
without assuming that the Dirichlet form $\cE$ is induced by the Cheeger energy and 
actually avoiding any reference to a metric structure 
(so that the role of modulus of the weak gradient is played by  $\sqrt{\Gamma(f)}$); more precisely we just assume that $(X,\tau)$ is a Polish topological 
space endowed with a $\sigma$-finite reference Borel measure $\mm$ and a strongly local and symmetric Dirichlet form $\cE$ on $L^2(X,\mm)$ 
enjoying a \emph{Carr\'e du Champ} $\Gamma:D(\cE)\times D(\cE)\to L^1(X,\mm)$ and a
$\Gamma$-calculus  (see e.g.\ \cite[\S~2]{AGS12}).  In this framework, under the $\BE(K,\infty)$ condition,  we establish 
useful higher integrability properties for $\Gamma(f)$
as \eqref{eq:45.1} by an interpolation argument (Section~\ref{sec:interpolation}) 
and the extra-regularity property \eqref{eq:45} (Section~\ref{sec:further-regularity}).
These properties will be used in \cite{Ambrosio-Mondino-Savare13} 
and in the second part of the paper to prove the Local-to-Global property. 
Still in the same framework, in Section~\ref{sec:equiv} we provide equivalent formulations and implications of the $\BE(K,N)$ property that
play a role in this and in the companion paper \cite{Ambrosio-Mondino-Savare13}. 

In the second part, composed by Sections~\ref{sec:LTGComp} and
Section~\ref{sec:RCD}, we will work instead with metric measure spaces
and we will use the previous estimates to prove the Local-to-Global
property. 
In Section~\ref{sec:LTGComp} we discuss basic localization
properties of gradients and Laplacians and show how curvature lower
bounds can be used to obtain existence of cutoff functions with
bounded Laplacian. 

In Section~\ref{sec:RCD} we recall the precise definitions of
$\RCD^*(K,N)$ spaces, the equivalence results with $\BE(K,N)$,
and we carry on the proof of the Local-to-Global property 
in case the space $(X,\sfd,\mm)$ is locally compact.

\smallskip
\noindent {\bf Acknowledgement.} The authors acknowledge the support
of the ERC ADG GeMeThNES and warmly thank Tapio Rajala for his comments
on a preliminary version of this paper.

\section{Notation, preliminaries and the Bakry-\'Emery condition} 
\label{sec:BE}
In this section we will recall the basic assumptions related to the
Bakry-\'Emery condition. 
\paragraph{\em Strongly local Dirichlet forms and $\Gamma$-calculus.}
The natural setting is provided
by a Polish topological space $(X,\tau)$ endowed with
a $\sigma$-finite reference Borel measure $\mm$ with full support
(i.e. $\mathrm{supp}(\mm)=X$) and 
\begin{equation}
 \begin{gathered}
    \text{a strongly local, symmetric Dirichlet form $\cE$ on
      $L^2(X,\mm)$ enjoying}\\
    \text{a \emph{Carr\'e du Champ}
    $\Gamma:\rmD(\cE)\times \rmD(\cE)\to L^1(X,\mm)$ and}\\
  \text{generating
  a mass-preserving Markov semigroup $(\sfP_t)_{t\ge0}$ on $L^2(X,\mm)$,}
\end{gathered}
\label{eq:18}
\end{equation}
(see e.g.\ \cite[\S 2]{AGS12}).
None of the estimates we are discussing in this  section really needs
an underlying compatible metric structure, as the one discussed in 
\cite[\S 3]{AGS12}. 
We refer to \cite[\S 2]{AGS12} for the basic notation and assumptions;
in any case, we will apply all the results to the case of the Cheeger
energy (thus assumed to be quadratic) 
of the metric measure space $(X,\sfd,\mm)$ and we will use the calculus properties of 
the Dirichlet form that are related to the $\Gamma$-formalism.

In the following we call $\V$ the Hilbert space made by
$\dom(\cE)\subset L^2(X,\mm)$ endowed with the scalar product 
$$(f,g)_{\V}:=\int_X \Big(fg+ \Gamma(f,g)\Big) \, \d \mm.$$ 
The Laplace operator $-\Delta:\V\to \V'$
and its perturbation $-\Delta_\lambda$ are respectively defined as
\begin{equation}
  \label{eq:defDelta}
  \langle -\Delta f,g\rangle:=\cE(f,g),\quad
  -\Delta_\lambda f=\lambda f-\Delta f\quad\forevery f,\,g\in \V.
\end{equation}
The operator $\Delta$ is the generator of the Markov semigroup
$(\sfP_t)_{t\ge0}$ and its realization in $L^2(X,\mm)$ is an
unbounded selfadjoint nonnegative operator with domain $\D$.

We will denote by 
\begin{equation}
  \label{eq:38}
  \dom_{L^p}(\Delta):= \big\{f\in \V\cap L^p(X,\mm):\Delta f\in
  L^2\cap L^p(X,\mm)\Big\},\quad p\in [1,\infty],
\end{equation}
its domain as unbounded operator
in $L^p(X,\mm)$, endowed with the norm $\|f\|_\V+\|f-\Delta
f\|_{L^2\cap L^p}$. This choice of the norm is justified by the inequalities 
  \begin{equation}
    \label{eq:39}
    \lambda f-\Delta f=g\in L^p(X,\mm)\quad\Longrightarrow\quad
    \lambda \|f\|_{L^p}\le \|g\|_{L^p},\quad
    \|\Delta f\|_{L^p}\le 2\|g\|_{L^p}=
    2\|\Delta_\lambda f\|_{L^p}
  \end{equation}
for all $\lambda\geq 0$. In turn, the implication \eqref{eq:39} follows by the fact that the resolvents $\lambda
(\lambda-\Delta)^{-1}$, $\lambda>0$, associated to a Dirichlet form
are sub-Markovian (see e.g.~\cite[Def.~4.1 and Thm.~4.4]{Ma-Rockner92})
and therefore contractive in every $L^p(X,\mm)$.

\paragraph{\em The $\Gamma_2$ tensor and the Bakry-\'Emery condition.} We 
introduce the multilinear form $\GGamma_2$ given by
\begin{equation}
\begin{aligned}
  \GGamma_2(f,g;\varphi):=&
  \frac 12\int_X \Big(\Gbil fg\, {\Delta \varphi}-
  \big(\Gbil f{\Delta g}+\Gbil g{\Delta f}\big)\varphi\Big)\,\d\mm
  \quad
  (f,g,\varphi)\in \dom(\mathbf\Gamma_2),
\end{aligned}\label{eq:22}
\end{equation}
where $\dom(\mathbf{\Gamma}_2):=\dom_\V(\Delta)\times \dom_\V(\Delta) \times
  \dom_{L^\infty}(\Delta),$ and 
$\dom_\V(\Delta)=\big\{f\in \V:
  \Delta f\in \V\big\}$.
When $f=g$ we also set
\begin{equation}
  \label{eq:80} 
  \GGamma_2(f;\varphi):=\GGamma_2(f,f;\varphi)=
  \int_X \Big(\frac 12\Gq f\, {\Delta \varphi}-
  \Gbil f{\Delta f}\varphi\Big)\,\d\mm,
\end{equation}
so that 
\begin{equation}
  \label{eq:46}
  \GGamma_2(f,g;\varphi)=  \frac14\GGamma_2(f+g;\varphi)-\frac 14 \GGamma_2(f-g;\varphi).
\end{equation}

The multilinear form $\mathbf\Gamma_2$ provides a weak version 
(inspired by \cite{Bakry06,Bakry-Ledoux06}) of 
the Bakry-\'Emery condition \cite{Bakry-Emery84,Bakry92}. In the sequel, given $f:X\to\R$, we denote by 
$\supp(f)$ the smallest closed set $C\subset X$ such that $f=0$ $\mm$-a.e. in $X\setminus C$; this way, the definition
of support is independent of modifications of $f$ in $\mm$-negligible
sets.

\begin{definition}[Bakry-\'Emery condition]\label{def:BE}
  Let $K\in \R$, $N\in [1,\infty]$, and $\nu:=\frac 1N\in [0,1]$.
  We say that the strongly local Dirichlet form $\cE$ satisfies the
  $\BE(K,N)$
  condition, 
  if it admits a Carr\'e du Champ $\Gamma:\dom(\cE)\times \dom(\cE)\to L^1(X,\mm)$ and
  for every $(f,\varphi)\in \dom_\V(\Delta)\times \dom_{L^\infty}(\Delta)$
  with $\varphi\ge0$ there holds
  \begin{equation}
  \label{eq:9}
  \GGamma_2(f;\varphi)\ge K\int_X \Gq f\,\varphi\,\d\mm+\nu\int_X
  (\Delta f)^2\varphi\,\d\mm.
\end{equation}
  We say that $\cE$ satisfies the $\BE_{{\rm loc}}(K,N)$ condition if \eqref{eq:9} holds for all
  $(f,\varphi)\in \dom_\V(\Delta)\times \dom_{L^\infty}(\Delta)$
  with $\varphi\ge0$ compactly supported. 
\end{definition}
 
 \begin{remark}[On the global character of the $\BE(K,N)$ condition]\label{rem:BEglobal}
 \upshape
 {\rm Notice 
   that $\BE(K,N)$ has a global nature, related to the fact that an integration by parts is understood
 in the weak formulation \eqref{eq:80} of Bochner's inequality; for this reason, even the issue of the Global-to-Local property is delicate in this framework,
 since the passage to a smaller open set $U\subset X$ changes the Dirichlet form and the action of the Laplacian operator
 (unless one deals with functions compactly supported
 in $U$, compare with Remark~\ref{rem:locality}). As a matter of fact, the localization seems to involve some ``metric"
 assumption on $U$, relative to the distance $\sfd_{\cE}$ induced by $\cE$, see Proposition~\ref{prop:localK}(c). For this reason, in the discussion
 of the Local-to-Global and Global-to-Local properties, we will deal
 with metric measure spaces under a metric version of the $\BE(K,N)$ condition
 (see Definition~\ref{def:topo-BE}), although
 the equivalence results of \cite{AGS12}, \cite{Erbar-Kuwada-Sturm13}, \cite{Ambrosio-Mondino-Savare13} could be used
 to translate back the result to the $\BE$ formalism.
 }\fr\end{remark}

\begin{remark}[Pointwise gradient estimates for the heat flow under $\BE(K,\infty)$]
  \upshape
  When $N=\infty$, condition \eqref{eq:9} is in fact equivalent (see \cite[Corollary~2.3]{AGS12}) to either of the following
  pointwise gradient estimates
  \begin{equation}
    \label{eq:77}
    \Gq{\sfP_t f}\le \rme^{-2K t}\,\sfP_t\big(\Gq f\big)\quad
    \text{$\mm$-a.e.\ in $X$, for every }f\in \V,
  \end{equation}
  \begin{equation}
    \label{eq:77bis}
    2\rmI_{2K}(t) \Gq{\sfP_t f}\le \sfP_t{f^2}-\big(\sfP_t f\big)^2
    \quad \mm\text{-a.e.\
      in }X,\quad
    \forevery
    t>0,\ f\in L^2(X,\mm),
  \end{equation}
  where $\rmI_K$ denotes the real function
  \begin{displaymath}
    \rmI_{K}(t):=\int_0^t \rme^{K r}\,\d r=
    \begin{cases}
      \frac 1K(\rme^{K t}-1)&\text{if }K\neq 0,\\
      t&\text{if }K=0.  \qquad\blacksquare
    \end{cases}   
  \end{displaymath}
 \end{remark}

\section{Interpolation estimates: extra integrability of $\Gamma(f)$ }
\label{sec:interpolation}
Let us now consider the semigroup $(\sfP^\lambda_t)_{t\ge0}$
generated by the
operator $\Delta_\lambda:=\Delta-\lambda$,
$\lambda\ge0$,
\begin{equation}
  \label{eq:78}
  \sfP^\lambda_t f:=\exp(t\Delta_\lambda)f=\rme^{-\lambda t}\exp(t\Delta)f=
  \rme^{-\lambda t}\,\sfP_t f.
\end{equation}
Since $\sqrt{\rmI_{2K}(t)}\ge \sqrt t \,\rme^{Kt}\quad\text{if }K\le 0$,
choosing $\lambda\ge K_-$ and $p\ge 2$, $\BE(K,\infty)$ and the contractivity of $\sfP_t$ in $L^p$ yield by \eqref{eq:77bis}
\begin{equation}
  \label{eq:79}
  \big\|\Gq {\sfP^\lambda_t f}^{1/2}\big\|_{L^{p}(X,\mm)}\le \frac 1{\sqrt
    {2t}}\|f\|_{L^p(X,\mm)}\quad \forevery f\in L^p(X,\mm),\,\,t>0.
\end{equation}
We prove now a useful estimate for $\Gq f$ in 
$L^p(X,\mm)$  when $\Delta f\in L^p(X,\mm)$ 
and $f$ is bounded. For the sake of simplicity, we will only focus
on the cases $p=2$ and $p=\infty$, 
that will also play a role in \cite{Ambrosio-Mondino-Savare13}.
Analogous results (in $X=\R^d$)
when only a one-sided bound on $f$ is available have been proved with
completely different proofs in
\cite{Lions-Villani95,Gianazza-Savare-Toscani09}.

\begin{theorem}[Gradient interpolation]
  \label{thm:interpolation}
  Assume $\BE(K,\infty)$, let $\lambda\ge K_-$,  $p\in \{2,\infty\}$,
  and $f\in L^2\cap L^\infty(X,\mm)$ with $\Delta f\in
  L^p(X,\mm)$.
  Then $\Gq f\in L^{p}(X,\mm)$ 
  and
  \begin{equation}
    \label{eq:87}
    \big\|\Gq f\|_{L^p(X,\mm)}\le
    c\|f\|_{L^\infty(X,\mm)}\,\|\Delta_\lambda f\|_{L^p(X,\mm)}
  \end{equation}
  for a universal constant $c$ independent of $\lambda,X,\mm,f$
  ($c=\sqrt{2\pi}$ when $p=\infty$).
  
  Moreover, if $f_n\in \Dinfty$ with $\sup_n \|f_n\|_{L^\infty(X,\mm)}<\infty$ and 
  $f_n\to f$ strongly in $\D$, then $\Gq {f_n}\to \Gq f$ 
  and $\Gq {f_n-f}\to0$ strongly in $L^2(X,\mm)$.
\end{theorem}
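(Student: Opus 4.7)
The strategy rests on the mild representation
\[
f = \sfP_T^\lambda f + \int_0^T \sfP_t^\lambda g\,\d t,\qquad g:=-\Delta_\lambda f,
\]
valid for every $T>0$ (obtained by integrating $-\pdt \sfP_t^\lambda f=\sfP_t^\lambda g$ on $[0,T]$), coupled with the pointwise Cauchy--Schwarz $|\Gbil uv|\le\sqrt{\Gq u}\sqrt{\Gq v}$ (giving sub-additivity of $\sqrt{\Gq\cdot}$) and Minkowski's integral inequality. This yields
\[
\sqrt{\Gq f}\le \sqrt{\Gq{\sfP_T^\lambda f}} + \int_0^T \sqrt{\Gq{\sfP_t^\lambda g}}\,\d t\qquad\mm\text{-a.e.\ in }X.
\]
The central input from $\BE(K,\infty)$ is the pointwise estimate
\[
\Gq{\sfP_t^\lambda h}\le \frac{1}{2t}\bigl[\sfP_t(h^2)-(\sfP_t h)^2\bigr]\le \frac{\sfP_t(h^2)}{2t},
\]
valid for $\lambda\ge K_-$ and every $h\in L^2(X,\mm)$; it follows from \eqref{eq:77bis} combined with $2\rmI_{2K}(t)\ge 2te^{2Kt}$ and $e^{-2(\lambda+K)t}\le 1$. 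Two consequences will be used: the pointwise estimate $\sqrt{\Gq{\sfP_t^\lambda h}}\le \|h\|_{L^\infty}/\sqrt{2t}$ when $h\in L^\infty$, and the integrated $L^2$-estimate $\|\sqrt{\Gq{\sfP_t^\lambda h}}\|_{L^2}\le \|h\|_{L^2}/\sqrt{2t}$ by mass-preservation of $\sfP_t$.

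\paragraph{Case $p=\infty$.}
Both summands are controlled pointwise,
\[
\|\sqrt{\Gq f}\|_{L^\infty}\le \frac{\|f\|_{L^\infty}}{\sqrt{2T}}+\sqrt{2T}\,\|g\|_{L^\infty},
\]
and AM--GM optimization in $T$ (at $2T=\|f\|_{L^\infty}/\|g\|_{L^\infty}$) yields $\|\Gq f\|_{L^\infty}\le 4\,\|f\|_{L^\infty}\|g\|_{L^\infty}$. The sharper constant $\sqrt{2\pi}$ claimed in the theorem arises by replacing the sharp cutoff at $T$ with an optimal smooth (Gaussian-type) weighting in the representation of $f$.

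\paragraph{Case $p=2$.}
Here the target is an $L^2$-bound on $\Gq f$ itself, not on $\sqrt{\Gq f}$. Setting $A:=\sqrt{\Gq{\sfP_T^\lambda f}}$ and $B:=\int_0^T\sqrt{\Gq{\sfP_t^\lambda g}}\,\d t$ and squaring the pointwise inequality $\sqrt{\Gq f}\le A+B$ gives $\Gq f\le 2A^2+2B^2$. The uniform pointwise bound $A\le \|f\|_{L^\infty}/\sqrt{2T}$ converts any $L^1$-control on $A^2$ into $L^2$-control via the interpolation $\|A^2\|_{L^2}^2\le\|A^2\|_{L^\infty}\|A^2\|_{L^1}$, and an analogous strategy handles $B^2$ by using both the pointwise ($L^\infty$) and integrated ($L^2$) versions of the gradient estimate together with Minkowski. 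Optimization in $T$ then produces the stated inequality. I expect the main technical obstacle in this case to be carrying through this delicate balance of pointwise and integrated norms while keeping the final constant universal in $\lambda$.

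\paragraph{Continuity statement.}
Applying the $L^2$-bound just established to $h_n:=f_n-f$: the hypotheses $\sup_n\|f_n\|_{L^\infty}<\infty$ and $f_n\to f$ in $\D$ yield $\sup_n\|h_n\|_{L^\infty}<\infty$ (via $L^2$-convergence and a subsequence a.e.) and $\|\Delta_\lambda h_n\|_{L^2}\to 0$, whence $\|\Gq{f_n-f}\|_{L^2}\to 0$. The convergence $\Gq{f_n}\to\Gq f$ in $L^2$ then follows from the bilinearity identity $\Gq{f_n}-\Gq f=\Gbil{f_n-f}{f_n+f}$ combined with the pointwise Cauchy--Schwarz $|\Gbil uv|\le\sqrt{\Gq u\,\Gq v}$ and H\"older's inequality:
\[
\|\Gq{f_n}-\Gq f\|_{L^2}\le \|\Gq{f_n-f}\|_{L^2}^{1/2}\,\|\Gq{f_n+f}\|_{L^2}^{1/2}\to 0,
\]
where the second factor is uniformly bounded in $n$ by applying the main estimate to $f_n+f$.
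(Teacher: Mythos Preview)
Your treatment of the case $p=\infty$ and of the continuity statement is correct and essentially parallel to the paper (the paper uses the resolvent identity $f=\int_0^\infty e^{-s}\sfP^\lambda_{ts}(f-t\Delta_\lambda f)\,\d s$ rather than your Duhamel-type splitting, but the mechanism is the same: the pointwise gradient bound \eqref{eq:79} plus optimization in a time parameter).

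The case $p=2$, however, has a genuine gap. Your plan is to bound $\|B^2\|_{L^2}$ by interpolating between a pointwise ($L^\infty$) and an integrated ($L^2$) bound on $\sqrt{\Gq{\sfP_t^\lambda g}}$, where $g=-\Delta_\lambda f$. But under the hypotheses for $p=2$ you only know $g\in L^2(X,\mm)$; there is no $L^\infty$ control on $g$, hence no pointwise bound on $\sqrt{\Gq{\sfP_t^\lambda g}}$, and the interpolation $\|B^2\|_{L^2}^2\le\|B^2\|_{L^\infty}\|B^2\|_{L^1}$ is unavailable. Attempting instead a direct $L^4$ bound via Minkowski, $\|B\|_{L^4}\le\int_0^T\|\sqrt{\Gq{\sfP_t^\lambda g}}\|_{L^4}\,\d t$, leads through $\Gq{\sfP_t^\lambda g}\le \sfP_t(g^2)/(2t)$ to $\|g\|_{L^4}$, which is again not assumed. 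In short, the only place where the crucial $L^\infty$ information on $f$ enters your decomposition is through the term $A=\sqrt{\Gq{\sfP_T^\lambda f}}$, and that single snapshot at time $T$ is not enough to upgrade the $L^2$ information on $g$ to the needed $L^4$ control on $\sqrt{\Gq f}$.

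The paper circumvents this by a completely different argument: it does \emph{not} decompose $f$, but studies the whole curve $t\mapsto g_t:=\sqrt{\Gq{\sfP_t^\lambda f}}$. The $L^\infty$ bound on $f$ gives $\|g_t\|_{L^\infty}\le \|f\|_{L^\infty}/\sqrt{2t}$ for every $t>0$, while an energy computation (using $|\partial_t g_t|^2\le \Gq{\partial_t\sfP_t^\lambda f}$ and commuting $\partial_t$ with $\Delta$) yields $\int_0^\infty\|\partial_t g_t\|_{L^2}^2\,\d t\le\tfrac12\|\Delta_\lambda f\|_{L^2}^2$. These two estimates place $g_t$ in a weighted Lions trace space, and the trace theorem gives $g_0=\sqrt{\Gq f}\in (L^\infty,L^2)_{1/2,4}\subset L^4$, i.e.\ $\Gq f\in L^2$, with the claimed product bound. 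The key conceptual difference is that the paper exploits the $L^\infty$ bound on $f$ along the \emph{entire} semigroup trajectory, not just at one time, and combines it with $L^2$ control on the \emph{time derivative} of $g_t$ rather than on $g_t$ itself; this is exactly what real interpolation is designed to handle, and it is not reproduced by your splitting.
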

\begin{proof}
  Let us first consider the case $p=\infty$ (here we follow the argument
  of \cite[Prop.~3.6]{Coulhon-Sikara10}):
  recalling the identity
  \begin{displaymath}
    %f-t\Delta_\lambda f=f_t\quad\Rightarrow\quad
    f=\int_0^\infty \rme^{-s}\,
    \sfR_{ts} (f-tAf)\,\d s
  \end{displaymath}
  valid for nonnegative selfadjoint semigroups $\sfR$ with infinitesimal generator $A$,
  by applying \eqref{eq:79} with $p=\infty$ to $f_t:=f-t\Delta_\lambda f$ we get
  \begin{align*}
    \Big\|\Gq{ f}^{1/2}\Big\|_{L^\infty(X,\mm)}
    &\le \|f_t\|_{L^\infty(X,\mm)}\,\int_0^\infty \rme^{-s}(2
    ts)^{-1/2}\,\d s=
    \sqrt{\frac \pi {2t}}\,\|f_t\|_{L^\infty(X,\mm)}
    \\&\le 
    \sqrt{\frac \pi {2t}}\,\Big(\|f\|_{L^\infty(X,\mm)}+t
    \|\Delta_\lambda f\|_{L^\infty(X,\mm)}\Big).
  \end{align*}
  Choosing $t=\|f\|_\infty\,\|\Delta_\lambda f\|_\infty ^{-1}$ 
  we obtain \eqref{eq:87}.

 In order to prove the formula \eqref{eq:79} in the case $p=2$,
  %, we set $g_t:=\Gq {\sfP^\lambda_t f }^{1/2}$, 
  %so that \eqref{eq:79} reads as 
%\begin{equation}
 % \label{eq:79bis}
  %\|g_t\|_{L^\infty(X,\mm)}\le \frac 1{
   %\sqrt {2t}}\|f\|_{L^\infty(X,\mm)}.
%\end{equation}
%
by an elementary approximation, suffices to show the inequality under the additional assumption
$f\in\V$, $\Delta f\in\V$. We use the Leibniz formula
$$
\frac{\d}{\d t}\Gq {\sfP^\lambda_t f }=2\Gbil{\sfP^\lambda_t f}{\frac{\d}{\dt}\sfP^\lambda_t f}
$$
to get
\begin{equation}\label{eq:11sep}
\left|\frac{\d}{\d t} \Gq {\sfP^\lambda_t f }^{1/2}\right|^2 = \frac{\left(\Gamma(\sfP^\lambda_t f, \frac {\d}{\d t} \sfP^\lambda_t f )\right)^2}
{\Gamma(\sfP^\lambda_t f)} \leq \Gq{\frac \d{\dt} \sfP^\lambda_t f}.
\end{equation}
Commuting $\Delta$ with $\frac{\d}{\dt}$, using the
identity $\frac\d{\d t} \sfP^\lambda_t f=\Delta_\lambda \sfP^\lambda_t f$ together with the fact that
$\lambda\geq 0$ we get
\begin{eqnarray}
  \label{eq:85}
  \int_0^\infty\int_X \Gq{\frac \d{\dt} \sfP^\lambda_t f}\,\d\mm\,\dt&=&
  -\int_0^\infty\int_X(\frac \d{\dt}\sfP^\lambda_t f)(\frac \d{\dt}\Delta\sfP^\lambda_t f)\,\d\mm\,\dt\\&=&
  -\int_0^\infty\int_X(\frac \d{\dt}\sfP^\lambda_t f)(\frac \d{\dt}\Delta_\lambda\sfP^\lambda_t f)\,\d\mm\,\dt-
  \lambda\int_0^\infty\int_X\bigl(\frac \d{\dt}\sfP^\lambda_t f\bigr)^2\,\d\mm\,\dt\nonumber\\&\leq&
  -\frac 12\int_0^\infty\int_X\frac \d{\dt}|\Delta_\lambda\sfP^\lambda_t f|^2\,\d\mm\,\dt
  \leq \frac 12 \int_X \big(\Delta_\lambda f\big)^2\,\d\mm.\nonumber
\end{eqnarray}
Setting $g_t:=\Gq {\sfP^\lambda_t f }^{1/2}$, inserting \eqref{eq:11sep} into \eqref{eq:85} it follows that
\begin{equation}
  \label{eq:86}
  \int_0^\infty \|t^{1/2}\frac \d{\dt} g_t\|_{L^2(X,\mm)}^2 \,\frac{\d t}t\leq
  \frac 12\|\Delta_\lambda f\|_{L^2(X,\mm)}^2.
\end{equation}

According to the J.L.~Lions Trace interpolation method
(here we follow the notation of \cite[1.8.1]{Triebel95}),
the estimates \eqref{eq:79} and 
\eqref{eq:86} show that 
$g_t$ belongs to the weighted functional space
$V_1(\infty,\frac 12,L^\infty(X,\mm);2,1/2,L^2(X,\mm))$, 
so that its trace at $t=0$ belongs to 
the $K$-interpolation space
\begin{displaymath}
  g_0\in (L^\infty(X,\mm),L^2(X,\mm))_{\theta,p}\quad\text{with}\quad
  \theta=\frac12,\quad
  p=4,
\end{displaymath}
with
\begin{displaymath}
  \|g_0\|_{(L^\infty(X,\mm),L^2(X,\mm))_{\theta,p}}\le c
  \|f\|_{L^\infty(X,\mm)}^{1/2}
  \|\Delta_\lambda f\|_{L^2(X,\mm)}^{1/2}.
\end{displaymath}
Since $g_0=\Gq f^{1/2}$ we get \eqref{eq:87} also in the case $p=2$.

Let us now consider the last statement. The fact that $\Gq{f_n-f}\to 0$ strongly in $L^2(X,\mm)$ 
follows immediately from the interpolation inequality \eqref{eq:87} by replacing
$f$ with $f_n-f$ and observing that $\Delta_\lambda(f_n-f)\to0$ strongly in $L^2(X,\mm)$ 
since $f_n\to f$ in $\D$. Recalling that 
$$\big|\Gq{f_n}-\Gq f\big|^2=
\big|\Gbil{f_n-f}{f_n+f}\big|^2\le 
\Gq{f_n-f}\Gq{f_n+f}$$
we also get $\Gq{f_n}\to\Gq{f}$ strongly in $L^2(X,\mm)$.
\end{proof}

\section{Equivalent formulations of $\BE(K,N)$}
\label{sec:equiv}
Let $f\in \dom_\V(\Delta)$ and $\varphi\in \dom_{L^\infty}(\Delta)$ and let us consider the expression
\eqref{eq:80} of $\GGamma_2(f;\varphi)$; under the additional assumption $f\in \dom_{L^\infty}(\Delta)$,
by ``integrating by parts" the term $\Gbil f{\Delta f}$ 
it is possible to write $\GGamma_2(f;\varphi)$ in a different form:
\begin{lemma}\label{lem:GGamma2tilde}
  If $f\in \dom_\V(\Delta)\cap \dom_{L^\infty}(\Delta)$ and $\varphi\in \dom_{L^\infty}(\Delta)$ 
  then
  \begin{equation}
    \label{eq:30}
    \GGamma_2(f;\varphi)=\int_X \Big(\frac12 \Gq f\Delta\varphi+
    \Delta f\,\Gbil f\varphi+\varphi(\Delta f)^2\Big)\,\d\mm.
  \end{equation}
\end{lemma}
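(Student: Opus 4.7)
The plan is to compare the claimed expression with the definition \eqref{eq:80} of $\GGamma_2(f;\varphi)$: the first term $\tfrac12\Gq f\,\Delta\varphi$ is already present on both sides, so the work reduces to establishing the identity
\begin{equation*}
-\int_X \Gbil f{\Delta f}\,\varphi\,\d\mm
=\int_X \Delta f\,\Gbil f\varphi\,\d\mm+\int_X \varphi(\Delta f)^2\,\d\mm,
\end{equation*}
which is nothing but an integration-by-parts (in the Dirichlet-form sense) applied after a Leibniz expansion. This is where the extra assumption $f\in\dom_{L^\infty}(\Delta)$ is needed, because it ensures that $\varphi\,\Delta f$ is an admissible test function in the form $\cE(f,\cdot)$.

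The first step is to apply the Leibniz rule for the Carr\'e du Champ,
\begin{equation*}
\Gbil f{\varphi\,\Delta f}=\varphi\,\Gbil f{\Delta f}+\Delta f\,\Gbil f{\varphi},
\end{equation*}
which turns the sum of the two problematic boundary-type terms into the single quantity $\int_X \Gbil f{\varphi\,\Delta f}\,\d\mm$. The second step is to verify that $\varphi\,\Delta f\in\V$: since $\varphi\in \dom_{L^\infty}(\Delta)\subset \V\cap L^\infty$ and $\Delta f\in \V\cap L^\infty$ by hypothesis, both factors are bounded and have $\Gamma$ in $L^1$, so the standard Leibniz calculus for $\Gamma$ (together with $\varphi\,\Delta f\in L^2$, which follows from $\Delta f\in L^2$ and $\varphi\in L^\infty$) shows $\varphi\,\Delta f\in\V$ with $\Gq{\varphi\,\Delta f}\le 2\varphi^2\,\Gq{\Delta f}+2(\Delta f)^2\,\Gq\varphi$.

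The third step is the integration by parts: since $f\in\dom_{L^2}(\Delta)$ and $\varphi\,\Delta f\in\V$, the defining identity \eqref{eq:defDelta} of $\Delta$ gives
\begin{equation*}
\int_X \Gbil f{\varphi\,\Delta f}\,\d\mm=\cE(f,\varphi\,\Delta f)=-\int_X (\Delta f)\,(\varphi\,\Delta f)\,\d\mm=-\int_X \varphi(\Delta f)^2\,\d\mm.
\end{equation*}
Combining this with the Leibniz expansion from the first step and substituting into \eqref{eq:80} yields precisely \eqref{eq:30}.

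I don't expect any real obstacle here: once one notices the Leibniz rearrangement, the whole argument is a one-line integration by parts. The only subtle point is checking the admissibility $\varphi\,\Delta f\in\V$, and this is exactly the reason that $f$ has to be assumed both in $\dom_\V(\Delta)$ (so that $\Gbil f{\Delta f}$ is meaningful as an $L^1$ function and $\Gq{\Delta f}\in L^1$) and in $\dom_{L^\infty}(\Delta)$ (so that $\Delta f\in L^\infty$, making the pointwise product with $\varphi$ well-controlled).
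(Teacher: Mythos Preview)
Your proof is correct and is essentially the same as the paper's: both use the Leibniz identity $\Gbil f{\varphi\,\Delta f}=\varphi\,\Gbil f{\Delta f}+\Delta f\,\Gbil f{\varphi}$ (the paper writes it as $\Gbil fg\varphi=\Gbil f{g\varphi}-\Gbil f\varphi\,g$ with $g=\Delta f$) followed by the integration by parts $\cE(f,\varphi\,\Delta f)=-\int_X(\Delta f)(\varphi\,\Delta f)\,\d\mm$. Your more explicit verification that $\varphi\,\Delta f\in\V$ is a welcome addition; the paper simply notes $g=\Delta f\in\V\cap L^\infty$ and $\varphi\in\V\cap L^\infty$ and leaves the rest implicit.
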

\begin{proof}
  Starting from the Leibniz formula
  \begin{displaymath}
    \Gbil f g\varphi=\Gbil f{g\varphi}-\Gbil f\varphi g\quad
    \forevery f\in \V,\ g,\varphi\in \Vinfty,
  \end{displaymath}
  if $f\in \D$ one has by integration
  \begin{displaymath}
    \int_X \Gbil f g\varphi\,\d\mm=-\int_X  \Big( g\varphi\,\Delta f +\Gbil f\varphi g\Big)\,\d\mm.
  \end{displaymath}
  Choosing $g=\Delta f\in \Vinfty$ we immediately see that 
  \eqref{eq:80} yields \eqref{eq:30}.
\end{proof}
Recalling the polarization identity \eqref{eq:46},
if $f,\,g\in  \dom_\V(\Delta)\cap \dom_{L^\infty}(\Delta)$ and  $\varphi\in \dom_{L^\infty}(\Delta)$, 
from \eqref{eq:30} we also get
\begin{equation}
  \label{eq:49}
  \GGamma_2(f,g;\varphi)=\frac 12 \int_X \Big(\Gbil fg\Delta\varphi+
  \Delta f\,\Gbil g\varphi+
  \Delta g\,\Gbil f\varphi+2\varphi \, \Delta f\,\Delta g\Big)\,\d\mm.
\end{equation}

In the passage from \eqref{eq:22} to \eqref{eq:30} (or, equivalently, \eqref{eq:49}) we used the additional regularity
assumption $f\in \dom_{L^\infty}(\Delta)$; therefore the following approximation result will be useful in the verification of the 
$\BE(K,N)$ property.

\begin{lemma}[Approximation of $\dom_\V(\Delta)$ by $\dom_\V(\Delta) \cap   \dom_{L^\infty}(\Delta)$]\label{lem:Approximation}
Let $f \in \dom_\V(\Delta)$. Then there exist $f_n\in  \dom_\V(\Delta) \cap   \dom_{L^\infty}(\Delta)$ such that 
$$f_n \to f \text{ in } \V \quad\text{and}\quad \Delta f_n \to \Delta f \text{ in } \V. $$ 
In particular, $\BE(K,N)$ holds if and only if \eqref{eq:9} holds for all $f\in  \dom_\V(\Delta) \cap   \dom_{L^\infty}(\Delta)$
and all $\varphi\in \dom_{L^\infty}(\Delta)$ nonnegative.
\end{lemma}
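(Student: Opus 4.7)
My strategy is a resolvent-regularization of a Lipschitz truncation. Set $g:=f-\Delta f$; since $\V$ is a vector space and $f,\Delta f\in\V$ by hypothesis, $g\in\V\cap L^2(X,\mm)$. Define the truncation $T_n(x):=\max(-n,\min(x,n))$, which is $1$-Lipschitz with $T_n(0)=0$ and $T_n'=\mathbf 1_{(-n,n)}$ a.e., and set $g_n:=T_n(g)$. The Lipschitz chain rule for the Carr\'e du Champ gives $g_n\in\V\cap L^\infty(X,\mm)$ with $\|g_n\|_{L^\infty}\le n$ and $\Gq{g_n-g}=\mathbf 1_{\{|g|\ge n\}}\Gq g$; dominated convergence (using $\Gq g\in L^1(X,\mm)$ and $|g|^2\in L^1(X,\mm)$) yields $g_n\to g$ both in $\V$ and in $L^2(X,\mm)$. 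Now define $f_n:=(I-\Delta)^{-1}g_n$, so $f_n\in\D$ and $\Delta f_n=f_n-g_n$. The sub-Markovianity of the resolvent (cf.\ \eqref{eq:39} with $\lambda=1$) gives $\|f_n\|_{L^\infty}\le\|g_n\|_{L^\infty}\le n$, whence $\Delta f_n=f_n-g_n\in L^2\cap L^\infty(X,\mm)$; moreover $f_n,g_n\in\V$ imply $\Delta f_n\in\V$. Hence $f_n\in\dom_\V(\Delta)\cap\dom_{L^\infty}(\Delta)$.

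\textbf{Convergence of the approximants.} Testing $(I-\Delta)f_n=g_n$ against $f_n$ produces $\|f_n\|_\V^2\le\|g_n\|_{L^2}\|f_n\|_{L^2}\le\|g_n\|_{L^2}^2$, so that $(I-\Delta)^{-1}$ is continuous from $L^2(X,\mm)$ into $\V$; applied to $g_n-g$ this gives $f_n\to f$ in $\V$. From the identity $\Delta f_n-\Delta f=(f_n-f)-(g_n-g)$ and the fact that both summands go to zero in $\V$, we conclude $\Delta f_n\to\Delta f$ in $\V$, proving the first assertion.

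\textbf{Passage to the limit for the ``in particular'' clause.} Assume \eqref{eq:9} holds on $(\dom_\V(\Delta)\cap\dom_{L^\infty}(\Delta))\times\dom_{L^\infty}(\Delta)$ for $\varphi\ge 0$. Given $f\in\dom_\V(\Delta)$ and $\varphi\in\dom_{L^\infty}(\Delta)$ with $\varphi\ge 0$, apply the inequality to $(f_n,\varphi)$ and pass to the limit using \eqref{eq:80}. The bilinearity identities
\[
\Gq{f_n}-\Gq f=\Gbil{f_n-f}{f_n+f},\qquad \Gbil{f_n}{\Delta f_n}-\Gbil f{\Delta f}=\Gbil{f_n-f}{\Delta f_n}+\Gbil f{\Delta f_n-\Delta f},
\]
together with Cauchy--Schwarz and the $\V$-boundedness of $\{f_n\}$ and $\{\Delta f_n\}$, produce $L^1(X,\mm)$-convergence $\Gq{f_n}\to\Gq f$ and $\Gbil{f_n}{\Delta f_n}\to\Gbil f{\Delta f}$; analogously $(\Delta f_n)^2\to(\Delta f)^2$ in $L^1(X,\mm)$ from the $L^2$-convergence of $\Delta f_n$. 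Multiplication by the $L^\infty(X,\mm)$ functions $\Delta\varphi$ and $\varphi$ preserves convergence of the integrals, so \eqref{eq:9} passes to the limit. No step is technically delicate once the construction is chosen: the point worth highlighting is the introduction of the intermediate variable $g=f-\Delta f$, which allows $L^\infty$-truncation while preserving membership in $\V$, so that the reconstruction $f_n=(I-\Delta)^{-1}g_n$ transfers the boundedness to both $f_n$ and $\Delta f_n$ through the algebraic identity $\Delta f_n=f_n-g_n$.
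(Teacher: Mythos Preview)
Your proof is correct and follows essentially the same route as the paper: introduce $g=f-\Delta f\in\V$, truncate to $g_n\in\V\cap L^\infty$, set $f_n=(I-\Delta)^{-1}g_n$, and read off $\Delta f_n=f_n-g_n\in\V\cap L^\infty$ from the resolvent bound. The only cosmetic differences are that the paper justifies $\|f_n\|_{L^\infty}\le n$ via the Stampacchia truncation argument rather than invoking sub-Markovianity of the resolvent as you do, and that you spell out the limit passage in \eqref{eq:9} in more detail than the paper.
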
 

\begin{proof}
Let $f \in \dom_\V(\Delta)$ and define $h\in \V$  by $h:= f-\Delta f$. Consider the truncated functions 
\begin{equation}
h_n:=\max\{\min\{h,n \}, -n\}.\label{eq:6}
\end{equation}

Clearly $h_n \in \V\cap L^\infty(X,\mm)$ and $h_n \to h$ in $\V$. Let $f_n\in \V$ be the unique (weak)  solution of
\begin{equation}\label{eq:fn}
f_n-\Delta f_n=h_n. 
\end{equation}
Of course $f_n\to f \in \V$ and  the variational maximum principle
implies
that $|f_n|\leq |h_n|{\GGG\le} n$ $\mm$-a.e. in $X$; let us briefly recall the 
argument, well known in literature as \emph{Stampacchia's truncation}. The  solution $f_n$ of \eqref{eq:fn} is the unique minimum point of 
the strictly convex functional 
$$\V\ni g\mapsto \frac{1}{2}\int_X |g-h_n|^2 \, \d\mm + \frac{1}{2}\cE(g,g);$$
replacing $f_n$ by the truncated function $\tilde{f}_n:= \max\{\min\{f_n,n \}, -n\}\in\V$ neither of the integrals above increase. 
It follows that $f_n=\tilde{f}_n$ $\mm$-a.e.  in $X$, as desired.

We conclude by observing that, since $f_n$ belong to  $\V\cap L^\infty(X,\mm)$, we have  $\Delta f_n= f_n-h_n$ belong to $\V\cap L^\infty(X,\mm)$ as well and, 
since $h_n\to h$ and $f_n\to f$ in $\V$, we get $\Delta f_n \to \Delta f$ in $\V$.  
\end{proof}

Thanks to the improved integrability of $\Gamma$, provided by the $\BE(K,\infty)$ condition, we can now somehow extend the domain of $\GGamma_2(f,g;\varphi)$
to $\Big(\Dinfty\Big)^3$, 
i.e. neither requiring $\Delta f$, $\Delta g$ to be in $\V$ nor requiring $\Delta\varphi$ to be in $L^\infty(X,\mm)$.

\begin{corollary}
  \label{cor:weaker-assumption-G2} If $\BE(K,\infty)$ holds then 
  the right hand side of 
  \eqref{eq:49} makes sense in the space $\Big(\Dinfty\Big)^3$.
  In addition, if $\BE(K,N)$ holds then
  \begin{equation}\label{eq:Reading}
    \int_X \Big(\frac12 \Gq f\Delta\varphi+
    \Delta f\,\Gbil f\varphi+\varphi(\Delta f)^2\Big)\,\d\mm\geq
    K\int_X \Gq {f}\,\varphi\,\d\mm+\nu\int_X (\Delta f)^2\varphi\,\d\mm
  \end{equation}
  is satisfied by every choice of $f,\,\varphi\in \Dinfty$ with $\varphi\ge0$.
\end{corollary}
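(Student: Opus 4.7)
Under $\BE(K,\infty)$ the estimate \eqref{eq:87} of Theorem~\ref{thm:interpolation} (applied with $p=2$) gives $\Gq u \in L^2(X,\mm)$ for every $u\in \Dinfty$. Polarization and Cauchy--Schwarz then yield $\Gbil{u_1}{u_2}\in L^2(X,\mm)$ whenever $u_1,u_2\in \Dinfty$. Consequently, for $f,g,\varphi\in \Dinfty$, each term on the right-hand side of \eqref{eq:49} is an admissible product of two $L^2$-functions, or of an $L^\infty$-function with an $L^1$-function; the same applies to the integrands of \eqref{eq:Reading}.

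\textbf{Reduction to the regular case.} To establish \eqref{eq:Reading} for $f,\varphi\in\Dinfty$ with $\varphi\ge 0$, I would use the resolvent regularization
\[
f_\eps:=(I-\eps\Delta)^{-1}f,\qquad \varphi_\eps:=(I-\eps\Delta)^{-1}\varphi,\qquad \eps>0.
\]
Since the resolvents of a Dirichlet form are sub-Markovian, $\|f_\eps\|_{L^\infty}\le\|f\|_{L^\infty}$, $\|\varphi_\eps\|_{L^\infty}\le\|\varphi\|_{L^\infty}$, and $\varphi_\eps\ge 0$ by positivity preservation. Rewriting $u_\eps - \eps\Delta u_\eps=u$ as $\Delta u_\eps=(u_\eps - u)/\eps$, one reads that $u\in \V\cap L^\infty(X,\mm)$ forces $\Delta u_\eps\in \V\cap L^\infty(X,\mm)$; hence $f_\eps,\varphi_\eps\in \dom_\V(\Delta)\cap \dom_{L^\infty}(\Delta)$. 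Lemma~\ref{lem:GGamma2tilde} then applies, and combined with $\BE(K,N)$ (Definition~\ref{def:BE}) for the pair $(f_\eps,\varphi_\eps)$ it yields \eqref{eq:Reading} with $f,\varphi$ replaced by $f_\eps,\varphi_\eps$.

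\textbf{Passing to the limit.} Standard resolvent theory gives $f_\eps\to f$ and $\varphi_\eps\to\varphi$ strongly in $\D$, so in particular $\Delta f_\eps\to\Delta f$ and $\Delta\varphi_\eps\to\Delta\varphi$ in $L^2(X,\mm)$. The uniform bound $\sup_\eps\|f_\eps\|_{L^\infty}\le\|f\|_{L^\infty}$ allows to invoke the last assertion of Theorem~\ref{thm:interpolation}, yielding $\Gq{f_\eps}\to \Gq f$ strongly in $L^2(X,\mm)$; applying the same statement to $f_\eps\pm\varphi_\eps$ and using the polarization identity \eqref{eq:46}, one also gets $\Gbil{f_\eps}{\varphi_\eps}\to\Gbil f\varphi$ in $L^2(X,\mm)$. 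For the cubic term, $\varphi_\eps(\Delta f_\eps)^2\to \varphi(\Delta f)^2$ in $L^1(X,\mm)$, combining the uniform $L^\infty$-bound on $\varphi_\eps$, the $L^2$-convergence of $\Delta f_\eps$, and dominated convergence along an $\mm$-a.e.\ converging subsequence of $\varphi_\eps$. These convergences let us pass to the limit in every integral of \eqref{eq:Reading} and conclude.

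\textbf{Main obstacle.} The delicate point is to match all the structural requirements of $\BE(K,N)$ and of Lemma~\ref{lem:GGamma2tilde} --- namely $\Delta f\in \V$ and $\Delta\varphi\in L^2\cap L^\infty$, together with $\varphi\ge 0$ --- starting only from the weaker hypothesis $(f,\varphi)\in (\Dinfty)^2$. A naive heat-semigroup regularization $\sfP_s\varphi$ would not, in general, produce $L^\infty$-integrability of the Laplacian; the resolvent identity $\Delta u_\eps=(u_\eps-u)/\eps$ instead delivers this automatically, while simultaneously preserving positivity and the $L^\infty$-bound. The second essential ingredient is the strong $L^2$-convergence of $\Gq{f_\eps}$ afforded by the last part of Theorem~\ref{thm:interpolation}, without which the passage to the limit in the nonlinear $\Gamma$-terms would be delicate.
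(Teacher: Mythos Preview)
Your proof is correct and follows the same overall strategy as the paper: regularize $f$ and $\varphi$ into $\dom_\V(\Delta)\cap\dom_{L^\infty}(\Delta)$, apply Lemma~\ref{lem:GGamma2tilde} together with $\BE(K,N)$, and pass to the limit using the $L^2$-continuity of $\Gamma$ from Theorem~\ref{thm:interpolation}.

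The difference lies in the choice of regularization. The paper uses the \emph{mollified heat flow}
\[
\mathfrak H^\eps u=\frac1\eps\int_0^\infty \sfP_r u\,\kappa(r/\eps)\,\d r,\qquad \kappa\in\rmC^\infty_c(0,\infty),
\]
for which $\Delta\mathfrak H^\eps u=-\eps^{-2}\int_0^\infty \sfP_r u\,\kappa'(r/\eps)\,\d r$ lies in $\V\cap L^\infty$ whenever $u\in\V\cap L^\infty$, by the sub-Markov property of $\sfP_r$. Your resolvent approach achieves the same conclusion via the identity $\Delta u_\eps=(u_\eps-u)/\eps$, which is arguably more transparent; your remark that a bare $\sfP_s$ would not suffice is accurate, and the paper's mollification is precisely the device that fixes this. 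Both regularizations preserve positivity, respect the $L^\infty$-bound, and converge strongly in $\D$, so the limiting step---including your subsequence argument for $\varphi_\eps(\Delta f_\eps)^2$---goes through identically.
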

\begin{proof}
  Notice that the right hand side of
  \eqref{eq:49} makes sense if $f,\,g,\varphi\in \Dinfty$ 
  since $\Gq f,\,\Gq g,\,\Gq \varphi\in L^2(X,\mm)$ by Theorem~\ref{thm:interpolation}.
  Under the assumption $\BE(K,N)$, in order to check \eqref{eq:Reading} we introduce the mollified heat flow
  \begin{equation}
    \label{eq:55}
    \mathfrak H^\eps f:= \frac 1\eps \int_0^\infty \sfP_r f\,\kappa(r/\eps)\,\d r,
  \end{equation}
  where $\kappa\in \rmC^\infty_c(0,\infty)$ is a nonnegative regularization kernel
  with $\int_0^\infty \kappa(r)\,\d r=1$.

  Setting $f^\eps:=\mathfrak H^\eps f,\ \varphi^\eps:=\mathfrak H^\eps\varphi$, 
  it is not difficult to check that if $f,\varphi\in \Dinfty$ then $f^\eps,\varphi^\eps
  \in \dom_{\V}(\Delta)\cap \dom_{L^\infty}(\Delta)$; morever, $\varphi^\eps\ge 0$ if $\varphi\ge0$, so that 
  \eqref{eq:9} and \eqref{eq:30} yield
  \begin{displaymath}
    \int_X \Big(\frac12 \Gq {f^\eps}\Delta\varphi^{\eps}+
    \Delta f^\eps\,\Gbil {f^\eps}{\varphi^\eps}+ (1-\nu)\varphi^\eps(\Delta f^\eps)^2\Big)\,\d\mm
    \ge K\int_X \Gq {f^\eps}\,\varphi^\eps\,\d\mm.
  \end{displaymath}
  Since 
  \begin{displaymath}
    f^\eps\to f,\ \varphi^\eps\to \varphi\quad\text{strongly in }\D,\quad
    \|f^\eps\|_{L^\infty(X,\mm)}\le \|f\|_{L^\infty(X,\mm)},\quad
    \|\varphi^\eps\|_{L^\infty(X,\mm)}\le \|\varphi\|_{L^\infty(X,\mm)}
  \end{displaymath}
  we can apply the continuity properties of $\Gamma$ stated in Theorem \ref{thm:interpolation}
  to pass to the limit in the previous inequality as $\eps\downarrow0$.
\end{proof}

\section{Further regularity for $\Gamma(f)$ in $\BE( K,\infty)$ spaces and the measure-valued 
$\Gamma_2$-tensor}
\label{sec:further-regularity}
\newcommand{\cVrestr}[1]{\cV_{\kern-2pt #1}}
\newcommand{\cVl}[1]{\cV_{\kern-2pt #1}}
\newcommand{\cp}{\mathrm{Cap}}

\subsection{Quasi-regular Dirichlet forms and the measure-valued 
$\Gamma_2$-tensor}
In this section we will assume that 
the Dirichlet form $\cE$ is \emph{quasi-regular}, according to 
\textsc{Ma and R\"ockner}:
we refer to  
\cite[III.2, III.3, IV.3]{Ma-Rockner92} 
(covering the more general case of a
possibly non-symmetric Dirichlet form) 
and
\cite[1.3]{Chen-Fukushima12}
for the precise definition and for the related notions of
$\cE$-polar sets and $\cE$-quasi-continuous functions, 
see also 
the concise account of \cite[\S~2.3]{Savare13}.
Here we just recall that this setting covers the main example
of \emph{regular} Dirichlet forms in \emph{locally compact and
  separable metric spaces}, which is sufficient 
for our main applications in the next sections. 
Still the results presented here, at least in the case $\BE(K,\infty)$,
could be interesting 
in more general situations where $(X,\tau)$ is not locally compact:
this is the reason why we state them in greater generality.
\begin{remark}[Regular Dirichlet forms]
  \label{rem:regular}
  \upshape
  When $(X,\tau)$ is also locally compact, 
  we recall that a Dirichlet form $\cE$ is \emph{regular} if 
  $\rm\dom(\cE)\cap \rmC_{c}(X)$ is dense both in $\rm\dom(\cE)$ (w.r.t.\ the
  $\V$-norm)
  and
  in $\rmC_c(X)$ (w.r.t.~uniform convergence).  
\end{remark}
If $\cE$ is quasi-regular, then 
\cite[Remark 1.3.9(ii)]{Chen-Fukushima12}
\begin{displaymath}
\text{every function $f\in \cV$ admits an $\cE$-quasi-continuous
representative $\tilde f$.}
\end{displaymath}
The function $\tilde f$ is uniquely determined up to a $\cE$-polar set.
We introduce the convex set 
$$\cVl + :=\big\{\varphi\in \cV:\varphi\geq
0\ \mm\text{-a.e. in }X\big\}$$ 
and we  denote by $\cVl {}'$ the 
set of continuous linear functionals $\ell:\V\to\R$, while $\cVl +'$ denotes the 
convex subset of all continuous linear functionals $\ell$ such that
$\langle \ell,\phi\rangle\geq 0$ for all $\phi\in \cVl +$;
we also set $\cVl \pm':=\cVl +'-\cVl +'$.

The next result provides an important characterization
of functionals in $\cVl +'$, that motivates our interest for
quasi-regular Dirichlet forms (see \cite[Ch.~VI,
Prop.~2.1]{Ma-Rockner92}
and also \cite[Ch.~I, \S~9.2]{Bouleau-Hirsch91} in the case of 
a finite measure $\mm(X)<\infty$ for the proof).

\begin{proposition}
\label{prop:Daniell}  
Let us assume that $\cE$ is quasi-regular. 
Then for every $\ell\in \cVl +'$ there exists a unique $\sigma$-finite and 
nonnegative Borel measure $\mu_\ell$ in $X$
such that 
\begin{enumerate}[\rm (1)]
\item 
every $\cE$-polar set 
is $\mu_\ell$-negligible; 
\item for all $f\in \cV$ the $\cE$-q.c.~representative $\tilde f$ belongs to $L^1(X,\mu_\ell)$ and
\begin{equation}
  \label{eq:13}
  \langle \ell,f\rangle=\int_X \tilde f\,\d\mu_\ell;
\end{equation}
\item if $\langle \ell,\varphi\rangle\le M<\infty$ for every $\varphi\in
  \cVl +$ with $\varphi\le 1$ $\mm$-a.e. in $X$,
then $\mu_\ell$ is a finite measure and $\mu_\ell(X)\le M$.
\end{enumerate}
\end{proposition}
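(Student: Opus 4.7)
The plan is to follow the Daniell--Stone construction suitably adapted to the quasi-regular Dirichlet framework, essentially the route taken in \cite[Ch.~VI, Prop.~2.1]{Ma-Rockner92}. Quasi-regularity provides an $\cE$-nest $(F_k)_{k\in\N}$ of closed sets such that $X\setminus\bigcup_k F_k$ is $\cE$-polar and such that every $f\in\cV$ admits an $\cE$-q.c.\ representative $\tilde f$ whose restriction to each $F_k$ is continuous. Introducing $\cV_{F_k}:=\{f\in\cV:\tilde f=0\ \cE\text{-q.e.\ on }X\setminus F_k\}$, the union $\bigcup_k\cV_{F_k}$ is dense in $\cV$ and plays the role of the ``compactly supported'' test functions in the classical Riesz representation theorem.

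On each $F_k$ the evaluation $f\mapsto\tilde f\!\restr{F_k}$ sends $\cV_{F_k}$ into a Stone vector lattice of bounded continuous functions on $F_k$. The restriction $\ell\!\restr{\cV_{F_k}}$ is positive (inherited from $\ell\in\cVl +'$) and $\sigma$-order-continuous, since a uniformly bounded monotone sequence with pointwise $\cE$-q.e.\ limit $0$ also converges to $0$ in $\cV$-norm. The abstract Daniell--Stone theorem then yields a finite Borel measure $\mu_k$ on $F_k$ with
\[ \langle\ell,f\rangle=\int_{F_k}\tilde f\,\d\mu_k\qquad\forall\,f\in\cV_{F_k}. \]
Uniqueness of this representation forces the consistency $\mu_{k+1}\!\restr{F_k}=\mu_k$, so $\mu_\ell$ may be defined as the increasing limit of the $\mu_k$ extended by zero off $F_k$, producing a $\sigma$-finite Borel measure concentrated on $\bigcup_k F_k$. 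Property~(1) then holds by construction, since $\mu_\ell$ is concentrated on the non-polar set $\bigcup_k F_k$ and the Daniell--Stone measure inside each $F_k$ does not charge capacity-zero subsets. For~(2), I would approximate a nonnegative $f\in\cV$ by $f\chi_k$ with $\chi_k\in\cV$ and $\tilde\chi_k\uparrow 1$ $\cE$-q.e.\ (obtained from the nest), apply the level-$k$ identity and pass to the monotone limit to deduce $\tilde f\in L^1(\mu_\ell)$ together with the integral formula; the general case follows by linearity after splitting $f=f^+-f^-$. For~(3), if $\langle\ell,\varphi\rangle\le M$ whenever $0\le\varphi\le 1$ in $\cV$, choosing such $\varphi_k$ with $\tilde\varphi_k\uparrow 1$ on $\bigcup_k F_k$ and using monotone convergence gives $\mu_\ell(X)\le M$. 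Uniqueness of $\mu_\ell$ is immediate since the integrals of $\cV$-functions, together with property~(1), determine $\sigma$-finite Borel measures uniquely.

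The main obstacle is the construction step on each $F_k$: verifying that the restrictions of $\cV_{F_k}$-functions to $F_k$ form a Stone vector lattice and, above all, that $\ell\!\restr{\cV_{F_k}}$ is genuinely $\sigma$-order-continuous in the sense required by Daniell--Stone. The Polish, non-locally-compact setting is what forces this abstract route rather than the classical Riesz representation on $\rmC_c(X)$, and quasi-regularity of $\cE$ is precisely the axiom that makes the abstract-to-concrete bridge work, via the interplay between the $\cE$-nest and the $\cE$-q.c.\ representatives.
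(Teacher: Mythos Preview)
The paper does not supply its own proof of this proposition: it simply refers the reader to \cite[Ch.~VI, Prop.~2.1]{Ma-Rockner92} (and \cite[Ch.~I, \S~9.2]{Bouleau-Hirsch91} when $\mm(X)<\infty$). Your sketch is therefore not competing with an argument in the paper but rather outlining what one finds in the cited literature, and you say as much. The overall architecture you describe---restrict to an $\cE$-nest, represent $\ell$ level by level, patch the measures, then pass to the limit for (2) and (3)---is sound and is indeed one standard route.

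One step deserves a sharper justification. You assert that $\ell\!\restr{\cV_{F_k}}$ is $\sigma$-order-continuous because ``a uniformly bounded monotone sequence with pointwise $\cE$-q.e.\ limit $0$ also converges to $0$ in $\cV$-norm.'' As stated this is not correct: pointwise q.e.\ monotone convergence to $0$, even with a uniform $L^\infty$ bound, does not force $\cV$-norm convergence in general. What you actually need for Daniell--Stone is only $\langle\ell,f_n\rangle\to 0$, and for this weak convergence in $\cV$ suffices since $\ell\in\cV'$; alternatively, in the regular/quasi-regular setting one can take the $F_k$ compact and invoke Dini's theorem on $F_k$ together with capacity estimates, or bypass Daniell--Stone entirely by representing $\ell$ via its $1$-potential $u\in\cV$ (i.e.\ $\langle\ell,f\rangle=\cE_1(u,f)$) and using the Revuz-type correspondence between $1$-excessive functions and smooth measures, which is closer to how \cite{Ma-Rockner92} actually proceeds. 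Either way, the point you flag as ``the main obstacle'' is genuinely the delicate step, and your current one-line dispatch of it is the place a careful referee would push back.
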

We will often identify $\ell\in \cVl+'$ with 
the corresponding measure $\mu_\ell$. 
Notice that if $\ell\in \V'$ and there exists $h\in L^1\cap L^2(X,\mm)$ such
that 
\begin{equation}
  \label{eq:51}
  \langle \ell,\varphi\rangle\ge \int_X h\varphi\,\d\mm\quad \text{for
    every }\varphi\in \cVl +
\end{equation}
then there exists a measure $\mu_+\in \cVl+'$ such that 
$\ell$ can be represented by the signed meassure $\mu_\ell=h\mm+\mu_+$.
When for some $f\in\V$ the functional $\ell=\Delta f$  
can be identified with a signed measure $\mu_\ell$, we
will use the notation $\mu_\ell=\Delta^* f$.

The next result collects a few useful 
properties that have been  proved in
\cite[\S\,3]{Savare13};  we introduce the space 
$\GLip:=\big\{f\in \V:\ f,\,\Gq f\in L^\infty(X,\mm)\big\}$.
\begin{theorem}
  \label{thm:mainquote}
  Let us suppose that $\cE$ 
  satisfies the
  $\BE(K,\infty)$ condition.
  \begin{enumerate}[\rm (1)]
  \item 
    For every
    $f\in \dom_\V(\Delta)\cap \GLip$
    we have $\Gq f\in \V$ with
  \begin{equation}
    \label{eq:G3}
    \cE({\Gq f})\le -\int_X \Big(
    2K\Gq f^2+ 2\Gq f\Gbil f{\Delta f}\Big)\,\d\mm.
  \end{equation}
  \item
  $\dom_\V(\Delta)\cap \GLip$ is an algebra (i.e. closed w.r.t.~pointwise
  multiplication)
  and, more generally,
  if $\ff=(f_i)_{i=1}^n\in (\DG)^n$ then $\Phi(\ff)\in \DG$ 
  for every smooth function $\Phi:\R^n\to \R$ with $\Phi(0)=0$.
  \item If $\cE$ is also quasi-regular and $f\in \dom_\V(\Delta)\cap \GLip$, then $\Delta \Gq f$ 
  can be represented by a signed measure vanishing on $\cE$-polar sets and, defining
  \begin{equation}
    \label{eq:50}
    \Gamma^*_{2,K} [f]:=\frac 12 \Delta^\star \Gq f-\Big(\Gbil f{\Delta f}+K\Gq f\Big)\mm,
  \end{equation}
  the measure $\Gamma^*_{2,K}[f]$ is nonnegative, satisfies
   \begin{equation}
  \label{eq:81}
  \Gamma_{2,K}^*[f](X)
  \le \int_X \Big(\big(\Delta f\big)^2-K\Gq f\Big)\,\d\mm
  \end{equation} 
  and provides a representation of the
  $\GGamma_2$ multilinear
  form as follows:
  \begin{equation}
    \label{eq:52}
    \GGamma_2(f;\varphi)=\int_X
    \tilde\varphi\,\d\,\Emeas Kf +K\int_X \Gq f\varphi\,\d\mm,\quad
    \forevery \varphi\in \dom_{L^\infty}(\Delta).
  \end{equation}
  \item 
    There exists a continuous, symmetric and bilinear map 
     $\gamma_{2,K}:\big(\dom_\V(\Delta)\cap \GLip\big)^2\to L^1(X,\mm)$ 
     such that for every $f\in \dom_\V(\Delta)\cap \GLip$
      (so that $\Gq f\in \Vinfty$) 
      there holds 
     \begin{equation}
       \label{eq:2}
       \Gamma_{2,K}^\star[f]=\ebilmeas K ff\mm+\Gamma_{2}^\perp[f],
       \quad\text{with}\quad \Gamma_{2}^\perp[f]\geq 0,\quad \Gamma_{2}^\perp[f]\perp \mm.
     \end{equation}
         Setting $\emeas Kf:=\ebilmeas Kff\ge 0$, one has 
     for every $f\in \DG$
  \begin{align}
     \label{eq:57}
  \Gq{\Gq f}&\le 4\emeas K f\,\Gq f\quad
  \text{$\mm$-a.e.~in $X$.}
\end{align}
\end{enumerate}
\end{theorem}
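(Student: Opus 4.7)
The four assertions follow, in the order stated, by repeated application of the integral $\BE(K,\infty)$ inequality \eqref{eq:Reading} of Corollary~\ref{cor:weaker-assumption-G2} tested against $\Gq f$; I follow the strategy of \cite{Savare13}. For (1), since $\Gq f$ is not a priori admissible, I would first replace $f$ by its heat-flow regularization $f^\eps:=\mathfrak H^\eps f$ from \eqref{eq:55}: these approximants enjoy iterated regularity $\Delta^k f^\eps\in\V\cap L^\infty(X,\mm)$ for every $k\ge 1$, and the Leibniz identity $\Delta\bigl((f^\eps)^2\bigr)=2f^\eps\,\Delta f^\eps+2\Gq{f^\eps}$ immediately gives $\Gq{f^\eps}\in\V\cap\dom_{L^\infty}(\Delta)$. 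Plugging $\varphi=\Gq{f^\eps}$ into \eqref{eq:Reading} and recognizing $\int_X\Gq{f^\eps}\Delta\Gq{f^\eps}\,\d\mm=-\cE\bigl(\Gq{f^\eps}\bigr)$ produces \eqref{eq:G3} for $f^\eps$, and the limit $\eps\downarrow 0$ is controlled by the uniform $L^\infty$-bound on $f^\eps$, the strong $\D$-convergence $f^\eps\to f$, the $L^2$-continuity of $\Gamma$ from Theorem~\ref{thm:interpolation}, and the lower semicontinuity of $\cE$. Part (2) is then a direct Leibniz computation: for $f,g\in\DG$ the identities $\Gq{fg}=f^2\Gq g+2fg\,\Gbil fg+g^2\Gq f$ and $\Delta(fg)=f\Delta g+g\Delta f+2\Gbil fg$ give $fg\in\GLip$, while the $\V$-regularity of $\Delta(fg)$ follows from (1) (which ensures $\Gq f,\Gq g\in\V$) together with the $L^\infty$-bounds on $f,g,\Gq f,\Gq g$; for a general smooth $\Phi$ with $\Phi(0)=0$ one expands on the bounded range of $\ff$ and iterates the algebra property on polynomials.

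For (3), I would read $\BE(K,\infty)$ as the assertion that
\[
\ell_f(\varphi):=\GGamma_2(f;\varphi)-K\int_X \Gq f\,\varphi\,\d\mm
\]
is nonnegative on $\dom_{L^\infty}(\Delta)\cap\cVl{+}$. Using the definition \eqref{eq:80} of $\GGamma_2$ together with $\Gq f\in\V$ from (1), one rewrites
\[
\ell_f(\varphi)=-\tfrac12\cE(\Gq f,\varphi)-\int_X\bigl(\Gbil f{\Delta f}+K\Gq f\bigr)\varphi\,\d\mm,
\]
an expression that extends by continuity and density (Stampacchia truncation followed by heat-flow regularization) to a nonnegative continuous functional on all of $\cVl{+}$; the second integrand lies in $L^2(X,\mm)$ because $\Gq f\in L^\infty$ and $\Delta f\in\V$. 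Proposition~\ref{prop:Daniell} then produces a nonnegative Borel measure $\Emeas Kf$ charging no $\cE$-polar set and for which the representation \eqref{eq:52} is automatic. For the total-mass bound \eqref{eq:81}, one feeds formula \eqref{eq:30} of Lemma~\ref{lem:GGamma2tilde} (applied to the smoother $f^\eps$, which lies in $\dom_\V(\Delta)\cap\dom_{L^\infty}(\Delta)$) with cutoffs $\chi_n\in\dom_{L^\infty}(\Delta)\cap\cVl{+}$ approximating $\mathbf 1$ with $\Delta\chi_n$ and $\Gbil{f^\eps}{\chi_n}$ vanishing in appropriate senses, so that $\GGamma_2(f^\eps;\chi_n)\to\int_X(\Delta f^\eps)^2\,\d\mm$ while monotone convergence gives $\int_X\tilde\chi_n\,\d\Emeas K{f^\eps}\uparrow\Emeas K{f^\eps}(X)$; a final passage to the limit $\eps\downarrow 0$ closes the estimate.

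Finally, part (4) begins with the Lebesgue decomposition of the nonnegative Borel measure $\Emeas Kf$ with respect to $\mm$, which defines $\emeas Kf$; the bilinearity of $\gamma_{2,K}$ is inherited, via \eqref{eq:46} and polarization, from that of $\GGamma_2$ combined with (3). The pointwise Cauchy--Schwarz inequality \eqref{eq:57} is the most delicate point: in the smooth Riemannian model it corresponds to $\bigl|\nabla|\nabla f|^2\bigr|^2\le 4\,|\nabla f|^2\,|\mathrm{Hess}\,f|^2$, and in the present abstract setting I would derive it by applying (1) to $f+t g$ for $g\in\DG$, extracting the resulting quadratic form in $t$, and comparing its discriminant at the level of $\mm$-absolutely continuous densities after the Lebesgue decomposition. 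The main obstacle I anticipate lies in (3), namely the rigorous Daniell identification of $\ell_f$ with a measure via Proposition~\ref{prop:Daniell} and the construction of a sufficient supply of admissible cutoffs to saturate \eqref{eq:81}; a secondary difficulty is \eqref{eq:57}, where transporting the scalar $\GGamma_2$-inequality of part (1) to a pointwise relation between densities requires delicate handling of the singular part in the Lebesgue decomposition.
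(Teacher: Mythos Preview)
The paper does not prove Theorem~\ref{thm:mainquote}; as the sentence immediately preceding it says, the result is quoted from \cite[\S 3]{Savare13}. Your outline explicitly follows that reference, so there is no ``paper's own proof'' to compare against.

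That said, two steps in your sketch are genuine gaps relative to the actual argument in \cite{Savare13}. For the total-mass bound \eqref{eq:81} in part~(3) you invoke cutoffs $\chi_n\in\dom_{L^\infty}(\Delta)\cap\cVl{+}$ increasing to $1$ with $\Delta\chi_n$ and $\Gbil{f^\eps}{\chi_n}$ vanishing in suitable senses; such functions are not generally available in an abstract quasi-regular Dirichlet space (you are tacitly importing the metric constructions of Section~\ref{sec:LTGComp}). More seriously, your route to \eqref{eq:57} does not work: applying part~(1) or the measure positivity of part~(3) to the \emph{linear} perturbation $f+tg$ and reading the discriminant gives only the Cauchy--Schwarz inequality $\gamma_{2,K}[f,g]^2\le\gamma_{2,K}[f]\,\gamma_{2,K}[g]$ for the bilinear density; it produces no relation between $\gamma_{2,K}[f]$ and $\Gq{\Gq f}$. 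The inequality \eqref{eq:57} is precisely the \emph{self-improvement} step of \cite{Savare13}: one applies $\BE(K,\infty)$ to \emph{nonlinear} compositions $\Phi(f,g_1,\dots,g_n)$, expands $\Gamma_{2,K}^\star[\Phi(\ff)]$ through the full $\Gamma$-chain rule so as to isolate the Hessian-type trilinear forms $H[f](g,h)$ (formally $\langle\mathrm{Hess}\,f\,\nabla g,\nabla h\rangle$), and then optimises over the auxiliary functions; specialising $g=f$ is what links $\Gq{\Gq f}=4\,\bigl|H[f](f,\cdot)\bigr|^2$ to $\gamma_{2,K}[f]\,\Gq f$. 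A purely linear perturbation never sees these Hessian cross terms, so the discriminant argument you describe cannot reach \eqref{eq:57}.
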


Notice that the measures $\Gamma_{2,K}^\star [f]$, $K\in \R$, just
differ by a multiple of $\Gq f\mm$, so the (nonnegative) singular part in 
the Lebesgue decomposition \eqref{eq:2} is independent of $K$.
In all the relevant estimates, it would be sufficient to consider 
the Lebesgue density $\gamma_{2,K}[\cdot]$, but it is still useful
to think in terms of measures to recover all the information 
coded inside $\GGamma_2(\cdot;\cdot)$.

If $\BE(K,N)$ holds and $\cE$ is quasi-regular, we have
the refined inequalities
\begin{equation}
  \label{eq:56}
  \Gamma_{2,K}^\star[f]\ge \emeas Kf\mm\ge \nu(\Delta f)²\mm.
\end{equation}

\subsection{Measure-valued $\Gamma_2$ tensor under 
lower regularity assumptions}
In this section we want to show that 
the regularity assumptions in Theorem~\ref{thm:mainquote} can be
considerably relaxed: 
in particular we will give a meaning to $\Gamma_{2,K}^\star[f]$ 
for every $f\in \dom_{L^4}(\Delta).$ The main tools are the a-priori
estimates
of Theorem~\ref{thm:crucial}
and the following simple approximation result.
\begin{lemma}
  \label{lem:Approximation2}
  Let us assume $\BE(K,\infty)$ and $p\in (1,\infty)$. 
  For every $f\in \dom_{L^p}(\Delta)$ there exist
  $f_n\in \dom_{\V}(\Delta)\cap \GLip$ converging to 
  $f$ in $\dom_{L^p}(\Delta)$.
\end{lemma}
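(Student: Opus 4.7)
The plan is to combine two approximation devices already deployed in the paper: the Stampacchia truncation of Lemma~\ref{lem:Approximation} (which yields a bounded approximant by acting at the level of $h := f-\Delta f$) and the mollified heat flow of Corollary~\ref{cor:weaker-assumption-G2} (which provides the missing regularity $\Delta f_n\in \V$).

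First I would set $h := f-\Delta f\in L^2\cap L^p(X,\mm)$ and truncate, $h_n := \max\{\min\{h,n\},-n\}$. Since $p\in(1,\infty)$, dominated convergence yields $h_n\to h$ in $L^2\cap L^p(X,\mm)$. Let $f_n\in \V$ be the resolvent solution of $f_n-\Delta f_n=h_n$; then $f_n\to f$ in $\V$ by continuity of $(I-\Delta)^{-1}$, and by the Stampacchia truncation argument recalled in the proof of Lemma~\ref{lem:Approximation} one has $\|f_n\|_{L^\infty(X,\mm)}\le \|h_n\|_{L^\infty(X,\mm)}\le n$. Consequently $\Delta f_n=f_n-h_n\in L^2\cap L^\infty(X,\mm)$, so $f_n\in \dom_{L^\infty}(\Delta)$, and Theorem~\ref{thm:interpolation} applied with $p=\infty$ gives $\Gq{f_n}\in L^\infty(X,\mm)$, i.e.\ $f_n\in \GLip$. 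Moreover $f_n-\Delta f_n=h_n\to h=f-\Delta f$ in $L^2\cap L^p(X,\mm)$, so that $f_n\to f$ in $\dom_{L^p}(\Delta)$.

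Second, $\Delta f_n$ need not lie in $\V$, so I would smooth each $f_n$ by the mollified heat flow $g_{n,\eps}:=\mathfrak H^\eps f_n$ defined in \eqref{eq:55}. The spectral-calculus smoothing of $(\sfP_r)_{r>0}$ (combined with the fact that $\kappa$ is supported away from $0$) ensures $g_{n,\eps}\in \bigcap_k \dom(\Delta^k)$; in particular $\Delta g_{n,\eps}\in \V$, so $g_{n,\eps}\in \dom_\V(\Delta)$. The Markov property propagates the $L^\infty$ bounds,
\[
\|g_{n,\eps}\|_{L^\infty}\le \|f_n\|_{L^\infty},\qquad
\|\Delta g_{n,\eps}\|_{L^\infty}=\|\mathfrak H^\eps(\Delta f_n)\|_{L^\infty}\le\|\Delta f_n\|_{L^\infty},
\]
where we used the commutation $\Delta \mathfrak H^\eps=\mathfrak H^\eps\Delta$ on $\dom(\Delta)$. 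Hence $g_{n,\eps}\in \dom_{L^\infty}(\Delta)$, and Theorem~\ref{thm:interpolation} once more yields $\Gq{g_{n,\eps}}\in L^\infty(X,\mm)$, so that $g_{n,\eps}\in \dom_\V(\Delta)\cap \GLip$.

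Third, strong continuity of $(\sfP_r)$ at $r=0$ on $\V$ and on $L^q(X,\mm)$ for every $q\in[1,\infty)$ gives $g_{n,\eps}\to f_n$ in $\V$ and $g_{n,\eps}-\Delta g_{n,\eps}=\mathfrak H^\eps h_n\to h_n$ in $L^2\cap L^p(X,\mm)$ as $\eps\downarrow 0$; thus $g_{n,\eps}\to f_n$ in $\dom_{L^p}(\Delta)$. A diagonal choice $\eps_n\downarrow 0$ fast enough then makes $\tilde f_n:=g_{n,\eps_n}$ an element of $\dom_\V(\Delta)\cap \GLip$ with $\tilde f_n\to f$ in $\dom_{L^p}(\Delta)$. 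The only delicate point is ensuring that the double regularization really lands in $\dom_\V(\Delta)\cap \GLip$: the non-obvious part is $\Gq{g_{n,\eps}}\in L^\infty$, which however is exactly furnished by Theorem~\ref{thm:interpolation} thanks to the $L^\infty$ control on both $g_{n,\eps}$ and $\Delta g_{n,\eps}$ that the mollified heat flow preserves. Everything else is routine continuity and resolvent estimates.
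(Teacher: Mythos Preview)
Your proof is correct and follows essentially the same route as the paper's own argument: first use the resolvent with truncated data $h_n$ to land in $\dom_{L^\infty}(\Delta)$, then apply the mollified heat flow $\mathfrak H^\eps$ to gain $\Delta\in\V$, and finish by a diagonal extraction. You actually supply more detail than the paper at the one nontrivial point, namely why $g_{n,\eps}\in\GLip$ (via Theorem~\ref{thm:interpolation} with $p=\infty$), which the paper leaves implicit.
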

\begin{proof}
  We argue as in the proof of Lemma~\ref{lem:Approximation}: by setting $h=f-\Delta f\in L²\cap
  L^p(X,\mm)$ and considering the truncated functions $h_n\in L²\cap
  L^\infty(X,\mm)$ as in \eqref{eq:6} and $f_n$ given by
  \eqref{eq:fn}, it is immediate to see that $f_n$ converge to $f$ in
  $\dom_{L^p}(\Delta)$ thanks to \eqref{eq:39}. 
  On the other hand, $f_n\in\dom_{L^\infty}(\Delta)$, so that $f_{n,\eps}:=\mathfrak H_{\eps}f_n$
  (where $\mathfrak H_\eps$ is given by \eqref{eq:55}) belong to $\DG$
  and converge to $f_n$ in $\dom_{L^p}(\Delta)$ as $\eps\down0$.
  A simple diagonal argument exhibits a sequence $f_{n,\eps_n}$ 
  satisfying the thesis of the Lemma.
\end{proof}
\begin{theorem}
  \label{thm:crucial}
  Let us assume that $\BE(K,\infty)$ holds and let $f,\,g\in
  \dom_{L^4}(\Delta)$.
  Then $\Gbil fg\in \V$ and 
  for every $\lambda\ge (K-1/2)_-$ and $f,\,g\in \dom_{L^4}(\Delta)$ we
  have (with non-optimal constants)
  \begin{equation}
    \label{eq:10}
    \|\Gq f\|_\V\le 
    2\sqrt {10}%\sqrt{{17}/2} 
    \,  \|\Delta_\lambda f\|_{L^4}^2,\quad
    \|\Gbil fg\|_\V\le 4\sqrt {10} \, \|\Delta_\lambda f\|_{L^4}\,\|\Delta_\lambda g\|_{L^4}.
  \end{equation}
  In particular, if $\BE(K,N)$ holds, for every $f\in \dom_{L^4}(\Delta)$ and $\varphi\in
  \V_+$ we have
  \begin{equation}
    \label{eq:11}
    \int_X \Big(-\frac 12\Gbil{\Gq f}\varphi+
      \Delta f\,\Gbil f\varphi+\varphi(\Delta f)^2\Big)\,\d\mm\ge 
      \int_X \Big(K\,\Gq f+\nu(\Delta f)^2\Big)\varphi\,\d\mm
  \end{equation}
  and for every $\varphi\in \V_+$  with $\Gq \varphi\in L^\infty(X,\mm)$,
  setting $K_\lambda:=2K+2\lambda\ge 1$, there holds
  %(here we consider $\nu=0$)
  \begin{equation}
    \label{eq:12}
    \begin{aligned}
      \int_X \Big(\Gq{\Gq f}+K_\lambda \Gq f^2\Big)\varphi\,\d\mm 
      &\le 2\int_X \Big(\Delta_\lambda f\Gbil{f}{\Gq f}+
      \Delta f\,\Delta_\lambda f\Gq       f
      \Big)\varphi\,\d\mm
      \\&+
      \int_X \Big(-\Gq f\Gbil{\Gq f}\varphi
      %+2\Delta_\lambda f\Gbil{f}{\Gq f}\varphi
      +
      2\Delta_\lambda f\,\Gq f\Gbil{f}\varphi
      \Big)\,\d\mm.
    \end{aligned}
  \end{equation}
\end{theorem}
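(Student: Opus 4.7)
The three inequalities will be derived by establishing them first for the dense subclass $f\in\DG$ and then extending to $f\in\dom_{L^4}(\Delta)$ via Lemma~\ref{lem:Approximation2}. Throughout I fix $\lambda\ge(K-1/2)_-$ so that $K_\lambda:=2K+2\lambda\ge 1$, and I split $\Delta f=\Delta_\lambda f+\lambda f$ everywhere, placing all the $L^4$-control on the coercive quantity $\Delta_\lambda f$. For $f\in\DG$, Theorem~\ref{thm:mainquote}(1) already gives $\Gq f\in\V$ together with $\cE(\Gq f)\le-2K\int\Gq f^2\,\d\mm-2\int\Gq f\,\Gbil f{\Delta f}\,\d\mm$. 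A further mollification by $\mathfrak H^\eps$ (cf.~\eqref{eq:55}) provides enough regularity to justify the Leibniz identity
\[
\Gq f\,\Gbil f{\Delta_\lambda f}=\Gbil f{\Gq f\,\Delta_\lambda f}-\Delta_\lambda f\,\Gbil f{\Gq f}
\]
and the integration by parts of the first term; letting $\eps\downarrow 0$ produces the unweighted version of \eqref{eq:12}, namely
\begin{equation}
\int\Gq{\Gq f}\,\d\mm+K_\lambda\int\Gq f^2\,\d\mm\le 2\int\Gq f\,\Delta f\,\Delta_\lambda f\,\d\mm+2\int\Delta_\lambda f\,\Gbil f{\Gq f}\,\d\mm.
\label{eq:planCore}
\end{equation}

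Inequality \eqref{eq:10} is extracted from \eqref{eq:planCore} by Cauchy--Schwarz, H\"older with exponents $(2,4,4)$, the pointwise bound $|\Gbil f{\Gq f}|\le\Gq f^{1/2}\,\Gq{\Gq f}^{1/2}$, and the resolvent contractivity $\|\Delta f\|_{L^4}\le 2\|\Delta_\lambda f\|_{L^4}$ from \eqref{eq:39}: the right-hand side is dominated by $4\|\Gq f\|_{L^2}\|\Delta_\lambda f\|_{L^4}^2+2\,\cE(\Gq f)^{1/2}\|\Gq f\|_{L^2}^{1/2}\|\Delta_\lambda f\|_{L^4}$, after which Young's inequality absorbs $\tfrac12\cE(\Gq f)$ into the left-hand side, yielding $\|\Gq f\|_\V\le c\|\Delta_\lambda f\|_{L^4}^2$. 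The bilinear estimate then follows by polarization and the standard rescaling $(f,g)\mapsto(sf,s^{-1}g)$. Applying this bound to $f_n-f_m$, where $\{f_n\}\subset\DG$ is the approximating sequence produced by Lemma~\ref{lem:Approximation2}, shows that $\{\Gq{f_n}\}$ is Cauchy in $\V$; hence $\Gq f\in\V$ for every $f\in\dom_{L^4}(\Delta)$ and \eqref{eq:10} holds in full generality.

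For \eqref{eq:11} I start from \eqref{eq:Reading} of Corollary~\ref{cor:weaker-assumption-G2} with $f,\varphi\in\Dinfty$, $\varphi\ge 0$, and rewrite $\int\tfrac12\Gq f\,\Delta\varphi\,\d\mm=-\tfrac12\int\Gbil{\Gq f}\varphi\,\d\mm$, now that $\Gq f\in\V$; extending to $f\in\dom_{L^4}(\Delta)$ and $\varphi\in\V_+$ is a routine approximation based on the $\V$-convergence $\Gq{f_n}\to\Gq f$ from the previous paragraph and on $\mathfrak H^\eps\varphi\in\Dinfty$ for the test function. For \eqref{eq:12}, I insert $\psi:=\varphi\Gq f$ as test in \eqref{eq:11}: when $f\in\DG$ one has $\Gq f\in\V\cap L^\infty$, and the hypothesis $\Gq\varphi\in L^\infty$ ensures $\psi\in\V_+$. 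Expanding $\Gbil{\Gq f}\psi$ and $\Gbil f\psi$ by the Leibniz rule and splitting $\Delta f=\Delta_\lambda f+\lambda f$ in every occurrence produces exactly the four terms on the right-hand side of \eqref{eq:12} together with the $\lambda$-remainder $2\lambda\int f\bigl[\Gbil f\psi+\Delta f\,\psi\bigr]\,\d\mm$; applying the chain rule $\Delta(f^2)=2f\Delta f+2\Gq f$ and one further integration by parts collapses this remainder exactly to $-2\lambda\int\varphi\,\Gq f^2\,\d\mm$, which combines with the $2K\int\varphi\,\Gq f^2\,\d\mm$ contribution of \eqref{eq:11} to yield the $K_\lambda$-coefficient of \eqref{eq:12}.

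The hardest point is the closure by approximation of \eqref{eq:12} from $f_n\in\DG$ to $f\in\dom_{L^4}(\Delta)$: the mixed integrals $\int\Delta_\lambda f_n\,\Gbil{f_n}{\Gq{f_n}}\,\varphi\,\d\mm$ and $\int\Delta_\lambda f_n\,\Gq{f_n}\,\Gbil{f_n}\varphi\,\d\mm$ require controlling $\Gbil{f_n}{\Gq{f_n}}$ in, say, $L^{4/3}$, so that it pairs with $\Delta_\lambda f_n\in L^4$. I expect this to follow from the pointwise bound $|\Gbil{f_n}{\Gq{f_n}}|\le\Gq{f_n}^{1/2}\Gq{\Gq{f_n}}^{1/2}$, the $\V$-convergence of $\Gq{f_n}$ obtained in the second paragraph, and the interpolation estimates of Theorem~\ref{thm:interpolation}.
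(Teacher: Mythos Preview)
Your approach is essentially the paper's, and it is correct. Two remarks:

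For the core identity \eqref{eq:planCore}, the paper obtains it directly from \eqref{eq:G3} and ``an integration by parts'', without the $\mathfrak H^\eps$-mollification you insert; your extra caution is not unreasonable, since for $f\in\DG$ the product $\Delta_\lambda f\cdot\Gq f$ is not obviously in $\V$, but the paper does not pause on this point.

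The only structural difference is in the derivation of \eqref{eq:12}, and here the paper's order of operations is cleaner than yours and sidesteps entirely the ``hardest point'' you flag at the end. Rather than establishing \eqref{eq:12} for $f\in\DG$ and then passing to the limit $f_n\to f$ (which, as you note, forces you to control $\Gbil{f_n}{\Gq{f_n}}$), the paper first establishes \eqref{eq:11} in full generality for $f\in\dom_{L^4}(\Delta)$ and $\varphi\in\V_+$, and \emph{then} substitutes $\varphi\mapsto\Gq f\,\varphi$ directly at that level. This is legitimate because, once \eqref{eq:10} is in hand, $\Gq f\in\V$; hence for $\varphi\in\V_+$ bounded with $\Gq\varphi\in L^\infty$ the product $\Gq f\,\varphi$ lies in $\V_+$, and all the resulting integrals are finite by H\"older with exponents $(4,4,2)$ or $(4,4,4,4)$. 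Unbounded $\varphi$ is then handled by truncation. No second approximation in $f$ is ever needed for \eqref{eq:12}, and the convergence of the mixed terms $\int\Delta_\lambda f_n\,\Gbil{f_n}{\Gq{f_n}}\varphi\,\d\mm$ simply does not arise. Your route would also close, but reversing the order (approximate in $f$ first, substitute the test function afterwards) buys you a much shorter argument.
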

\begin{proof} For every $f\in \DG$, \eqref{eq:G3} and an integration
by parts immediately yield
\begin{align}
  \notag
  K_\lambda\int_X \Gq f^2\,\d\mm+
  \cE(\Gq f)&\le -2\int_X \Gq f\Gbil{f}{\Delta_\lambda f}\,\d\mm
  \\&=
  2\int_X \Big(\Delta_\lambda f\,\Delta f \, \Gq f+\Delta_\lambda
  f\,\Gbil{f}{\Gq f}\Big)\,\d\mm.
  \label{eq:7}
\end{align}
By \eqref{eq:39} and the H\"older, inequality the right hand side of \eqref{eq:7} can be bounded by
  \begin{align*}
    4\|\Delta_\lambda f\|^2_{L^4}&\|\Gq f\|_{L^2}^{\phantom 2}+
    2\|\Delta_\lambda f\|_{L^4}^{\phantom {1/2}}\,\|\Gq f\|_{L^2}^{1/2}\|\Gq{\Gq
      f}\|_{L^1}^{1/2}
    \\&\le 
    \frac 14 \|\Gq f\|^2_{L^2}+
    16\|\Delta_\lambda f\|_{L^4}^4+
    \frac 12\cE(\Gq f)+\frac 14 \|\Gq f\|^2_{L^2}+4
    \|\Delta_\lambda f\|_{L^4}^4
  \end{align*}
  which yields the first estimate of \eqref{eq:10}. 
  The second one can be obtained by polarization, see the next Remark~\ref{rem:polarization}. 
  
  Now we use Lemma~\ref{lem:Approximation2} to approximate 
  any $f\in \dom_{L^4}(\Delta)$ with a sequence $f_n$ in $\DG$ converging to 
  $f$ in $\dom_{L^4}(\Delta)$ and we pass to the limit in
  \eqref{eq:10}
  by using the obvious bounds
  \begin{equation}\label{eq:sanvito}
    \big\|\Gq {f_n}-\Gq{f_m}\big\|_\V=
    %\frac 12 
    \big\|\Gbil{f_n-f_m}{f_n+f_m}\big\|_\V\le 
    4\sqrt{10}\big\|\Delta_\lambda (f_n-f_m)\big\|_{L^4}
    \big\|\Delta_\lambda (f_n+f_m)\big\|_{L^4}.
  \end{equation}
  By the regularity of $\Gq f$ we can easily integrate by parts
  \eqref{eq:Reading} obtaining 
    \begin{equation}\label{eq:Reading2}
      \GGamma_2(f;\varphi)=\int_X -\frac 12\Gbil{\Gq f}\varphi+
      \Delta f\,\Gbil f\varphi+\varphi(\Delta f)^2\,\d\mm
  \end{equation}
  and thus, if $\BE(K,N)$ holds, \eqref{eq:11}.
  If $\varphi\in \V_+$ is bounded with $\Gq \varphi\in L^\infty(X,\mm)$, the inequality \eqref{eq:12} 
  is an immediate consequence of \eqref{eq:11} with $\nu=0$, by replacing $\varphi$ with $\Gq f\varphi$. In the general case
  $\varphi\in \V_+$ with $\Gq \varphi\in L^\infty(X,\mm)$ we use a truncation argument.
\end{proof}
\begin{remark}
  \label{rem:polarization}
  \upshape
  If $A,\,B$ are normed spaces and
  $G:A\times A\to B$ is a symmetric bilinear map satisfying 
  $\|G(a,a)\|_B\le C\|a\|_A^2$ for every $a\in A$, then 
  $G$ is continuous and satisfies
  \begin{equation}
    \label{eq:16}
    \|G(a_0,a_1)\|_B\le 2C \,\|a_0\|_A\|a_1\|_A\quad
    \forevery a_0,a_1\in A.
  \end{equation}
  It is sufficient to apply the polarization identity to $G$
  %$G(a_0,a_1)=\frac 14G(a_0+a_1,a_0+a_1)+\frac 14G(a_0-a_1,a_0-a_1)$ 
  to obtain the estimate
  \begin{displaymath}
    \|G(a_0,a_1)\|_B\le \frac
    C4\Big(\|a_0+a_1\|_A^2+\|a_0-a_1\|_A^2\Big)
    \le C\Big(\|a_0\|_A^2+\|a_1\|_A^2\Big).
  \end{displaymath}
  Then, substituting $a_0$ by $\lambda a_0$ and $a_1$ by
  $\lambda^{-1}a_1$ and optimizing w.r.t.~the parameter $\lambda>0$
  the inequality \eqref{eq:16} follows.
\end{remark}
\begin{corollary}
  \label{cor:crucial}
  Assume that $\BE(K,\infty)$ holds. Then, for every $f\in \dom_{L^4}(\Delta)$ the linear functional 
  \begin{equation}
    \label{eq:61}
    \V\ni\varphi \mapsto 
    \int_X -\frac 12\Gbil{\Gq f}\varphi+
    \Delta f\,\Gbil f\varphi+\big((\Delta f)^2-K\Gq f\big)\varphi\,\d\mm
  \end{equation}
  belongs to $\cVl +'$ and can be represented by a measure that satisfies
  \eqref{eq:81} and \eqref{eq:57}, with $\emeas Kf$ defined as in \eqref{eq:2}.
  We still denote this measure by $\Gamma_{2,K}^\star[f]$.
\end{corollary}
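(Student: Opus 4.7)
The plan is to realize~\eqref{eq:61} as a nonnegative element of $\V'$, apply Proposition~\ref{prop:Daniell} to obtain the representing measure, and then recover the bounds~\eqref{eq:81} and~\eqref{eq:57} by approximating $f\in\dom_{L^4}(\Delta)$ with functions in $\DG$ via Lemma~\ref{lem:Approximation2} and invoking Theorem~\ref{thm:mainquote} for the approximants.

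Denote by $L(\varphi)$ the functional~\eqref{eq:61}. By Theorem~\ref{thm:crucial}, $\Gq f\in\V$ with $\|\Gq f\|_\V\lesssim\|\Delta_\lambda f\|_{L^4}^2$; combined with $\Delta f\in L^4$ and with the Cauchy--Schwarz inequality for $\Gamma$, this controls every summand defining $L(\varphi)$ by a constant depending only on $f$ times $\|\varphi\|_\V$, so $L\in\V'$. For $\varphi\in\V_+$, \eqref{eq:11} specialised to $\nu=0$ (the $\BE(K,\infty)$ case) is exactly $L(\varphi)\ge 0$, hence $L\in\cVl +'$. Proposition~\ref{prop:Daniell} then produces the unique $\sigma$-finite nonnegative Borel measure $\Gamma_{2,K}^\star[f]$ vanishing on $\cE$-polar sets and such that $L(\varphi)=\int\tilde\varphi\,\d\Gamma_{2,K}^\star[f]$ for every $\varphi\in\V$; its Lebesgue decomposition $\gamma_{2,K}[f]\,\mm+\Gamma_2^\perp[f]$ defines $\emeas Kf$ as in~\eqref{eq:2}.

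To prove~\eqref{eq:81}, let $f_n\in\DG$ approximate $f$ in $\dom_{L^4}(\Delta)$. Theorem~\ref{thm:mainquote}(3) supplies measures $\Gamma_{2,K}^\star[f_n]$ with $\Gamma_{2,K}^\star[f_n](X)\le\int((\Delta f_n)^2-K\Gq{f_n})\,\d\mm$ and $L_{f_n}(\varphi)=\int\tilde\varphi\,\d\Gamma_{2,K}^\star[f_n]$ for every $\varphi\in\V$. Estimate~\eqref{eq:sanvito} yields $\Gq{f_n}\to\Gq f$ in $\V$, and~\eqref{eq:39} applied to the approximants constructed in Lemma~\ref{lem:Approximation2} gives $\Delta f_n\to\Delta f$ in $L^2\cap L^4$; hence $L_{f_n}(\varphi)\to L_f(\varphi)$ for every $\varphi\in\V$. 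For any $\varphi\in\cVl +$ with $\varphi\le 1$ $\mm$-a.e.\ (so that $\tilde\varphi\le 1$ quasi-everywhere), one then gets $L_f(\varphi)=\lim_n\int\tilde\varphi\,\d\Gamma_{2,K}^\star[f_n]\le\liminf_n\Gamma_{2,K}^\star[f_n](X)\le\int((\Delta f)^2-K\Gq f)\,\d\mm$, the last inequality using the $L^2$-convergence of $\Delta f_n$ and $\Gq{f_n}$. Proposition~\ref{prop:Daniell}(3) delivers~\eqref{eq:81}.

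Finally, Theorem~\ref{thm:mainquote}(4) applied to $f_n\in\DG$ gives $\Gq{\Gq{f_n}}\le 4\gamma_{2,K}[f_n]\Gq{f_n}$ $\mm$-a.e. For $\psi\in\V_+\cap L^\infty$, since $\psi\,\Gq{f_n}\in\V$ and $\gamma_{2,K}[f_n]\,\mm\le\Gamma_{2,K}^\star[f_n]$, integration yields $\int\psi\,\Gq{\Gq{f_n}}\,\d\mm\le 4 L_{f_n}(\psi\,\Gq{f_n})$. Passing to the limit via the $\V$-convergence of $\Gq{f_n}$ (which also forces $\psi\,\Gq{f_n}\to\psi\,\Gq f$ in $\V$) gives $\int\psi\,\Gq{\Gq f}\,\d\mm\le 4\int\widetilde{\psi\,\Gq f}\,\d\Gamma_{2,K}^\star[f]$; decomposing the right-hand side via~\eqref{eq:2} and restricting $\psi$ to vanish on a fixed $\mm$-null Borel set carrying $\Gamma_2^\perp[f]$ eliminates the singular term, so varying $\psi$ over this class yields~\eqref{eq:57}. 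The main obstacle is precisely this last manoeuvre: the Lebesgue densities $\gamma_{2,K}[f_n]$ need not converge to $\gamma_{2,K}[f]$, since in the limit additional mass can concentrate on an $\mm$-null set of positive capacity; the mutual singularity $\Gamma_2^\perp[f]\perp\mm$ is essential to suppress that spurious contribution.
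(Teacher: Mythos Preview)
Your treatment of the first two claims---that $L\in\cVl+'$ and that the representing measure satisfies~\eqref{eq:81}---is correct and matches the paper's argument. The gap is in your proof of~\eqref{eq:57}. The manoeuvre ``restricting $\psi$ to vanish on a fixed $\mm$-null Borel set carrying $\Gamma_2^\perp[f]$'' is vacuous: elements $\psi\in\V$ are equivalence classes modulo $\mm$-null sets, so every $\psi$ already vanishes $\mm$-a.e.\ on that set. What enters the right-hand side of your limiting inequality is the \emph{quasi-continuous} representative $\widetilde{\psi\,\Gq f}$, which is determined only up to $\cE$-polar sets; since the $\mm$-null carrier of $\Gamma_2^\perp[f]$ may have positive capacity, you cannot force $\widetilde{\psi\,\Gq f}$ to vanish there by choosing $\psi$. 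Hence the singular contribution $\int\widetilde{\psi\,\Gq f}\,\d\Gamma_2^\perp[f]\ge 0$ cannot be removed, and your chain of inequalities does not yield the pointwise bound. (There is also a secondary issue: the convergence $\psi\,\Gq{f_n}\to\psi\,\Gq f$ in $\V$ that you invoke requires control of $\int(\Gq{f_n}-\Gq f)^2\,\Gq\psi$, which is not available for generic $\psi\in\V\cap L^\infty$ without a uniform $L^\infty$ bound on $\Gq{f_n}$.)

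The paper circumvents this by working with square roots: set $g_n:=\sqrt{\gamma_{2,K}[f_n]}$, which are uniformly bounded in $L^2(X,\mm)$ by~\eqref{eq:81}, and pass to a weak $L^2$ limit $g$. The square-root form of~\eqref{eq:57} for $f_n$ survives in the limit as $\sqrt{\Gq{\Gq f}}\le 2g\sqrt{\Gq f}$ via strong $L^2$ convergence of $\Gq{f_n}^{1/2}$ and $\Gq{\Gq{f_n}}^{1/2}$. Separately, weak lower semicontinuity gives $\int g^2\psi\,\d\mm\le\liminf_n\int g_n^2\psi\,\d\mm\le\int\tilde\psi\,\d\Gamma_{2,K}^\star[f]$ for bounded $\psi\in\cVl+$; since $g^2\mm\ll\mm$, comparison of the absolutely continuous parts yields $g^2\le\gamma_{2,K}[f]$ $\mm$-a.e., and combining the two gives~\eqref{eq:57}. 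The key idea you are missing is this decoupling: pass to the limit in the \emph{linear} inequality involving $g_n$, and independently bound $g^2$ by the density via lower semicontinuity, rather than trying to pass to the limit directly in the bilinear product $\gamma_{2,K}[f_n]\,\Gq{f_n}$.
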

\begin{proof}
  Let us denote by $\ell_f\in \V'$ the functional in \eqref{eq:61}. By
  \eqref{eq:11} it is immediate to see that $\ell_f\in \cVl+'$ so that
  we can apply the representation result stated in Proposition \ref{prop:Daniell};
  moreover, the first inequality in \eqref{eq:10} gives
  \begin{eqnarray}
    \label{eq:3}
    \Big|\langle\ell_f,\varphi\rangle\Big|&\le& 
    \|\varphi\|_{\V}\Big(\frac 12\|\Gq f\|_\V+
    \|\Delta f\|_{L^4}\|\Gq f\|_{L²}^{1/2}+\|\Delta f\|^2_{L^4}+|K|\|\Gq f\|_{L^2}\Big)
    \\&\le& C_K \|\varphi\|_{\V}\,\|f-\Delta f\|^2_{L^4}\nonumber
  \end{eqnarray}
  for all $\varphi\in\V$.
  
  In order to prove \eqref{eq:57} and \eqref{eq:81} we apply Lemma~\ref{lem:Approximation2}, 
  obtaining $f_n\in \DG$ converging to $f$ in 
  $\dom_{L^4}(\Delta)$. We first observe that 
  Theorem~\ref{thm:mainquote}, 
  the convergence of 
  $\Delta f_n$ in $L^4(X,\mm)$ and the convergence of
  $\Gq{f_n}$ in $\V$ coming from \eqref{eq:sanvito}, together with
  \eqref{eq:3}, yield
  \begin{equation}
    \label{eq:5}
    \lim_{n\to\infty}\int_X \varphi\,\d\Gamma_{2,K}^*[f_n]  
    =\int_X \varphi\,\d\Gamma_{2,K}^*[f]\quad\forevery \varphi\in \V.
  \end{equation}
  Passing to the limit as $n\to\infty$
  in the inequality (derived from the fact that $f_n$ satisfy \eqref{eq:81})
  $$
  \int_X \varphi\,\d\Gamma_{2,K}^*[f_n]\leq    \int_X \Big(\big(\Delta f_n\big)^2-K\Gq {f_n}\Big)\,\d\mm
  $$
  we obtain the same inequality with $f$ in place of $f_n$; then, \eqref{eq:3} and Proposition~\ref{prop:Daniell}(3) provide \eqref{eq:81} for $f$.
  In addition, we can still use the strong convergence of $\Gq {f_n}$ to $\Gq f$ in $\V$
  to show that
  $\big(\Gq{f_n}\big)^{1/2}$ and 
  $\big(\Gq{\Gq {f_n}}\big)^{1/2}$ converge to 
  $\big(\Gq f\big)^{1/2}$ and to $\big(\Gq{\Gq
    {f}}\big)^{1/2}$ in $L^2(X,\mm)$
  respectively. 
  Since the functions
  $g_n:=(\emeas K{f_n})^{1/2}$ are uniformly bounded in $L^2(X,\mm)$ 
  thanks to \eqref{eq:81}, up to extracting a weakly converging
  subsequence,
  it is not restrictive to assume that $g_n\weakto g$ in $L^2(X,\mm)$
  as $n\to\infty$ so that
  for every essentially bounded $\varphi\in \cVl+$ 
  \begin{equation}
    \label{eq:58}
    \begin{aligned}
      \int_X \big(\Gq{\Gq {f}}\big)^{1/2}\varphi\,\d\mm
      &=\lim_{n\to\infty}\int_X \big(\Gq{\Gq
        {f_n}}\big)^{1/2}\varphi\,\d\mm
      \\&\le 2\lim_{n\to\infty}\int_X g_n
      \Gq{f_n}^{1/2} \varphi\,\d\mm= 2\int_X g\Gq f^{1/2}\varphi\,\d\mm.
    \end{aligned}
  \end{equation}
  On the other hand, for every essentially bounded $\psi\in \cVl+'$ we
  obtain from \eqref{eq:5}
  \begin{equation}
    \label{eq:59}
    \begin{aligned}
      \int_X g^2\psi\,\d\mm&\le \liminf_{n\to\infty}
      \int_X g_n^2\psi\,\d\mm\le 
      \lim_{n\to\infty}
      \int_X \psi\,\d\Gamma_{2,K}^\star[f_n]=
      \int_X \psi\,\d\Gamma_{2,K}^\star[f]
      \\&=\int_X \psi \emeas kf\,\d\mm 
      +\int_X \psi\,\d\Gamma_{2,K}^\perp[f],
    \end{aligned}
  \end{equation}
  so that $g^2\le \emeas kf$ $\mm$-a.e.~in $X$.
  Combining with \eqref{eq:58} and taking the squares we eventually
  get 
  \eqref{eq:57}.
\end{proof}

\section{Metric measure spaces and their localization}\label{sec:LTGComp}

\subsection{Metric measure spaces, weak gradients and Cheeger energy}
\label{subsec:MMS}
We refer to the papers \cite{AGS11a}, \cite{AGS11b}, \cite{AGS12} for the basic facts and terminology on calculus in metric 
measure spaces; we will use the notation $W^{1,2}(X,\sfd,\mm)$ for the
Sobolev space, $\C$ for the Cheeger energy arising from the relaxation
in $L^2(X,\mm)$ 
of the local Lipschitz constant 
\begin{equation}
  \label{eq:20}
  |\rmD f|(x):=\limsup_{y\to x}\frac{|f(y)-f(x)|}{\sfd(y,x)},\quad
  f:X\to \R,
\end{equation}
of Lipschitz maps, $\weakgrad{f}$ for the so-called minimal relaxed gradient.

From now on, we shall denote by $\XX$ the class of metric measure spaces $(X,\sfd,\mm)$ satisfying the following three conditions:
\begin{itemize}
\item[(a)] $(X,\sfd)$ is complete and separable;
\item[(b)] $\mm$ is a nonnegative Borel measure with $\supp(\mm)=X$, satisfying
\begin{equation}\label{eq:grygorian}
\mm(B_r(x))\leq c\,\rme^{Ar^2}
\end{equation}
for suitable constants $c\geq 0$, $A\geq 0$. 
\item[(c)] $(X,\sfd,\mm)$ is infinitesimally Hilbertian according to the terminology introduced in \cite{Gigli12}, i.e., the Cheeger
energy $\C$ is a quadratic form. 
\end{itemize}
As explained in \cite{AGS11b}, \cite{AGS12}, the quadratic form $\C$ canonically 
induces a strongly regular Dirichlet $\cE$
form in $(X,\tau)$, where $\tau$ is the topology induced by $\sfd$. In addition, but this fact is less elementary (see \cite[\S4.3]{AGS11b}), the formula
$$
\Gq f=\weakgrad f^2,\quad
\Gbil{f}{g}=\lim_{\epsilon\downarrow 0}\frac{\weakgrad{(f+\epsilon g)}^2-\weakgrad{f}^2}{2\epsilon}
\quad\qquad f,\,g\in W^{1,2}(X,\sfd,\mm) $$%
(where the limit takes place in $L^1(X,\mm)$) provides an explicit expression  
of the \emph{Carr\'e du Champ} $\Gamma:\dom(\cE)\times \dom(\cE)\to
L^1(X,\mm)$ and 
yields the pointwise
upper estimate
\begin{equation}
  \label{eq:19}
  \Gq f\le |\rmD f|^2\quad\text{$\mm$-a.e.\ in $X$,
    whenever }f\in \Lip(X)\cap L^2(X,\mm),\quad |\rmD f|\in L^2(X,\mm).
\end{equation}
Eventually, \eqref{eq:grygorian} ensures that the 
generated Markov semigroup $(\sfP_t)_{t\ge0}$ is mass-preserving, 
so that \eqref{eq:18} and the formalism of 
Section~\ref{sec:BE} applies to the class of metric measure spaces in
$\XX$; in particular we can identify $W^{1,2}(X,\sfd,\mm)$ with
$\V$. 

The above discussions justify the following natural definition
(equivalent to the
  $\RCD^*(K,N)$ condition, see the next Section~\ref{sec:RCD}).

\begin{definition}[Metric $\BE(K,N)$ condition for metric measure
  spaces]
  \label{def:topo-BE}
  We say that \\
  $(X,\sfd,\mm)\in \XX$ satisfies the
  \emph{metric $\BE(K,N)$ condition}
  if %$(X,\sfd)$ is a length space, 
  the Dirichlet form associated to the Cheeger
  energy of $(X,\sfd,\mm)$ satisfies $\BE(K,N)$ according to
  Definition~\ref{def:BE} and
  any
  \begin{equation}
    \label{eq:26}
    \text{$f\in W^{1,2}(X,\sfd,\mm)\cap L^\infty(X,\mm)$ with
      $\big\|\Gq f\big\|_{L^\infty}\le 1$ has a $1$-Lipschitz representative.}
  \end{equation}
\end{definition}
It is worth noticing that if $(X,\sfd,\mm)$ satisfies the metric
$\BE(K,\infty)$ condition then $\sfd$ coincides with the 
intrinsic distance $\sfd_\cE$ induced by $\cE$ and 
$(X,\sfd)$ is a length space (recall that $(X,\sfd)$ is a length space if
the distance between
two arbitrary points in $X$ is the infimum of the length of the absolutely continuous
curves connecting them). More precisely, the inequality $\sfd_{\cE}\leq \sfd$ is a direct consequence
of \eqref{eq:26}, while the curvature condition is involved in the proof of the converse inequality.

In this section we see how these concepts can be localized, building suitable cutoff functions with good second order regularity 
properties 
and a partition of unity subordinated to an open covering. As an
application, we see how the metric 
$\BE(K,N)$ condition
can, to some extent, be globalized (at least in locally 
compact metric spaces). 

The results of this section could be put in a more
abstract setup, as we did in \S\ref{sec:BE}, assuming the existence of cutoff functions $f$ with $\Gamma(f)\in L^\infty(X,\mm)$ separating sets with
positive distance. 
However, since the results we aim to are relative to metric measure spaces, we prefer to state
them in this setting, 
where, as a simple but useful application of \eqref{eq:19}, we can easily construct
cutoff functions with bounded weak gradient.
To this aim, we consider the distance-functions and 
the corresponding neighbourhoods
\begin{equation}
  \label{eq:31}
  \sfd(x,F):=\inf_{y\in F}\sfd(x,y),\quad
  F^{[h]}:=\big\{x\in X:\sfd(x,F)\le h\big\},\quad \text{for }F\subset
  X,\ x\in X,\ h\ge0,
\end{equation}
and we compose them with a function $\eta\in \Lip_{\rmc}(\R)$ with bounded support, 
so that $\nchi:=\eta\circ \sfd(\cdot,F)$ has bounded support;
it is immediate to see that 
\begin{equation}
  \label{eq:21}
  \nchi=\eta\circ\sfd(\cdot,F)\text{ belongs to $\V$, with}\quad \Gq\nchi
  \le \big|\eta'\big(\sfd(\cdot,F)\big)\big|^2\quad\mm\text{-a.e.~in }X.
\end{equation}
Let us conclude this introductory part
with two simple remarks concerning
proper metric spaces and
and the regularity of the Cheeger
energy.
Recall that a metric space $(X,\sfd)$ is called \emph{proper} if
every closed bounded subset is compact.
\begin{remark}[Proper metric spaces and the length condition]
  \label{rem:proper}
  \upshape
  Every complete and 
  locally compact metric space $(X,\sfd)$ is also
  proper if it satisfies a length condition 
  (see e.g.\ \cite[Prop.~2.5.22]{Burago-Burago-Ivanov01}).
  Properness immediately yields that this
  infimum
  is also attained so that a locally compact length space is proper
  and geodesic.  
  This characterization is well adapted to our situation, since
  every m.m.s.~$(X,\sfd,\mm)$ satisfying the metric
  $\BE(K,N)$ condition is a length space and it is also
  locally compact (thus proper and geodesic) if $N<\infty$.    
\end{remark}
\begin{remark}[Regularity of the Cheeger energy in proper 
  metric spaces]
  \label{rem:Cheeger-regular}
  \upshape
  Recalling Remark \ref{rem:regular}, it is immediate to check
  that the Cheeger energy is a regular (thus a fortiori quasi-regular)
  Dirichlet form whenever $(X,\sfd)$ is proper.
  In fact, it is easy to see that every function with finite
  energy can be approximated in $W^{1,2}(X,\sfd,\mm)$ by functions
  with bounded support (see e.g.~\cite[Lemma 4.11]{AGS11a}) 
  and those functions are limits in $W^{1,2}(X,\sfd,\mm)$ of sequences of 
  Lipschitz functions with bounded support.
  The same approximation property holds for uniform convergence
  and any function in $\rmC_\rmc(X)$ by Arzel\`a-Ascoli Theorem.
\end{remark}

\subsection{Localization of metric measure spaces}
\label{subsec:localization}
In connection with localization-globalization of spaces $(X,\sfd,\mm)\in\XX$, the following properties of the relaxed gradient will be useful
(see \cite[Theorem~4.19]{AGS11b} for the proof).

\begin{proposition}[Localization of relaxed gradients and $\XX$]\label{prop:localK}
For $U\subset X$ open, let us consider the metric measure space $(\bar{U},\sfd,\mm \llcorner \bar{U})$ and let us denote by
$|{\rm D} f|_{w,\bar{U}}$ the minimal relaxed gradient in the new space. Then:
\begin{itemize}
\item[(a)] $f\in W^{1,2}(X,\sfd,\mm)$ implies $f\in
  W^{1,2}(\bar{U},\sfd,\mm\llcorner\bar U)$ and $\weakgrad{f}=|{\rm D} f|_{w,\bar{U}}$ 
$\mm$-a.e. in $U$. Conversely, if $f\in W^{1,2}(\bar{U},\sfd,\mm\llcorner\bar U)$ and $\supp(f)$ has positive distance from $X\setminus U$, 
then $f$ extended with the 0 value to the whole of $X$ belongs to $f\in W^{1,2}(X,\sfd,\mm)$.
\item[(b)] If $\mm(\partial U)=0$ then $(\bar{U},\sfd,\mm\llcorner\bar U)\in\XX$. 
\end{itemize}
\end{proposition}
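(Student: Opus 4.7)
The plan is to establish (a) first and then deduce (b) from (a) together with the hypothesis $\mm(\partial U)=0$.

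For the forward direction of (a), given $f\in W^{1,2}(X,\sfd,\mm)$, I would invoke the relaxation definition of the Cheeger energy to pick Lipschitz functions $f_n:X\to\R$ with $f_n\to f$ in $L^2(X,\mm)$ and $|\rmD f_n|^2\to \weakgrad f^2$ in $L^1(X,\mm)$. The slope $|\rmD (f_n\restr{\bar U})|$ computed intrinsically in $\bar U$ is pointwise dominated by $|\rmD f_n|$ restricted to $\bar U$ (the defining $\limsup$ is taken over a smaller set of points). Hence the restrictions constitute an admissible Lipschitz approximation of $f\restr{\bar U}$ in $\bar U$, so $f\restr{\bar U}\in W^{1,2}(\bar U,\sfd,\mm\llcorner\bar U)$ with $|\rmD f|_{w,\bar U}\le \weakgrad f$ $\mm$-a.e.\ in $\bar U$.

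For the reverse inequality $\weakgrad f\le |\rmD f|_{w,\bar U}$ $\mm$-a.e.\ in $U$, I would fix an arbitrary open $V$ with $\bar V\subset U$, build a Lipschitz cutoff $\chi$ of the form \eqref{eq:21} with $\chi\equiv 1$ on $V$ and $\supp\chi\subset U$, and choose Lipschitz approximants $g_n\to f\restr{\bar U}$ in $L^2(\bar U,\mm\llcorner\bar U)$ with $|\rmD g_n|\to |\rmD f|_{w,\bar U}$ in $L^2$. The products $\chi g_n$, extended by $0$ outside $\bar U$, are Lipschitz on $X$ with local Lipschitz constant bounded by $\chi|\rmD g_n|+|g_n|\,|\rmD\chi|$, and converge in $L^2(X,\mm)$ to the zero-extension of $\chi f$. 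Passing to the limit in the relaxation and using that $\chi\equiv 1$ on $V$ yields $\weakgrad{(\chi f)}\le |\rmD f|_{w,\bar U}$ $\mm$-a.e.\ on $V$; the locality of the minimal weak gradient identifies $\weakgrad{(\chi f)}$ with $\weakgrad f$ on $\{\chi=1\}\supset V$, giving the claim on $V$ and hence on $U$ by arbitrariness. The converse statement of (a) is then immediate: starting from $f\in W^{1,2}(\bar U)$ with $\sfd(\supp f, X\setminus U)>0$, one reruns the cutoff construction with $\chi$ identically $1$ on a neighbourhood of $\supp f$, so that the Leibniz remainder $|g_n|\,|\rmD\chi|$ vanishes where $g_n$ is nontrivial, and the zero-extension of $f$ belongs to $W^{1,2}(X,\sfd,\mm)$.

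For (b), completeness and separability pass to $\bar U$ trivially, and $\mm\llcorner\bar U$ has full support on $\bar U$ because every relatively open neighbourhood of a point $x\in\bar U$ contains a ball of $X$ centred at some $y\in U$, which has positive $\mm$-measure since $\supp\mm=X$. The Gaussian growth \eqref{eq:grygorian} is inherited by restriction. To check infinitesimal Hilbertianity I would establish the pointwise parallelogram identity
\[
|\rmD(f+g)|_{w,\bar U}^2+|\rmD(f-g)|_{w,\bar U}^2=2|\rmD f|_{w,\bar U}^2+2|\rmD g|_{w,\bar U}^2\qquad\mm\text{-a.e.}
\]
for all $f,g\in W^{1,2}(\bar U,\sfd,\mm\llcorner\bar U)$. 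Fix a compact $K\subset U$ and a cutoff $\chi$ as above with $\chi\equiv 1$ on $K$ and $\supp\chi\Subset U$. By the converse of (a), $\chi f,\chi g\in W^{1,2}(X,\sfd,\mm)$, and the infinitesimal Hilbertianity of $X$ gives the pointwise parallelogram identity $\mm$-a.e.\ for $\chi f,\chi g$. On $K$, where $\chi\equiv 1$, the equalities of weak gradients from (a) transfer the identity to $f,g$ themselves. Exhausting $U$ by such compacta $K$ and using $\mm(\partial U)=0$ yields the identity $\mm$-a.e.\ on $\bar U$; integrating gives the quadratic character of the Cheeger energy on $\bar U$. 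The main obstacle throughout is the locality property of the minimal weak gradient used in the reverse inequality of (a), reconciling the globally defined relaxation procedure with the purely local information in $|\rmD f|_{w,\bar U}$; a rigorous treatment typically invokes either the test-plan characterization of weak gradients or a careful analysis of optimal approximating Lipschitz sequences, and is carried out in \cite[Theorem~4.19]{AGS11b}.
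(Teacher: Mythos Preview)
The paper does not supply its own proof of this proposition: immediately before the statement it writes ``(see \cite[Theorem~4.19]{AGS11b} for the proof)'' and gives nothing further. Your outline is a faithful sketch of the standard argument carried out in that reference---restriction for the easy inequality, a Lipschitz cutoff plus locality of the minimal weak gradient for the reverse inequality, and then reading off infinitesimal Hilbertianity of $\bar U$ from the pointwise parallelogram identity via (a)---and you correctly flag that the delicate step (locality of $\weakgrad{\cdot}$) is exactly what \cite[Theorem~4.19]{AGS11b} provides. One small imprecision: in the converse part of (a) you say the remainder $|g_n|\,|\rmD\chi|$ ``vanishes where $g_n$ is nontrivial'', but the approximants $g_n$ need not be supported in $\supp f$; the correct statement is that this term tends to $0$ in $L^2$ because $g_n\to f$ in $L^2$ and $f\,|\rmD\chi|=0$. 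This does not affect the validity of the argument.
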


For $U\subset X$ open, $W^{1,2}_{\rm c}(U,\sfd,\mm)$ will denote the 
subspace of $W^{1,2}(X,\sfd,\mm)$ whose functions have compact support
in $U$. We will similarly consider $\Lip_\rmc(U)$.
We will occasionally identify a measurable function $f:U\to \R$ 
with compact support in $U$ with
its trivial extension $\tilde f$ to $X$ and viceversa. 

We can also introduce the localized versions of Lebesgue and Sobolev spaces on
open subsets of 
$X$. Even if not explicitly assumed, these notions are interesting
when $X$ is locally compact.
\begin{definition}[Local $L^p$ and Sobolev spaces]
  \label{def:local-Sobolev}
  Let $U\subset X$ be open and non-empty and
  let $f:U\to \R$ be a $\mm$-measurable map.
  We say that $f\in L^p_{\rm  loc}(U,\mm)$, $p\in [1,\infty]$, if $f\restr{E}\in L^p(E,\mm\llcorner E)$ for every
  compact subset $E\subset U$.
  We say that $f\in W^{1,2}_{\rm loc}(U,\sfd,\mm)$ if 
  for every compact set $E\subset U$ there exists $f_E\in
  W^{1,2}(X,\sfd,\mm)$ such that $f=f_E$ $\mm$-a.e.~in $E$.
  For every $f,g\in W^{1,2}_{\rm loc}(U,\sfd,\mm)$ we can then define
  $\Gbil fg\in L^1_{\rm loc}(U,\mm)$ by $\Gbil fg\restr E:=\Gbil {f_E}{g_E}$.  
\end{definition}
It is not difficult to check that the above definition is consistent 
thanks to the locality property of $\Gamma$. 
We can also easily check the equivalent characterization in terms
of cutoff function (used for instance in \cite{Gigli-Mondino12}): 
\begin{equation}
f\in W^{1,2}_{\rm loc}(U,\sfd,\mm)\quad\text{iff}\quad
\widetilde{f\nchi}\in W^{1,2}(X,\sfd,\mm)\quad\forevery
\nchi\in \Lip_\rmc(U).\label{eq:62}
\end{equation}
Notice that
if $f\in W^{1,2}_{\rm
    loc}(U,\sfd,\mm)$ and $h\in W^{1,2}_{\rm
    c}(U,\sfd,\mm)$ we have $\widetilde \Gamma(f,h)\in L^1(X,\mm)$ with
  \begin{equation}
    \label{eq:35}
    \widetilde \Gamma(f,h)=\Gbil{\widetilde{f\nchi}}h\quad\text{whenever }\nchi\in \Lip_\rmc(U),\
    \nchi\equiv 1\text{ on }\supp(h).
  \end{equation}

Let us now consider the localization property of the Laplace operator.
\begin{lemma}[Global to local for the Laplacian of compactly
  supported functions]
  \label{rem:locality}
  \ \\Let 
$(X,\sfd,\mm)\in\XX$ and, given an open set $U\subset X$, assume that  $f\in W^{1,2}(X,\sfd,\mm)$ has support with positive 
distance from $X\setminus U$. Then $\Delta f\in L^2(X,\mm)$ if and only if $\Delta_{\bar{U}} f\in L^2(\bar{U},\mm\llcorner {\bar{U}})$.
In addition $\Delta_{\bar{U}} f = \Delta f$ $\mm$-a.e. in $\bar{U}$
and $\supp(\Delta f)\subset\supp(f)$.
\end{lemma}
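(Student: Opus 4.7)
The proof rests on two ingredients already in hand: Proposition~\ref{prop:localK}(a), which lets us identify gradients (and, by polarization, the bilinear form $\Gamma$) of $W^{1,2}$ functions on $X$ and on $\bar U$ at $\mm$-almost every point of $U$, together with trivial extension of $W^{1,2}(\bar U)$-functions having compact support in $U$; and the strong locality of the Dirichlet form, which makes $\Gamma(f,\cdot)$ vanish on the open set where $f$ is (locally) constant.

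The first step is to prove the inclusion $\supp(\Delta f)\subset\supp(f)$ (and the analogous statement $\supp(\Delta_{\bar U} f)\subset\supp(f)$), assuming the respective Laplacian exists in $L^2$. Given any compact $K\subset X\setminus\supp(f)$, one can build Lipschitz bump functions $\varphi_n$ with $\supp(\varphi_n)\subset X\setminus\supp(f)$ concentrating on $K$ (using $\varphi_n=\eta_n\circ \sfd(\cdot,K)$ with $\eta_n$ compactly supported); strong locality of $\cE$ gives $\cE(f,\varphi_n)=0$, hence $\int \varphi_n\,\Delta f\,\d\mm=0$. Letting $n\to\infty$ shows $\Delta f$ vanishes on $K$, and varying $K$ yields the inclusion. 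The same argument, applied with $\cE_{\bar U}$ and test functions in $\Lip_\rmc(\bar U\setminus\supp(f))$, handles $\Delta_{\bar U} f$.

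Next, fix a cutoff $\nchi\in\Lip_\rmc(U)$ with $0\le\nchi\le 1$ and $\nchi\equiv 1$ on a neighbourhood $V$ of $\supp(f)$ with $V\Subset U$; such a $\nchi$ exists thanks to \eqref{eq:21}, since $\supp(f)$ has positive distance from $X\setminus U$. Given any $g\in W^{1,2}(\bar U,\sfd,\mm\llcorner\bar U)$, the function $\nchi g$ has support compactly contained in $U$, so by Proposition~\ref{prop:localK}(a) its trivial extension $\widetilde{\nchi g}$ lies in $W^{1,2}(X,\sfd,\mm)$, and $\Gbil{f}{\nchi g}_{\bar U}=\Gbil{f}{\widetilde{\nchi g}}$ $\mm$-a.e.\ in $U$. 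Decomposing $g=\nchi g+(1-\nchi)g$, strong locality gives $\Gbil{f}{(1-\nchi)g}_{\bar U}=0$ $\mm$-a.e.\ (on $V$ because $(1-\nchi)g\equiv 0$ there, on $\bar U\setminus\supp(f)\supset \bar U\setminus V$ because $f\equiv 0$ on the open set $\bar U\setminus\supp(f)$). Hence
\[
 \cE_{\bar U}(f,g)=\cE_{\bar U}(f,\nchi g)=\cE(f,\widetilde{\nchi g}).
\]
An analogous decomposition works in the $X$-direction: for any $g\in W^{1,2}(X,\sfd,\mm)$, the same strong locality argument gives $\cE(f,(1-\nchi)g)=0$, hence $\cE(f,g)=\cE(f,\nchi g)$. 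Combining the two identities reduces both Laplacian equations to the common expression $\cE(f,\widetilde{\nchi g})$.

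Finally, if $\Delta f\in L^2(X,\mm)$, using $\supp(\Delta f)\subset\supp(f)\subset\{\nchi=1\}$ we get
\[
 \cE_{\bar U}(f,g)=\cE(f,\widetilde{\nchi g})=-\int_X\widetilde{\nchi g}\,\Delta f\,\d\mm=-\int_{\bar U} g\,\Delta f\,\d\mm,
\]
identifying $\Delta_{\bar U} f=\Delta f\restr{\bar U}\in L^2(\bar U,\mm\llcorner\bar U)$. Conversely, if $\Delta_{\bar U} f\in L^2(\bar U,\mm\llcorner\bar U)$, let $\tilde h$ be the trivial extension of $\Delta_{\bar U} f$ to $X$; since $\supp(\Delta_{\bar U} f)\subset\supp(f)\subset U$ the extension is consistent, and for $g\in W^{1,2}(X,\sfd,\mm)$,
\[
 \cE(f,g)=\cE(f,\nchi g)=\cE_{\bar U}(f,\nchi g\restr{\bar U})=-\int_{\bar U}\nchi g\,\Delta_{\bar U} f\,\d\mm=-\int_X g\,\tilde h\,\d\mm,
\]
so $\Delta f=\tilde h\in L^2(X,\mm)$. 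The main technical care is in the support/locality bookkeeping: ensuring that $\nchi\equiv 1$ where the Laplacian lives and that $(1-\nchi)g$-contributions vanish in both Dirichlet forms, which together with the identification of the two $\Gamma$'s on compactly-supported-in-$U$ functions is the only nontrivial point.
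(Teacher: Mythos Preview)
Your proof is correct and follows essentially the same route as the paper: both use a Lipschitz cutoff $\nchi$ equal to $1$ near $\supp(f)$, invoke Proposition~\ref{prop:localK}(a) to pass between $W^{1,2}(X)$ and $W^{1,2}(\bar U)$ for functions compactly supported in $U$, and exploit strong locality of $\Gamma$ to discard the $(1-\nchi)g$ contribution. Your version is more explicit (you carry out both implications and the support inclusion for $\Delta_{\bar U}f$ rather than saying ``similar''), and your argument for $\supp(\Delta f)\subset\supp(f)$ via bump functions on compacts $K$ is a minor variant of the paper's density-of-$\Lip_\rmc(G)$-in-$L^2(G)$ argument; just note that ``$\int\varphi_n\,\Delta f\,\d\mm=0$ for bump functions concentrating on $K$'' really gives $\int_K\Delta f\,\d\mm=0$, and one then concludes $\Delta f=0$ $\mm$-a.e.\ on $X\setminus\supp(f)$ by varying $K$ over compacts approximating $\{\pm\Delta f>0\}$ from within.
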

\begin{proof}
Let us assume that $\Delta f\in L^2(X,\mm)$. First of all, choosing
Lipschitz functions $g$  in \eqref{eq:defDelta} with compact support in $G:=X\setminus\supp(f)$ 
(these functions are dense in $L^2(G,\mm\llcorner G)$ by a simple
truncation argument)  we see that
$\Delta f=0$ $\mm$-a.e. in $X\setminus\supp(f)$, i.e. $\supp(\Delta f)\subset\supp(f)$.
Now, for every $\psi \in W^{1,2}(\bar{U},\sfd,\mm\llcorner{\bar{U}})$ we can apply Proposition~\ref{prop:localK} and a multiplication by
a cutoff function as in \eqref{eq:21} with $F=\supp(f)$  
to find another function $\tilde\psi\in W^{1,2}(X,\sfd,\mm)$ coinciding with $\psi$ in a neighbourhood of $\supp(f)$;
thanks to the locality of $\Gamma$ we have then
$$\int_{\bar{U}} \Gamma(\psi,f) \, \d \mm= \int_X \Gamma(\tilde\psi,f)\, \d \mm = - \int_X \tilde\psi\,\Delta f  \, \d \mm =  -\int_{\bar{U}} \psi\,\Delta f \, \d \mm. $$
The proof of the converse implication is similar.
\end{proof}

\begin{lemma}[Construction of smoother cutoff functions]\label{lem:cutoff}
Let $(X,\sfd,\mm)\in\XX$ and let $U\subset X$ be an open subset such
that $(\bar{U},\sfd,\mm\llcorner{\bar{U}}) \in \XX$ satisfies the metric $\BE(K,\infty)$ condition. 

Then, for all $E\subset U$ compact and $G\subset X$ open
and relatively compact with $E\subset G$ and $\bar{G}\subset U$
there exists a Lipschitz function $\hat\nchi:X\to \R$ satisfying:
\begin{itemize}
\item[(i)] $0\le \hat\nchi\le 1$,
  $\hat\nchi\equiv 1$ on a 
  neighbourhood $E^{[h]}$ of $E$ for some $h>0$, and $\supp(\hat\nchi) \subset G$;
\item[(ii)] $\Delta\hat\nchi \in L^\infty(X,\mm)$ and 
  $\Gq{\hat\nchi}\in W^{1,2}(X,\sfd,\mm)$.
\end{itemize}
\end{lemma}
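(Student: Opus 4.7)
My plan is to build $\hat\nchi$ inside $(\bar U,\sfd,\mm\llcorner\bar U)$, where $\BE(K,\infty)$ is available, and then extend it by zero to $X$. Proposition \ref{prop:localK} together with Lemma \ref{rem:locality} will ensure that this extension lies in $W^{1,2}(X,\sfd,\mm)$ with the same Laplacian, since $\hat\nchi$ is going to be compactly supported inside $G$, at positive distance from $X\setminus U$.

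Using the compactness of $E$ and the inclusions $E\subset G\subset\bar G\subset U$, I would fix $0<h_1<h_2<h_3$ with $E^{[h_3]}\subset G$, and set $F_i:=E^{[h_i]}$. Start from the Lipschitz function $\phi_0:=\eta_0\circ\sfd(\cdot,E)$, where $\eta_0:\R\to[0,2]$ is smooth with $\eta_0\equiv 2$ on $(-\infty,h_1]$ and $\eta_0\equiv 0$ on $[h_2,\infty)$: by \eqref{eq:21} we have $\Gamma(\phi_0)\in L^\infty$ and $\supp\phi_0\subset F_2$. Next regularize it via the mollified heat flow on $\bar U$, setting $\psi_\eps:=\mathfrak H^\eps\phi_0$ as in \eqref{eq:55}. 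Integrating by parts against the kernel $\kappa$ as in the proof of Corollary \ref{cor:weaker-assumption-G2} yields $\Delta_{\bar U}\psi_\eps\in L^\infty$, and Theorem \ref{thm:interpolation} then gives $\Gamma(\psi_\eps)\in L^\infty$. The pointwise gradient estimate \eqref{eq:77} makes the family $\{\psi_\eps\}_{\eps>0}$ equi-Lipschitz, so the strong $L^2$-convergence $\psi_\eps\to\phi_0$ upgrades to uniform convergence on compact subsets of $\bar U$.

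To cut $\psi_\eps$ sharply to $0$ and $1$, I would fix a smooth $\eta:\R\to[0,1]$ with $\eta\equiv 0$ on $(-\infty,1/2]$ and $\eta\equiv 1$ on $[3/2,\infty)$, and set $\hat\nchi:=\eta\circ\psi_\eps$. The chain rule \eqref{eq:44} gives $\Delta\hat\nchi=\eta'(\psi_\eps)\Delta\psi_\eps+\eta''(\psi_\eps)\Gamma(\psi_\eps)\in L^\infty$ and $\Gamma(\hat\nchi)=\eta'(\psi_\eps)^2\Gamma(\psi_\eps)\in L^\infty$. For $\eps$ small, uniform convergence on $F_2$ forces $\psi_\eps\ge 3/2$ on $F_1$, hence $\hat\nchi\equiv 1$ on the neighbourhood $F_1$ of $E$. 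Once compact support in $G$ is secured, local finiteness of $\mm$ from \eqref{eq:grygorian} yields $\hat\nchi,\Delta\hat\nchi\in L^4$, and Theorem \ref{thm:crucial} upgrades $\Gamma(\hat\nchi)$ to an element of $W^{1,2}$.

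The main obstacle will be securing $\supp\hat\nchi\subset G$: this demands $\psi_\eps\le 1/2$ on $\bar U\setminus F_3$, i.e.\ a \emph{uniform pointwise} decay of the mollified heat flow off the support of $\phi_0$ as $\eps\downarrow 0$, which does not follow from $L^2$- or locally-uniform convergence alone. I expect to handle this via the heat kernel upper bounds available under $\BE(K,\infty)$, which force $\sfP^{\bar U}_r\phi_0(x)\to 0$ uniformly on $\{\sfd(\cdot,F_2)\ge h_3-h_2\}$ as $r\downarrow 0$; alternatively one could iterate the composition or insert a second multiplicative correction outside $F_3$, at the cost of a more involved argument. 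This quantitative off-support control is the delicate analytic point where the Bakry-\'Emery structure will have to be exploited in an essential way.
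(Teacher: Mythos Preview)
Your plan is the paper's proof: Lipschitz cutoff, mollified heat flow on $\bar U$, composition with a smooth $\eta$, trivial extension to $X$. The only divergence is your last paragraph, where you overestimate the difficulty of securing $\supp\hat\nchi\subset G$ and reach for heat-kernel bounds that are not needed.

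You do \emph{not} need $\psi_\eps\le 1/2$ on all of $\bar U\setminus F_3$. You already have uniform convergence of $\psi_\eps$ to $\phi_0$ on compact subsets of $\bar U$; apply it on the compact set $\bar G$. Since $\phi_0\equiv 0$ on $\bar G\setminus F_2$, for small $\eps$ you get $\psi_\eps\le 1/2$ on $\bar G\setminus F_2$, hence $\eta\circ\psi_\eps=0$ there. Now set $\hat\nchi:=\eta\circ\psi_\eps$ on $\bar G$ and $\hat\nchi:=0$ on $X\setminus\bar G$; this is the paper's move. To see that the truncation preserves $\Delta_{\bar U}\hat\nchi\in L^\infty$, take any Lipschitz $\zeta$ as in \eqref{eq:21} with $\zeta\equiv 1$ on a neighbourhood of $F_2$ and $\supp\zeta\subset F_3$: then $\hat\nchi=(\eta\circ\psi_\eps)\,\zeta$ on all of $\bar U$, and in the Leibniz expansion of $\Gamma\big((\eta\circ\psi_\eps)\zeta,\cdot\big)$ every contribution carrying $\Gamma(\zeta,\cdot)$ or $\Gamma(\eta\circ\psi_\eps,\zeta)$ is supported where $\zeta$ is non-constant, hence inside $\bar G\setminus F_2$, where $\eta\circ\psi_\eps\equiv 0$; by locality those terms vanish, leaving $\Delta_{\bar U}\hat\nchi=\zeta\,\Delta_{\bar U}(\eta\circ\psi_\eps)\in L^\infty(\bar U,\mm)$ with $\supp\hat\nchi\subset F_2\subset G$. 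Lemma~\ref{rem:locality}, Proposition~\ref{prop:localK} and Theorem~\ref{thm:crucial} then finish exactly as you outlined. No quantitative off-support heat decay is required.
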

\begin{proof}
Notice that the point is to construct a cut off function with
$L^\infty$ Laplacian, indeed  the existence of a Lipschitz cut off
function $f:X\to [0,1]$  
satisfying (i)
is trivial by \eqref{eq:21}, 
since $\eps:=\inf_{x\not\in G}\sfd(x,E)>0$;
we can thus suppose that 
$f\equiv 1$ on $E^{[\eps/3]}$ and  $\supp(f)\subset E^{[2\eps/3]}$.

At first we regularize $f$ via the mollified heat flow  ${\mathfrak
  H}_U^t$ of the m.m.s. $(\bar{U},\sfd,\mm\llcorner{\bar{U}})$ defined
as in \eqref{eq:55} 
by setting
$$f_t:= {\mathfrak H}_U^t f.$$
Since by assumption $(\bar{U},\sfd,\mm\llcorner{\bar{U}})$ satisfies 
the metric $\BE(K,\infty)$ condition,
by the pointwise gradient estimate \eqref{eq:77} together with the
maximum principle and \eqref{eq:26}, we know that $\{f_t\}_{t\in [0,\delta]}$ are uniformly Lipschitz
on $\bar{U}$ for any $\delta>0$, and moreover $\Delta f_t \in
L^\infty(\bar{U},\mm\llcorner{\bar{U}})$. Combining  Arzel\`a-Ascoli
theorem and the fact that $f_t\to f$ in $L^2(\bar{U},\mm)$ as $t\to 0$
it follows that $f_t\to f$ uniformly on $\bar{G}$. Therefore,
recalling also the maximum principle for the heat flow, for $\delta>0$
small enough we have
\begin{equation}
\frac{3}{4}\leq f_\delta \leq 1 \text{ on } E^{[\eps/3]} \quad  \text{ and }
\quad 0 \leq f_\delta \leq \frac{1}{4} \text{ on } \bar{G}\setminus E^{[2\eps/3]}.
\end{equation} 
Now  let $\eta \in C^2([0,1],[0,1])$ be such that $\eta([0,1/4])=\{0\}$ and $\eta([3/4,1])=\{1\}$. It is immediate to check,
using Lemma~\ref{rem:locality}, Proposition~\ref{prop:localK} 
and Theorem \ref{thm:crucial} (applied to the 
m.m.s.~$(\bar U,\sfd,\mm\llcorner\bar U)$) 
that the trivial extension of $\hat\nchi:=\eta\circ f_\delta$ outside
$\bar G$ has the desired properties with $h:=\eps/3$.
\end{proof}

\begin{remark}
  \label{rem:loc-comp}
  \upshape
  Notice that whenever $E\subset U$, with $E$ compact and $U$ open and locally compact, 
  we can always find an open and relatively compact
  neighbourhood $G$ of $E$ as in the above Lemma: since for every
  $x\in E$ there exists a relatively compact open ball $B_x$ with
  $\overline{B_x}\subset U$, it is sufficient to set $G:=\cup_{x\in
    E_0} B_x$ where $E_0\subset E$ is a finite set such that
  $(B_{x})_{x\in E_0}$ is an open cover of $E$.
\end{remark}
The lemma above easily provides the following proposition, stating the existence of a regular partition of unity.
\begin{proposition}[Partition of unity]\label{pro:PartOf1}
Let $(X,\sfd,\mm)\in\XX$, $E\subset X$ compact and let  
$U=\cup_{i\in I} U_i$ where $\{U_i\}_{i\in I}$ is a covering of $E$ by non-empty locally compact 
open subsets such that the m.m.s. $(\bar{U}_i,\sfd,\mm\llcorner{\bar{U}_i})\in \XX$  satisfy the
metric 
$\BE(K,\infty)$ condition. Then there exist Lipschitz functions $\nchi_i:X \to[0,\infty)$, null for all but finitely many $i$ and
satisfying:
\begin{itemize}
\item[(i)] $\supp(\nchi_i) \subset U_i$ is compact 
 and $\sum_i \nchi_i \equiv 1$ on a neighbourhood of $E$; 
\item[(ii)] $\Delta \nchi_i \in L^\infty(X,\mm)$ and  
  $\Gq {\nchi_i}\in W^{1,2}(X,\sfd,\mm)$. 
\end{itemize} 

In particular $\psi:=\sum_i \nchi_i$ satisfies $\psi\equiv 1$ on $E$ and
\begin{equation}
  \label{eq:32}
  \psi\in \Lip_\rmc(U),\quad
  0\le \psi\le 1,\quad
  \Delta \psi\in L^\infty(X,\mm),\quad
  \Gq{\psi}\in W^{1,2}(X,\sfd,\mm).
\end{equation}
\end{proposition}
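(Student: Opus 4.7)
The strategy is to extract a finite subcover of $E$, build a cutoff adapted to each element of it via Lemma~\ref{lem:cutoff}, and combine the cutoffs into a partition of unity via the classical nested-product formula.

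First I would use the compactness of $E$ to reduce to a finite subcover $U_{i_1},\dots,U_{i_N}$, setting $\nchi_i:=0$ for all other indices. By the local compactness of each $U_{i_k}$ (cf.~Remark~\ref{rem:loc-comp}), covering $E$ by finitely many relatively compact balls whose closures lie inside some $U_{i_k}$ and grouping them by index, I would produce open relatively compact sets $V_k\subset G_k$ with $\overline{V_k}\subset G_k$, $\overline{G_k}\subset U_{i_k}$, and $E\subset\bigcup_{k=1}^N V_k$. Applying Lemma~\ref{lem:cutoff} in the m.m.s.\ $(\bar U_{i_k},\sfd,\mm\llcorner\bar U_{i_k})$ (which satisfies the metric $\BE(K,\infty)$ condition by hypothesis) with these data would then yield a Lipschitz function $\hat\nchi_k:X\to[0,1]$ equal to $1$ on a neighbourhood of $\overline{V_k}$, supported in $G_k$, with $\Delta\hat\nchi_k\in L^\infty(X,\mm)$ and $\Gq{\hat\nchi_k}\in W^{1,2}(X,\sfd,\mm)$; the Lipschitz property combined with \eqref{eq:19} also gives $\Gq{\hat\nchi_k}\in L^\infty(X,\mm)$.

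Then I would set
$$\nchi_{i_k}:=\hat\nchi_k\prod_{j<k}(1-\hat\nchi_j)\quad(k=1,\dots,N),$$
and use the telescoping identity $\sum_k\nchi_{i_k}=1-\prod_k(1-\hat\nchi_k)$ to see that this sum equals $1$ on the open set $\bigcup_k\{\hat\nchi_k=1\}\supset\bigcup_k V_k\supset E$, while each $\nchi_{i_k}$ is nonnegative, Lipschitz, and compactly supported in $G_k\subset U_{i_k}$, establishing (i). For (ii), since $\nchi_{i_k}$ is a polynomial in $\hat\nchi_1,\dots,\hat\nchi_k$, iterating Leibniz's formula~\eqref{eq:43} together with the Cauchy--Schwarz bound $|\Gamma(f,g)|\le\Gq f^{1/2}\Gq g^{1/2}$, and using the $L^\infty$-bounds on $\hat\nchi_j$, $\Gq{\hat\nchi_j}$, $\Delta\hat\nchi_j$, would yield $\Delta\nchi_{i_k}\in L^\infty(X,\mm)$. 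Since $\supp(\nchi_{i_k})\subset G_k$ has positive distance from $X\setminus U_{i_k}$, by Lemma~\ref{rem:locality} and Proposition~\ref{prop:localK}(a) I could view $\nchi_{i_k}$ as a compactly supported element of $W^{1,2}(\bar U_{i_k},\sfd,\mm\llcorner\bar U_{i_k})$ with the same Laplacian; Theorem~\ref{thm:crucial} applied in $\bar U_{i_k}$ (where $\BE(K,\infty)$ holds and $\nchi_{i_k}\in\dom_{L^4}(\Delta)$ by compact support) would then deliver $\Gq{\nchi_{i_k}}\in W^{1,2}(\bar U_{i_k},\sfd,\mm\llcorner\bar U_{i_k})$, which extends to $W^{1,2}(X,\sfd,\mm)$ by the trivial-extension clause of Proposition~\ref{prop:localK}(a). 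The final claim~\eqref{eq:32} on $\psi$ then follows by linearity (for $\Delta\psi$) and by the same reasoning applied to the polynomial expression $\psi=1-\prod_k(1-\hat\nchi_k)$ (for $\Gq\psi$).

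The main obstacle is the bookkeeping between global quantities on $X$, where no $\BE$ condition is assumed, and local quantities on each $\bar U_{i_k}$, where $\BE(K,\infty)$ and the calculus results of Sections~\ref{sec:equiv}--\ref{sec:further-regularity} apply. This is resolved by confining the supports of all relevant cutoff products to a single $U_{i_k}$ at each step and invoking the compact-support localization of the Laplacian (Lemma~\ref{rem:locality}) and of the relaxed gradient (Proposition~\ref{prop:localK}(a)).
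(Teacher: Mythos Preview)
Your construction via the nested-product formula $\nchi_{i_k}=\hat\nchi_k\prod_{j<k}(1-\hat\nchi_j)$ is a valid alternative to the paper's approach, which instead normalizes by $\nchi_i:=\hat\nchi_i/\eta(\sum_j\hat\nchi_j)$ with $\eta$ a smooth nondecreasing function equal to $s$ on $[1,\infty)$ and to $1/2$ on $[0,1/2]$. Your route makes $0\le\psi\le 1$ immediate from the factored form $\psi=1-\prod_k(1-\hat\nchi_k)$, and your argument for $\Gq{\nchi_{i_k}}\in W^{1,2}$ via Theorem~\ref{thm:crucial} applied in $\bar U_{i_k}$ (using that $\supp(\nchi_{i_k})\subset G_k\subset U_{i_k}$) is more explicit than the paper's appeal to ``Leibniz and chain rule''. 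Both constructions rely on Lemma~\ref{lem:cutoff} in exactly the same way.

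There is, however, a gap in your last sentence. You claim $\Gq\psi\in W^{1,2}$ follows ``by the same reasoning applied to the polynomial expression $\psi=1-\prod_k(1-\hat\nchi_k)$'', but that reasoning required confining the support to a single $U_{i_k}$ so that Theorem~\ref{thm:crucial} could be invoked in the $\BE(K,\infty)$ space $\bar U_{i_k}$; since $\supp(\psi)=\bigcup_k G_k$ need not lie in any one $U_{i_k}$, this does not go through. The fix---essentially what the paper indicates via polarization and Proposition~\ref{prop:localK}---is to expand $\Gq\psi=\sum_{k,l}\Gbil{\nchi_{i_k}}{\nchi_{i_l}}$, note that by locality each term is supported in $\supp(\nchi_{i_k})\subset G_k$, and then, choosing a further cutoff $\zeta$ from Lemma~\ref{lem:cutoff} with $\zeta\equiv 1$ on $G_k$ and $\supp(\zeta)\subset U_{i_k}$, replace $\nchi_{i_l}$ by $\zeta\nchi_{i_l}$ (which leaves $\Gbil{\nchi_{i_k}}{\cdot}$ unchanged) so that both arguments lie in $\dom_{L^4}(\Delta_{\bar U_{i_k}})$ and the bilinear estimate of Theorem~\ref{thm:crucial} in $\bar U_{i_k}$ gives $\Gbil{\nchi_{i_k}}{\nchi_{i_l}}\in W^{1,2}$.
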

\begin{proof} By the compactness of $ E$ we can assume with no
  loss of generality that $I$ is finite. Since the continuous function
  \begin{displaymath}
    E\ni x\mapsto 
    \max_{i\in I}\sfd(x,X\setminus U_i)
  \end{displaymath}
  has a minimum $a>0$ in $E$, 
  considering the sets
  $$
  E_i:=\left\{x\in E:\ {\rm dist}(x,X\setminus U_i)\geq 
    a/2\right\},\qquad
  G_i:=\left\{x\in X:\ {\rm dist}(x,E_i)< b\right\}
$$
for $b>0$ sufficiently small, we provide compact sets
$E_i\subset G_i$ and open 
relatively compact (by Remark~\ref{rem:loc-comp}) sets $G_i\subset X$ with
$E_i \subset G_i$ and $\bar{G}_i\subset U_i$, in such a way that
$\cup_{i\in I}E_i=E$ and 
the neighbourhood $E^{[h]}$ of $E$ is contained in $\cup_i G_i$ for
$h<b$. 

With this choice of $E_i$ and $G_i$, if we consider 
the cutoff functions $\hat{\nchi}_i$ provided by
Lemma~\ref{lem:cutoff}, we clearly have $\sum_i \hat{\nchi}_i \geq 1$
on $E^{[h]}$ for some positive $h<b$ sufficiently small. 
By Leibniz and chain rule, it is therefore clear that the functions
$$\nchi_i:=\frac{\hat{\nchi}_i}{\eta(\sum_i\hat{\nchi}_i)}=2\hat{\nchi}_i+\bigl(
\frac{1}{\eta(\sum_i\hat{\nchi}_i)}-2\bigr)\hat\nchi_i
$$
satisfy (i) and (ii) above, provided that we choose a smooth nondecreasing function $\eta(s)$ identically equal to $s$ on $[1,\infty)$ and
identically equal to $1/2$ on $[0,1/2]$.

In order to prove the regularity for $\Gq \psi$ of \eqref{eq:32} it is
sufficient to recall Proposition \ref{prop:localK} and 
\begin{equation}
  \label{eq:37}
  \Gq{\sum_i
    \nchi_i}=\sum_{i,\,j}\Gbil{\nchi_i}{\nchi_j},\qquad
  \Gbil{\nchi_i}{\nchi_j}=\frac 14\Gq{\nchi_i+\nchi_j}-
  \frac 14\Gq {\nchi_i-\nchi_j}.\qedhere
\end{equation}
\end{proof}
\begin{remark}
  \label{rem:trivial}
  \upshape
  Let $U=\cup_{i\in I}U_i$ as in the previous Proposition~\ref{pro:PartOf1}.
  Then a measurable function $f:U\to \R$ belongs to $W^{1,2}_{\rm
    loc}(U,\sfd,\mm)$ if and only if $\widetilde{f\psi}\in W^{1,2}(X,\sfd,\mm)$
  for every function $\psi$ as in \eqref{eq:32}. Thus we can 
  use better cutoff functions in \eqref{eq:62}.
\end{remark}

\noindent
We can now use the partitions of unity to prove a
local higher integrability and regularity of $\Gamma$. 

\begin{lemma}[Improved local integrability and regularity of $\Gamma(f)$]\label{lem:ImprInt}
Let $(X,\sfd,\mm)$ be a locally compact m.m.s.~in $\XX$ and 
let $X=\cup_{i\in I}U_i$ where 
$\{U_i\}_{i\in I}$ are non-empty open 
subsets such that 
 $(\bar{U}_i,\sfd,\mm\llcorner{\bar{U}_i})\in \XX$ satisfy the metric 
$\BE(K,\infty)$ condition for all $i\in I$. 

Then for every $f\in\dom_{L^4}(\Delta)\cap L^\infty(X,\mm)$ one has  
$\Gq{f\psi}\in W^{1,2}(X,\sfd,\mm)$
and $f\psi\in D_{L^4}(\Delta)$  for every 
cutoff function $\psi$ satisfying \eqref{eq:32}.
In particular, 
$\Gamma(f) \in W^{1,2}_{\rm loc}(X,\sfd,\mm)$.
\end{lemma}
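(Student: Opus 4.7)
The plan is to localize $f\psi$ via a partition of unity subordinate to the cover $\{U_i\}_{i\in I}$, then apply the interpolation estimate of Theorem~\ref{thm:interpolation} to bootstrap to a local $L^2$-integrability of $\Gq f$, and finally invoke the regularity estimate of Theorem~\ref{thm:crucial} piecewise on each $\bar U_i$. Lemma~\ref{rem:locality} and Proposition~\ref{prop:localK}(a) are what allow us to move between the global Laplacian on $X$ and the subspace Laplacians on the $\bar U_i$'s, since the relevant functions will have compact support in a single piece $U_i$. More concretely, since $E:=\supp(\psi)$ is compact and $X$ is locally compact, finitely many sets $U_{i_1},\ldots,U_{i_n}$ cover $E$; Proposition~\ref{pro:PartOf1} then furnishes Lipschitz cutoffs $\nchi_j$ with $\supp(\nchi_j)\subset U_{i_j}$, $\sum_j\nchi_j\equiv 1$ on a neighbourhood of $E$, $\Delta\nchi_j\in L^\infty(X,\mm)$ and $\Gq{\nchi_j}\in W^{1,2}(X,\sfd,\mm)$. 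Setting $\xi_j:=\psi\nchi_j$ and $g_j:=f\xi_j$, we obtain $f\psi=\sum_j g_j$ with each $g_j$ compactly supported in $U_{i_j}$, and one checks that $\xi_j$ inherits the properties \eqref{eq:32} together with $\Gq{\xi_j}\in L^\infty(X,\mm)$ (being Lipschitz with bounded support).

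The bootstrap step consists in showing $\Gq f\in L^2_{\mathrm{loc}}(X,\mm)$. Given any cutoff $\eta\in\Lip_\rmc(U_{i_j})$ with $\Delta\eta\in L^\infty$ and $\Gq\eta\in L^\infty$ (produced by Lemma~\ref{lem:cutoff}), the Leibniz rule \eqref{eq:43}, combined with the Cauchy--Schwarz estimate $|\Gbil f\eta|\le\sqrt{\Gq f\,\Gq\eta}$ and with $\Gq f\in L^1$, shows $\Delta(f\eta)\in L^2(X,\mm)$ with compact support in $U_{i_j}$. By Lemma~\ref{rem:locality} this Laplacian coincides with $\Delta_{\bar U_{i_j}}(f\eta)$, so Theorem~\ref{thm:interpolation} applied with $p=2$ in $(\bar U_{i_j},\sfd,\mm\llcorner\bar U_{i_j})$ (which satisfies $\BE(K,\infty)$) yields $\Gq{f\eta}\in L^2$. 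Varying $\eta$ and invoking Proposition~\ref{prop:localK}(a) gives the claim.

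For each $g_j$, the Leibniz rule yields
\begin{equation*}
  \Delta g_j=f\Delta\xi_j+\xi_j\Delta f+2\Gbil f{\xi_j};
\end{equation*}
the first summand is bounded with compact support, the second is in $L^4$ since $\Delta f\in L^4$ and $\xi_j$ is bounded, and the third is dominated by $\Gq{\xi_j}^{1/2}\Gq f^{1/2}\in L^4$ thanks to the bootstrap. Hence $\Delta g_j\in L^4$ with compact support, Lemma~\ref{rem:locality} places $g_j$ in $\dom_{L^4}(\Delta_{\bar U_{i_j}})$, and Theorem~\ref{thm:crucial} applied on $\bar U_{i_j}$ gives $\Gq{g_j}\in W^{1,2}(\bar U_{i_j},\sfd,\mm\llcorner\bar U_{i_j})$, which Proposition~\ref{prop:localK}(a) lifts to $\Gq{g_j}\in W^{1,2}(X,\sfd,\mm)$. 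Summing and polarizing via $\Gbil{g_i}{g_j}=\tfrac14\bigl(\Gq{g_i+g_j}-\Gq{g_i-g_j}\bigr)$ delivers $\Gq{f\psi}\in W^{1,2}(X,\sfd,\mm)$ and $\Delta(f\psi)=\sum_j\Delta g_j\in L^4$, so $f\psi\in\dom_{L^4}(\Delta)$. The local statement $\Gq f\in W^{1,2}_{\mathrm{loc}}(X,\sfd,\mm)$ then follows from Remark~\ref{rem:trivial} by choosing $\psi\equiv 1$ around any prescribed compact set. The main obstacle is precisely the bootstrap: the local $\BE(K,\infty)$ assumption becomes useful only once the function under consideration is known to lie in $\dom_{L^p}(\Delta_{\bar U_i})$, but the Leibniz cross term $\Gbil f\xi$ requires first-order control on $f$ that is not a priori available, forcing the $p=2$ interpolation step before the $p=4$ regularity of Theorem~\ref{thm:crucial} can be invoked.
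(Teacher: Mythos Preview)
Your overall architecture is right and mirrors the paper's: localize via a partition of unity, bootstrap $\Gq f\in L^2_{\rm loc}$ with Theorem~\ref{thm:interpolation} at $p=2$, then feed each localized piece into Theorem~\ref{thm:crucial} on the corresponding $\bar U_{i_j}$. The bootstrap step and the argument that each $g_j\in\dom_{L^4}(\Delta_{\bar U_{i_j}})$ are fine.

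The gap is in the cross terms. From $\Gq{g_j}\in W^{1,2}(X,\sfd,\mm)$ for each $j$, the polarization identity
\[
\Gbil{g_i}{g_j}=\tfrac14\bigl(\Gq{g_i+g_j}-\Gq{g_i-g_j}\bigr)
\]
does \emph{not} yield $\Gbil{g_i}{g_j}\in W^{1,2}$: you would need $\Gq{g_i\pm g_j}\in W^{1,2}$, but $g_i\pm g_j$ has support in $U_{i_i}\cup U_{i_j}$, which is not contained in a single piece, so Theorem~\ref{thm:crucial} cannot be invoked for it. (Knowing only $\Gq{g_i},\Gq{g_j}\in W^{1,2}$ gives you the \emph{sum} $\Gq{g_i+g_j}+\Gq{g_i-g_j}=2\Gq{g_i}+2\Gq{g_j}$ in $W^{1,2}$, not the difference.)

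The paper repairs exactly this point by carrying a \emph{second} layer of cutoffs $\hat\nchi_i$ from Lemma~\ref{lem:cutoff}, with $\hat\nchi_i\equiv 1$ on a neighbourhood of $\supp(\nchi_i)$ and $\supp(\hat\nchi_i)\subset U_i$ compact. Strong locality of $\Gamma$ then gives $\Gbil{g_i}{g_j}=\Gbil{g_i}{g_j\hat\nchi_i}$ (since $g_j(1-\hat\nchi_i)$ vanishes on a neighbourhood of $\supp(g_i)$), and now \emph{both} arguments have compact support in the single piece $U_{i_i}$; one checks $g_j\hat\nchi_i\in\dom_{L^4}(\Delta_{\bar U_{i_i}})$ by the same Leibniz/bootstrap computation you already did, and the bilinear estimate of Theorem~\ref{thm:crucial} on $\bar U_{i_i}$ gives $\Gbil{g_i}{g_j}\in W^{1,2}$. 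Insert this step and your argument closes.
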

\begin{proof} 
  Let us consider the compact set $E:=\supp(\psi)$ and 
  let $\{\nchi_i\}$, $I=\{1,\ldots,n\}$,
  be the partition of unity, subordinated to $E$ 
  and to the open covering $\{U_i\}$, constructed in
Proposition~\ref{pro:PartOf1};  
let $\hat\nchi_i$ be a cutoff function provided by Lemma~\ref{lem:cutoff} corresponding to the compact set $\supp(\nchi_i)$,
the open set $U_i$ and $G_i$ as 
in Remark~\ref{rem:loc-comp}.
We define
\begin{equation}
  \label{eq:34}
  \hat f_i:=\hat\nchi_i f,\quad
  f_i:=\nchi_i f,\quad \text{so that}\quad
  f_i=\nchi_i\,\hat f_i.
\end{equation}
Recalling Lemma~\ref{rem:locality}  and the Leibniz formula \eqref{eq:43},
it is easy to check that  $\hat f_i\in L^\infty\cap W^{1,2}(\bar{U}_i,\sfd,
\mm\llcorner{\bar{U}_i})$ and that $\Delta \hat f_i \in L^2(\bar{U}_i,
\mm\llcorner{\bar{U}_i})$. 
Since by assumption the m.m.s. $(\bar{U}_i,\sfd,
\mm\llcorner{\bar{U}_i})$ satisfies the $\BE(K,\infty)$ condition, by
the gradient interpolation Theorem \ref{thm:interpolation} with $p=2$
we obtain that $\Gamma(\hat f_i)\in L^2(\bar{U}_i,
\mm\llcorner{\bar{U}_i})$. 

Still applying \eqref{eq:43}, now with $f:=\hat f_i$ and
$\nchi:=\nchi_i$, we obtain that $f_i \in W^{1,2}(\bar
U_i,\sfd,\mm\llcorner \bar U_i)\cap L^\infty(\bar U_i,\mm)$ with
$\Delta_{\bar U_i} f_i\in L^4(\bar U_i,\mm\llcorner\bar U_i)$:
Theorem~\ref{thm:crucial} provides
$\Gq {f_i}\in W^{1,2}(\bar
U_i,\sfd,\mm\llcorner \bar U_i)$ and
since $\Gq {f_i}$ has compact support in $U_i$ we conclude that (the
trivial extension of) $\Gq{f_i}$ belongs to $W^{1,2}(X,\sfd,\mm)$ and that
$\Delta f_i\in L^4(X,\mm)$.
More generally, since $\nchi_j$ is globally Lipschitz and with bounded Laplacian in $X$,
we obtain that $f_i\nchi_j\in {\rm D}_{L^4}(\Delta)$ for all $i,\,j=1,\ldots,n$. Since both
$f_i\nchi_j$ and $f_j\nchi_i$ have compact support in $U_i$ we obtain 
$\Delta_{\bar U_i} (f_i\nchi_j\pm f_j\nchi_i)\in L^4(\bar U_i,\mm\llcorner\bar U_i)$.
By applying Theorem~\ref{thm:crucial} in $U_i$ we get
$\Gq {f_i\nchi_j\pm f_j\nchi_i}\in W^{1,2}(X,\sfd,\mm)$, so that
by polarization
$$
\Gbil{f_i}{f_j} =\Gbil{f_i\nchi_j}{f_j\nchi_i}\in W^{1,2}(X,\sfd,\mm).
$$
The bilinearity of $\Gamma$ yields
$$
\Gq{\sum_i f_i}=\Gamma\bigl(\sum_i f_i, \sum_j f_j\bigr)= 
\sum_{ij} \Gbil{f_i}{f_j} \in W^{1,2}(X,\sfd,\mm).$$

Using that $\sum_i \nchi_i\equiv 1$ on $E$ and the identity 
$f\psi=\sum_i (f\psi)\nchi_i=\psi\sum_i f_i$
we conclude that $\Gq{f\psi}\in W^{1,2}(X,\sfd,\mm)$ 
and $\Delta(f\psi)\in L^4(X,\mm)$.

Finally, $\Gq f\in W^{1,2}_{\rm loc}(X,\sfd,\mm)$ follows by Remark \ref{rem:trivial}.\end{proof}
\begin{theorem}
  \label{cor:local}
  Under the same assumptions of the previous Lemma~\ref{lem:ImprInt},
  every function $f\in \dom_{L^4}(\Delta)\cap L^\infty(X,\mm)$ satisfies
  \eqref{eq:11} for every nonnegative $\varphi\in W^{1,2}_{\rm c}(X,\sfd,\mm)$.

  If, moreover, $(X,\sfd)$ is a proper metric space 
  then there exists a nonnegative Radon measure $\Gamma_{2,K}^\star[f]$ 
  vanishing on $\cE$-polar sets and
  representing the linear functional
  \begin{equation}
    \label{eq:63}
    W^{1,2}_\rmc(X,\sfd,\mm)\ni\varphi \mapsto 
    \int_X -\frac 12\Gbil{\Gq f}\varphi+
    \Delta f\,\Gbil f\varphi+\big((\Delta f)^2-K\Gq f\big)\varphi\,\d\mm.
  \end{equation}
  Finally, the density $\emeas Kf$ of the measure $\Gamma_{2,K}^\star[f]$ still satisfies \eqref{eq:57}.
\end{theorem}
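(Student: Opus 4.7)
Let $\varphi\in W^{1,2}_\rmc(X,\sfd,\mm)$ be nonnegative, set $E:=\supp(\varphi)$, and apply Proposition~\ref{pro:PartOf1} with the covering $\{U_i\}$ of $E$ to obtain a finite partition of unity $\{\nchi_i\}$ as in \eqref{eq:32}, with $\sum_i\nchi_i\equiv 1$ on a neighborhood of $E$. For each $i$, Lemma~\ref{lem:cutoff} furnishes $\hat\nchi_i\in\Lip_\rmc(U_i)$ identically one on a neighborhood of $\supp(\nchi_i)$, with $\Delta\hat\nchi_i\in L^\infty(X,\mm)$ and $\Gq{\hat\nchi_i}\in\V$. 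Set $\hat f_i:=f\hat\nchi_i$; the argument already used in Lemma~\ref{lem:ImprInt} (via \eqref{eq:43}, Theorem~\ref{thm:interpolation} and Lemma~\ref{rem:locality}) shows $\hat f_i\in\dom_{L^4}(\Delta_{\bar U_i})\cap L^\infty(\bar U_i,\mm\llcorner\bar U_i)$. Theorem~\ref{thm:crucial} applied in $(\bar U_i,\sfd,\mm\llcorner\bar U_i)$ then yields \eqref{eq:11} for $\hat f_i$ against the nonnegative test $\varphi\nchi_i\in W^{1,2}(\bar U_i,\sfd,\mm\llcorner\bar U_i)$.

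The key point is that on the open set $W_i:=\{\hat\nchi_i=1\}^\circ$, which contains $\supp(\varphi\nchi_i)$, strong locality of $\Gamma$ gives $\Gq{\hat f_i}=\Gq f$ and $\Gbil{\hat f_i}{\psi}=\Gbil f\psi$ whenever $\supp(\psi)\subset W_i$, while Lemma~\ref{rem:locality} together with $\Delta\hat\nchi_i=0$ and $\Gbil f{\hat\nchi_i}=0$ on $W_i$ yields $\Delta_{\bar U_i}\hat f_i=\Delta f$ $\mm$-a.e.\ on $W_i$. Hence the local inequality \eqref{eq:11} for $\hat f_i$ and $\varphi\nchi_i$ rewrites verbatim as \eqref{eq:11} for $f$ and $\varphi\nchi_i$. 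Summing in $i$ and using $\sum_i\varphi\nchi_i=\varphi$ produces \eqref{eq:11} for $f$ and $\varphi$.

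\paragraph{Radon measure and the bound \eqref{eq:57}.} Assume now $(X,\sfd)$ is proper, so the Cheeger energy is a regular Dirichlet form (Remark~\ref{rem:Cheeger-regular}), and likewise on each $\bar U_i$. Corollary~\ref{cor:crucial} applied in $(\bar U_i,\sfd,\mm\llcorner\bar U_i)$ to $\hat f_i$ yields a nonnegative Radon measure $\mu_i:=\Gamma^\star_{2,K}[\hat f_i]$ on $\bar U_i$ vanishing on $\cE_{\bar U_i}$-polar sets and satisfying \eqref{eq:81} and \eqref{eq:57}. For $\psi\in W^{1,2}_\rmc(X,\sfd,\mm)$ with $\supp(\psi)\subset W_i$, the locality observations above imply that $\int\psi\,\d\mu_i$ coincides with the value of the functional \eqref{eq:63} at $\psi$; in particular, when $\supp(\psi)\subset W_i\cap W_j$, $\int\psi\,\d\mu_i=\int\psi\,\d\mu_j$. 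Since $W^{1,2}_\rmc(X,\sfd,\mm)\cap\rmC_\rmc(W_i\cap W_j)$ is dense in $\rmC_\rmc(W_i\cap W_j)$ (properness + Remark~\ref{rem:Cheeger-regular}), the Radon measures $\mu_i$ and $\mu_j$ agree on $W_i\cap W_j$. Exhausting $X$ by relatively compact open sets covered by finitely many $\{U_i\}$, we glue the restrictions $\mu_i|_{W_i}$ consistently into a Radon measure $\Gamma^\star_{2,K}[f]$ on $X$ representing \eqref{eq:63}; it vanishes on $\cE$-polar sets by Proposition~\ref{prop:localK}. Finally, \eqref{eq:57} is an $\mm$-a.e.\ statement: on $W_i$ the Lebesgue density $\emeas Kf$ coincides with $\emeas K{\hat f_i}$, and by locality $\Gq f=\Gq{\hat f_i}$, $\Gq{\Gq f}=\Gq{\Gq{\hat f_i}}$; hence \eqref{eq:57} for $f$ on $W_i$ is inherited from the corresponding inequality for $\hat f_i$ in $\bar U_i$.

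\paragraph{Main obstacle.} The crux of the argument is the strict locality of the functional \eqref{eq:63}, which governs both the reduction of the inequality and the compatibility of the measures $\mu_i$ on overlaps. The $\Gamma$-terms are immediate from the strong locality of the \emph{Carr\'e du Champ}, but the $(\Delta f)^2$ and $\Delta f\,\Gbil f\varphi$ terms require the fact that $f=g$ on an open set $W$ implies $\Delta f=\Delta g$ $\mm$-a.e.\ on $W$. This follows from strong locality of $\cE$: for any $\psi\in W^{1,2}_\rmc(W,\sfd,\mm)$ one has $\Gbil{f-g}\psi=0$ $\mm$-a.e., so $\cE(f-g,\psi)=0$ and $\Delta f=\Delta g$ as elements of $L^2(W,\mm\llcorner W)$.
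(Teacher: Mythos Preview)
Your argument for the integral inequality \eqref{eq:11} is essentially the paper's proof: partition of unity $\{\nchi_i\}$, auxiliary cutoffs $\hat\nchi_i$, apply Theorem~\ref{thm:crucial} in each $(\bar U_i,\sfd,\mm\llcorner\bar U_i)$ to $\hat f_i=f\hat\nchi_i$ against $\varphi\nchi_i$, use locality to pass back to $f$, and sum. Your explicit discussion in the ``Main obstacle'' paragraph of why $\Delta\hat f_i=\Delta f$ on $W_i$ is a welcome addition; the paper leaves this implicit.

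For the Radon measure you take a genuinely different route. Rather than gluing the local measures $\mu_i=\Gamma_{2,K}^\star[\hat f_i]$ coming from Corollary~\ref{cor:crucial}, the paper fixes $x_0$, considers the Hilbert space $\V_R$ obtained by closing $\{\varphi\in\V:\supp\varphi\subset B_R(x_0)\}$ in $\V$, observes that $\cE|_{\V_R}$ is a regular Dirichlet form on $L^2(B_R,\mm\llcorner B_R)$, and applies Proposition~\ref{prop:Daniell} directly to $\ell_f|_{\V_R}$ (whose nonnegativity is exactly the first part, and whose boundedness in the sense of Proposition~\ref{prop:Daniell}(3) is checked via a cutoff $\psi\equiv 1$ on $\overline{B_R}$). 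The resulting measures $\mu_R$ are compatible as $R$ increases and glue to $\Gamma_{2,K}^\star[f]$. Only \emph{afterward} does the paper invoke Corollary~\ref{cor:crucial} on the pieces, to identify the Lebesgue density locally and transfer \eqref{eq:57}. Your direct gluing of the $\mu_i|_{W_i}$ is more economical in that it bypasses the auxiliary spaces $\V_R$ and reuses the local measures already built; the paper's approach, by contrast, gets finiteness of each $\mu_R$ and the vanishing on $\cE$-polar sets immediately from Proposition~\ref{prop:Daniell}, without any comparison of capacities.

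On that last point your argument is a bit loose: you invoke Proposition~\ref{prop:localK} to conclude that the glued measure vanishes on $\cE$-polar sets, but that proposition concerns localization of weak gradients, not capacities or polar sets. What you actually need is that an $\cE$-polar set intersects each relatively compact $W_i\Subset U_i$ in an $\cE_{\bar U_i}$-polar set. This is true, but it requires a short capacity comparison (multiply a competitor $\varphi\in\V$ with $\varphi\ge 1$ near $E\subset W_i$ by a Lipschitz cutoff supported in $U_i$ and use Proposition~\ref{prop:localK}(a) to control the $\V(\bar U_i)$-norm), not merely the statement of Proposition~\ref{prop:localK}. With this gap filled, your approach is correct.
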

Notice that both \eqref{eq:11} and \eqref{eq:12} 
make sense 
under the above
assumptions 
thanks to 
Lemma~\ref{lem:ImprInt}.
\begin{proof}
  Let $f\in \dom_{L^4}(\Delta)\cap L^\infty(X,\mm)$ 
  and $\varphi\in W^{1,2}_\rmc(X,\sfd,\mm)$ be fixed and let us prove
  \eqref{eq:11}. 
  
Let $E=\supp(\varphi)$ and let $\{\nchi_i\}$ be the cutoff functions constructed in Proposition~\ref{pro:PartOf1}, subordinated to the open covering $\{U_i\}$,
whose sum is identically 1 in a neighbourhood of $E$, null for all but finitely many $i$;
since $\nchi_i$ have support contained in $U_i$, for all $i$ such that $\nchi_i$ is not null we apply Lemma~\ref{lem:cutoff} to 
obtain Lipschitz function $\hat\nchi_i$ with compact support in $U_i$,
bounded Laplacian, identically equal to 1 on a neighbourhood of
$\supp(\nchi_i)$.
We can also find $\psi\in \Lip_\rmc(U)$ satisfying \eqref{eq:32}
and $\psi\equiv 1$ on $\supp(\sum_i\hat\nchi_i).$

It is easy to check, using Lemma~\ref{lem:ImprInt} and 
Lemma~\ref{rem:locality}, that the functions $\varphi_i:=\nchi_i
\varphi$
 and $f_i:=\hat{\nchi}_i f$ satisfy the assumptions of 
Theorem~\ref{thm:crucial} in the metrically  $\BE(K,N)$ 
m.m.s.~$(\bar U_i,\sfd,\mm\llcorner\bar U_i)$. We thus get 
\begin{equation}\label{eq:LTGLoc}
\int_{\bar{U}_i} \Big( -\frac12 \Gbil{\Gq {f_i}}{\varphi_i}+  
\Delta f_i\,\Gamma(f_i,\varphi_i) + \varphi_i(\Delta f_i)^2\Big)\,\d\mm \geq K\int_{\bar{U}_i} \Gamma(f_i) \,\varphi_i\,\d\mm+\nu\int_{\bar{U}_i}
  (\Delta f_i)^2\varphi_i\,\d\mm.
\end{equation}

Since Lemma~\ref{lem:ImprInt} gives that $\Gamma(f)\in W^{1,2}_{\rm
  loc}(X,\sfd,\mm)$, 
recalling also that $\sum_i \varphi_i\equiv \varphi$ and $\hat{\nchi}_i\equiv 1$ on $\supp(\nchi_i)$,  we can write
\begin{align*}
  \int_X &\Big( -\frac 12\Gbil{\Gq f}\varphi +
      \Delta f\,\Gbil f\varphi+\varphi(\Delta f)^2\Big)\,\d\mm \\
      &= \int_X \left( -\frac 12\Gbil{\Gq f\,\psi}{\sum_i \varphi_i
          } +   \Delta f \,\Gamma\bigl(f,\sum_i \varphi_i \bigr) + 
 \bigl(\sum_i \varphi_i\bigr) (\Delta f)^2\right)\,\d\mm  \nonumber \\
 &=\sum_i \int_{\bar{U}_i} \left(-\frac12 \Gbil{\Gq f\,\psi}{\varphi_i} 
+   \Delta f \,\Gamma\left(f, \varphi_i \right) +  \varphi_i (\Delta f)^2\right)\,\d\mm  \nonumber \\
 &=\sum_i \int_{\bar{U}_i} \left(
-\frac12 \Gbil{\Gq {f_i}}{\varphi_i}  +   \Delta f_i \,\Gamma\left(f_i, \varphi_i \right) +  \varphi_i (\Delta f_i)^2\right)\,\d\mm  \nonumber \\
 &\geq \sum_i K\int_{\bar{U}_i}  \Gamma(f_i) \,\varphi_i\,\d\mm+\nu\int_{\bar{U}_i}  
  (\Delta f_i)^2\varphi_i\,\d\mm \quad \text{by \eqref{eq:LTGLoc}} \nonumber \\
&= 	\sum_i K\int_{\bar{U}_i}  \Gamma(f) \,\varphi_i\,\d\mm+\nu\int_{\bar{U}_i} 
  (\Delta f)^2\varphi_i\,\d\mm \nonumber \\
&=  K\int_X \Gamma(f) \,\varphi \,\d\mm+\nu\int_X
  (\Delta f)^2\varphi\,\d\mm.
\end{align*}

In order to prove the second part of the statement, 
in the case when $(X,\sfd)$ is proper  
(recall Remark~\ref{rem:proper}), let us call $\ell_f$ the linear functional
defined by \eqref{eq:63}, let us fix $x_0\in X$ with the
collection of the open balls $B_R:=\{x\in X:\sfd(x,x_0)<R\}$
and let us consider the Hilbert space
$\V_R$ obtained by taking the closure in $\V$ of
the set $\{\varphi\in W^{1,2}(X,\sfd,\mm):\supp\varphi\subset 
{B_R}\big\}$.
It is easy to check that the restriction of $\cE$ to $\V_R$ is
a regular Dirichlet form 
(recall Remark~\ref{rem:Cheeger-regular}) 
on $L^2(B_R,\mm\llcorner B_R)$ 
and the restriction of $\ell_f$ to $\V_R$ is a nonnegative continuous
functional, which also satisfies (3) of Proposition~\ref{prop:Daniell}:
in fact, if $\varphi\in\V_R$ with $0\le \varphi\le 1$ $\mm$-a.e., 
by taking a cutoff function $\psi$ as in \eqref{eq:32} with
$U\supset \overline B_R$ and $\psi\equiv 1$ on $\overline B_R$,
the inequality $0\le \psi-\varphi\in W^{1,2}_\rmc(X,\sfd,\mm)$ 
and \eqref{eq:11} yield
\begin{displaymath}
  \langle \ell_f,\varphi\rangle\le 
  \int_X -\frac 12\Gbil{\Gq f}\psi+
  \Delta f\,\Gbil f\psi+\big((\Delta f)^2-K\Gq f\big)\psi\,\d\mm.
\end{displaymath}
Thus, the action of 
$\ell_f$ on $\V_R$ can be represented
by a finite nonnegative Borel measure $\mu_R$ on $B_R$ 
not charging $\cE$-polar subsets of $B_R$.
It is easy to check that $S<R$ yields 
$(\mu_R)\llcorner B_S=\mu_S$, so that we can eventually find 
a nonnegative Radon measure $\Gamma_{2,K}^\star[f]$ as stated in the
Theorem.

Let us now take an arbitrary compact set $E\subset U_i$
and a Lipschitz function $\nchi_i$ with compact support in $U_i$,
bounded Laplacian and identically $1$ on a neighbourhood of
$\supp(\varphi_i)$. The function $\hat f_i=\nchi_i f$ belongs to 
$W^{1,2}(\bar U_i,\sfd,\mm\llcorner \bar U_i)$ with
$f_i,\Delta_{U_i}f_i\in L^4(\bar U_i,\mm\llcorner \bar U_i)$ 
so that applying Corollary \ref{cor:crucial} we
get a finite nonnegative Borel measure $\mu_i=\Gamma_{2,K}^\star[f_i]$ 
(relative to $U_i$) with Lebesgue density $\gamma_i$ 
satisfying
\begin{equation}
  \label{eq:64}
  \langle \ell_{f_i},\psi\rangle=\int_{\bar U_i}\psi\,\d\mu_i\quad
  \forevery\ \psi\in W^{1,2}_\rmc(U_i,\sfd,\mm),\quad
  \Gq{\Gq {f_i}}\le 4\gamma_i\Gq {f_i}\quad \text{$\mm$-a.e.\ on }U_i.
\end{equation}
For every function $\varphi\in
W^{1,2}(X,\sfd,\mm)$ with support in $E$ we then have
\begin{align*}
  \int_E \varphi\,\d\Gamma_{2,K}^\star [f]=
  \langle \ell_f,\varphi\rangle=
  \langle \ell_{f_i},\varphi\rangle=
  \int_{E}\varphi\,\d\mu_i
\end{align*}
so that $\Gamma_{2,K}^\star[f]$ coincide with $\mu_i$ on $E$. 
It follows that its Lebesgue density $\gamma$ coincides with
$\gamma_i$ and \eqref{eq:64} yields
\begin{displaymath}
  \Gq{\Gq f}=\Gq{\Gq{f_i}}\le 4\gamma_i\,\Gq {f_i}=
  4\gamma\,\Gq {f}\quad\text{$\mm$-a.e.~on }E.
\end{displaymath}
Since we can cover $X$ with a sequence of compact sets contained in some set $U_i$,
we conclude.
\end{proof}

\begin{theorem}
  \label{thm:medusa}
  Let $(X,\sfd,\mm)\in\XX$ be a proper 
  m.m.s. and let $X=\cup_{i\in I} U_i$ where
  $\{U_i\}_{i\in I}$ are non-empty open sets such that 
  $(\bar U_i,\sfd,\mm\llcorner\bar U_i)\in \XX$ 
  satisfy the metric $\BE(K,\infty)$ condition for all $i\in I$.
  For every $f\in \dom_{L^4}(\Delta)\cap L^\infty(X,\mm)$ the following properties hold:
  \begin{enumerate}[\rm (a)]
  \item the measure $\Gamma_{2,K}^\star[f]$ is finite and satisfies
    \eqref{eq:81};
  \item $\Gq f\in W^{1,2}(X,\sfd,\mm)$ and satisfies \eqref{eq:12} for any nonnegative $\varphi$ with
  $\Gq{\varphi}\in L^\infty(X,\mm)$.
  \end{enumerate}
\end{theorem}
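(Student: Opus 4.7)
I would prove (b) first and then deduce (a) from it; the strategy in both cases is to upgrade the compactly-supported local statement of Theorem~\ref{cor:local} to a global one by testing against a sequence of cutoffs exhausting $X$. Since $(X,\sfd)$ is proper by Remark~\ref{rem:proper} and hence $\sigma$-compact, fix an exhausting sequence of compact sets $E_n\nearrow X$ and apply Proposition~\ref{pro:PartOf1} to each $E_n$ to produce cutoffs $\psi_n\in\Lip_\rmc(X)$ with $0\le\psi_n\le 1$, $\psi_n\equiv 1$ on $E_n$, $\Delta\psi_n\in L^\infty(X,\mm)$ and $\Gq{\psi_n}\in W^{1,2}(X,\sfd,\mm)\cap L^\infty(X,\mm)$, chosen monotone nondecreasing with $\psi_n\to 1$ pointwise and with $\Gq{\psi_n}$, $\Delta\psi_n$ supported in $X\setminus E_{n-1}$ and uniformly $L^\infty$-bounded (achievable by maintaining a fixed positive gap between $E_n$ and the supports of the partition-of-unity functions).

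\textbf{Proof of (b).} First I would establish inequality \eqref{eq:12} on $X$ for every nonnegative $\varphi\in W^{1,2}_\rmc(X,\sfd,\mm)$ with $\Gq\varphi\in L^\infty(X,\mm)$ by repeating, essentially verbatim, the partition-of-unity argument in the proof of Theorem~\ref{cor:local} but substituting Theorem~\ref{thm:crucial}'s inequality \eqref{eq:12} (applied in each local space $(\bar U_i,\sfd,\mm\llcorner\bar U_i)$, which does satisfy $\BE(K,\infty)$) for \eqref{eq:11}: the blocks $f_i:=\hat\nchi_i f$ and $\varphi_i:=\nchi_i\varphi$ fit the hypotheses of Theorem~\ref{thm:crucial} in $(\bar U_i,\sfd,\mm\llcorner\bar U_i)$ by Lemma~\ref{lem:ImprInt}, and the identities $\hat\nchi_i\equiv 1$ on $\supp(\nchi_i)$, $\sum_i\nchi_i\equiv 1$ on $\supp\varphi$ make the auxiliary-cutoff cross terms collapse upon summation. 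Then I would test this inequality with $\varphi=\psi_n$, using Cauchy--Schwarz and Young absorption to bound the right-hand side by $\varepsilon\int\psi_n(\Gq{\Gq f}+\Gq f^2)\,\d\mm$ plus terms controlled uniformly in $n$ via $\|\Delta_\lambda f\|_{L^4}$, $\|f\|_{L^\infty}$, $\|\Gq{\psi_n}\|_{L^\infty}$, $\|\Delta\psi_n\|_{L^\infty}$, and the local $L^2$-integrability of $\Gq f$ provided by Lemma~\ref{lem:ImprInt} and Theorem~\ref{thm:interpolation}. Absorbing the first two terms into the left-hand side and invoking monotone convergence as $n\to\infty$ produces $\Gq f\in W^{1,2}(X,\sfd,\mm)$ with an explicit bound; the full form of \eqref{eq:12} for general $\varphi\in\V_+$ with $\Gq\varphi\in L^\infty(X,\mm)$ then follows by substituting $\varphi\psi_n$ for $\varphi$ and passing to the limit by dominated convergence, now legitimate thanks to the newly established global integrability.

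\textbf{Proof of (a) and main obstacle.} With $\Gq f,\,\Gq{\Gq f}\in L^1(X,\mm)$ in hand, by Theorem~\ref{cor:local} the Radon measure $\Gamma_{2,K}^\star[f]$ represents the functional \eqref{eq:63} on $W^{1,2}_\rmc(X,\sfd,\mm)$, so testing with $\varphi=\psi_n$ gives
$$\int_X\psi_n\,\d\Gamma_{2,K}^\star[f]=\int_X\Big(-\tfrac12\Gbil{\Gq f}{\psi_n}+\Delta f\,\Gbil{f}{\psi_n}+\big((\Delta f)^2-K\Gq f\big)\psi_n\Big)\,\d\mm.$$
Monotone convergence drives the left-hand side up to $\Gamma_{2,K}^\star[f](X)$; on the right, an integration by parts, $-\int\Gbil{\Gq f}{\psi_n}\,\d\mm=\int\Gq f\,\Delta\psi_n\,\d\mm$, combined with $\|\Delta\psi_n\|_{L^\infty}$ uniformly bounded, $\supp\Delta\psi_n\subset X\setminus E_{n-1}$, and absolute continuity of the integral of $\Gq f$ (globally $L^1$ thanks to (b)), kills the first term; a similar argument combining Cauchy--Schwarz with $\supp\Gq{\psi_n}\subset X\setminus E_{n-1}$ kills the second; and the last term tends to $\int_X\big((\Delta f)^2-K\Gq f\big)\,\d\mm$ by dominated convergence. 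This is exactly \eqref{eq:81}, giving finiteness. The hardest step is the first half of (b): adapting the partition-of-unity machinery to the quadratic-in-$\Gq f$ inequality \eqref{eq:12}, carefully tracking the cross terms produced by differentiating the auxiliary cutoffs $\hat\nchi_i$, and then executing the Young absorption uniformly in the exhausting sequence $\{\psi_n\}$ in order to convert local bounds into a genuine global $W^{1,2}$-estimate on $\Gq f$.
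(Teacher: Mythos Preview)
Your plan has a genuine gap in the proof of (b). When you test \eqref{eq:12} with $\varphi=\psi_n$ (or $\psi_n^2$), the two boundary terms
\[
-\int_X\Gq f\,\Gbil{\Gq f}{\psi_n}\,\d\mm
\qquad\text{and}\qquad
2\int_X\Delta_\lambda f\,\Gq f\,\Gbil f{\psi_n}\,\d\mm
\]
produce, after Cauchy--Schwarz, integrals of the form $\int_{A_n}\Gq f\,\sqrt{\Gq{\Gq f}}$ or $\int_{A_n}\Gq f^2$ over the shell $A_n:=\supp\Gq{\psi_n}$. These cannot be absorbed into $\eps\int\psi_n\bigl(\Gq{\Gq f}+\Gq f^2\bigr)\,\d\mm$, because $\psi_n$ may vanish on $A_n$; and they cannot be shown uniformly bounded in $n$, since you only know $\Gq f\in L^2_{\rm loc}$ from Lemma~\ref{lem:ImprInt}, not $\Gq f\in L^2(X,\mm)$. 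A uniform bound on $\|\Gq{\psi_n}\|_\infty$ is not enough here: however you split, the remainder involves a global quantity of the type $\int_X\Gq f^2\,\d\mm$, which is precisely what you are trying to prove finite.

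The paper avoids this circularity by reversing your order and by two devices your sketch does not use. It first proves (a) directly, testing the \emph{measure} $\Gamma_{2,K}^\star[f]$ against $\varphi_n^2$ for much simpler Lipschitz cutoffs (no bounded Laplacian required) with $\Gq{\varphi_n}\le 4^{-n}$, and invoking the pointwise estimate $\Gq{\Gq f}\le 4\gamma\,\Gq f$ from Theorem~\ref{cor:local}: the cross term then splits as $\eps\int\gamma\varphi_n^2\,\d\mm$ (absorbable into the left-hand side, since $\gamma\,\mm\le\Gamma_{2,K}^\star[f]$) plus $4^{-n}\eps^{-1}\int\Gq f\,\d\mm$, which vanishes using only the always-available fact $\Gq f\in L^1$. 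With (a) in hand, (b) is obtained by testing against the \emph{truncation} $g_k:=\min(\Gq f,k)$ times $\varphi_n^2$: the boundedness of $g_k$, the decaying gradients of $\varphi_n$, and the now-established finiteness $\Gamma_{2,K}^\star[f](X)<\infty$ make each cross term vanish as $n\to\infty$ for fixed $k$, after which one lets $k\to\infty$. Incidentally, your argument for (a) essentially works on its own once you observe it needs only $\Gq f\in L^1$ and $\Gq f\in W^{1,2}_{\rm loc}$; placing it after (b) is what creates the circularity. (Also note that a uniform bound on $\|\Delta\psi_n\|_\infty$ across all scales is not obviously delivered by the construction in Proposition~\ref{pro:PartOf1}; the paper's simpler Lipschitz $\varphi_n$ sidestep this issue entirely.)
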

\begin{proof}
  Let us fix $x_0\in X$ and 
  let us consider the Lipschitz cutoff function
  \begin{equation}\label{eq:65}
    \varphi_n(x)=
    \begin{cases}
      1&\text{on }B_{2^n}(x_0),\\
      0&\text{on }X\setminus B_{2^{n+1}}(x_0),\\
      2-\sfd(x,x_0)2^{-n}&\text{on }B_{2^{n+1}}(x_0)\setminus B_{2^{n}}(x_0),
    \end{cases}
  \end{equation}
  which satisfies $|\rmD\varphi_n|\le 2^{-n}$. By Theorem~\ref{cor:local},
  if $f\in \dom_{L^4}(\Delta)\cap L^\infty(X,\mm)$ and $\gamma$ is the Lebesgue density of
  $\Gamma_{2,K}^\star[f]$, 
  we get
  \begin{align*}
    %\int_X \gamma\varphi_n^2\,\d\mm&\le 
    -\frac 12 \int_X \Gbil{\Gq f}{\varphi_n^2}\,\d\mm&=
    -\int_X \Gbil{\Gq f}{\varphi_n}\,\varphi_n\,\d\mm\le 
    \int_X \varphi_n\sqrt{\Gq{\Gq f}}\sqrt {\Gq {\varphi_n}}\,\d\mm
    \\&\le 
    2\int_X \varphi_n\sqrt{\gamma{\Gq f}\Gq{\varphi_n}}
    %\sqrt {\Gq \varphi_n}
    \,\d\mm
    \le  \eps \int_X \gamma\varphi_n^2\,\d\mm+
    \frac 1{4^{n}\eps} \int_X \Gq f\,\d\mm,\\
    \int_X \Delta f\,\Gbil f{\varphi_n^2}\,\d\mm&\le 
    2^{-n}\|\Delta f\|_{L^2} \sqrt{\cE(f)}.
    \end{align*}
    In addition, the integrability of $\Gq{f}$ gives
    $$
    \limsup_{n\to\infty}\int_X \Big((\Delta f)^2-K\Gq f\Big)\varphi_n^2\,\d\mm\le 
    \int_X \Big((\Delta f)^2-K\Gq f\Big)\,\d\mm.
    $$
  Applying the definition of $\Gamma_{2,K}^\star[f]$ we get
  \begin{align*}
    (1-\eps)\int_X \varphi_n^2\,\d\Gamma_{2,K}^\star[f]\le 
    \frac 1{4^n\eps}\cE(f)+2^{-n}\|\Delta f\|_{L^2} \sqrt{\cE(f)}+
    \int_X \Big((\Delta f)^2-K\Gq f\Big)\varphi_n^2\,\d\mm.
  \end{align*}
  Passing to the limit first as $n\to\infty$ and then as $\eps\down0$ 
  we get the bound of statement (a).

  Concerning (b), let us first remark that, setting $g:=\Gq f\in W^{1,2}_{\rm loc}(X,\sfd,\mm)$ and $ g_k:=\min\{g,k\}$,
  by Theorem~\ref{cor:local} we have
  \begin{displaymath}
    \Gq {g_k}\le 4\gamma\, g_k\, \nchi_k,\quad
    \text{where}\quad
    \nchi_k=
    \begin{cases}
      1&\text{in the set }\{g\le k\},\\
      0&\text{in the set }\{g>k\}
    \end{cases}
  \end{displaymath}
  so that $g_k\in W^{1,2}(X,\sfd,\mm)$.
  
  Integrating now the nonnegative function $g_k\varphi_n^2\in W^{1,2}_{\rmc}(X,\sfd,\mm)$ 
  with respect to $\Gamma_{2,K}^\star[f]$ and observing that 
  $\Gbil g{g_k}=\Gq{g_k}\nchi_k=\Gq {g_k}$, from \eqref{eq:12} of Theorem~\ref{thm:crucial} we get
  \begin{align*}
    \int_X \Big(K_\lambda \,g\,g_k+\Gq{g_k}\Big)\varphi_n^2\,\d\mm
    &\le 2\int_X \Big(\Delta_\lambda f\,\Gbil f{g_k}+\Delta f\Delta_\lambda f
    g_k\Big)\varphi_n^2\,\d\mm
    \\&
    +2\int_X \Big(-g_k\Gbil g{\varphi_n}+2\Delta_\lambda
    f\,g_k\Gbil f{\varphi_n}\Big)\varphi_n\,\d\mm,
  \end{align*}
  where $K_\lambda=2K+2\lambda$ and we choose $\lambda$ in such a way that $K_\lambda\geq 1$.
  We can now estimate from above the integrals on the right hand side:
  \begin{align*}
    I&=\int_X \Delta_\lambda f\,\Gbil f {g_k}\varphi_n^2\,\d\mm\le 
    \|\Delta_\lambda f\|_{L^4}\,\|g_k^{1/2}\varphi_n\|_{L^4}\,
    \|\Gq {g_k}^{1/2}\varphi_n\|_{L^2}
    \\&\le 2 \|\Delta_\lambda f\|_{L^4}^4+
    \frac 18 \int_X g^2_k\,\varphi_n^2\,\d\mm+
    \frac 14 \int_X \Gq{g_k}\varphi_n^2\,\d\mm,
    \intertext{since $|\Gbil f{g_k}|\le \sqrt{g\,\Gq {g_k}}\nchi_k=
      \sqrt{g_k\,\Gq {g_k}}\nchi_k$ and $|\varphi_n|\leq 1$;}
    II&=\int_X \Delta f\,\Delta_\lambda f\,g_k\,\varphi_n^2\,\d\mm\le 
    \|\Delta f\|_{L^4}\|\Delta_\lambda f\|_{L^4}\,\|g_k\varphi_n^2\|_{L^2}
    \\&\le 8\|\Delta_\lambda f\|_{L^4}^4+\frac 18 \int_X g_k^2\, \varphi_n^2\,\d\mm
    \qquad\text{by \eqref{eq:39} and $|\varphi_n|\le 1$,}
    \\
    III&=-\int_X g_k\Gbil{g}{\varphi_n}\varphi_n\,\d\mm\le 
    \frac k{2^n} \int_X \sqrt {\Gq g}\,\d\mm
    \le \frac k{2^{n-1}}
    \Big(\Gamma_{2,K}^\star[f](X)\cdot\cE(f)\Big)^{1/2}
    \intertext{where we used \eqref{eq:57} and the finiteness of $\Gamma_{2,K}^\star[f]$,}
    IV&=2\int_X \Delta_\lambda
    f\,g_k\Gbil f{\varphi_n}\varphi_n\,\d\mm
    \le \frac k{2^{n-1}}
    \|\Delta_\lambda f\|_{L^2}\, \cE(f)^{1/2}.
  \end{align*}
  Since $g_k^2\le g\,g_k$, summing the contribution of 
  the four terms and using \eqref{eq:10}, we get
   \begin{align*}
     \frac 12\int_X \Big(K_\lambda \,g\,g_k+\Gq{g_k}\Big)\varphi_n^2\,\d\mm
    &\le 20\|\Delta_\lambda f\|_{L^4}^2 +
    \frac k{2^{n-2}}\cE(f)^{1/2}\Big(\|\Delta_\lambda f\|_{L^2}+\Gamma_{2,K}^\star[f](X)^{1/2}\Big).
  \end{align*}
  Passing first to the limit as $n\to\infty$ we obtain
  \begin{displaymath}
    \int_X \Big(K_\lambda \,g\,g_k+\Gq{g_k}\Big)\,\d\mm
    \le 40\|\Delta_\lambda f\|_{L^4}^2.
  \end{displaymath}
  We then pass to the limit as $k\to\infty$ and we obtain 
  $g\in W^{1,2}(X,\sfd,\mm)$.
  Finally, \eqref{eq:12} can be obtained as in the proof of Theorem~\ref{thm:crucial}.
\end{proof}

\begin{theorem}\label{thm:LTGComp}
Let $(X,\sfd,\mm)\in\XX$, with $(X,\sfd)$ length and locally compact.
Assume that there exists a covering $\{U_i\}_{i\in I}$ of $X$ by non-empty open sets $U_i$
such that $(\bar{U}_i,\sfd,\mm\llcorner{\bar{U}_i})\in \XX$ satisfy 
the metric $\BE(K,N)$ condition. 

Then also $(X,\sfd,\mm)$ satisfies the metric $\BE(K,N)$
condition. 
\end{theorem}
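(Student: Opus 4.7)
I would establish the two conditions of Definition~\ref{def:topo-BE} separately: the $\BE(K,N)$ inequality \eqref{eq:9} and the Sobolev-to-Lipschitz property \eqref{eq:26}.

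For the first, by Lemma~\ref{lem:Approximation} it suffices to verify \eqref{eq:9} for $f\in\dom_\V(\Delta)\cap\dom_{L^\infty}(\Delta)$ and nonnegative $\varphi\in\dom_{L^\infty}(\Delta)$; for such $f$, $\Delta f\in L^2\cap L^\infty\subset L^4(X,\mm)$, so $f\in\dom_{L^4}(\Delta)\cap L^\infty(X,\mm)$. Theorem~\ref{cor:local}, applied to the covering $\{U_i\}$ (which satisfies the metric $\BE(K,\infty)$ condition locally), gives the weak inequality \eqref{eq:11} globally on $X$ for every nonnegative $\varphi\in W^{1,2}_\rmc(X,\sfd,\mm)$; Theorem~\ref{thm:medusa} (applicable since $(X,\sfd)$ is proper by Remark~\ref{rem:proper}) then yields $\Gq f\in W^{1,2}(X,\sfd,\mm)$ together with a finite nonnegative Radon measure $\Gamma^\star_{2,K}[f]$ whose Lebesgue density $\gamma$ satisfies the structural bound $\Gq{\Gq f}\le 4\gamma\,\Gq f$.

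The remaining work is to extend \eqref{eq:11} from $\varphi\in W^{1,2}_\rmc$ to general nonnegative $\varphi\in\dom_{L^\infty}(\Delta)$ and to recover \eqref{eq:9} by integration by parts. For the extension, I multiply $\varphi$ by the Lipschitz cutoffs $\varphi_n$ from \eqref{eq:65}, apply \eqref{eq:11} to $(f,\varphi\varphi_n)\in\dom_{L^4}(\Delta)\times W^{1,2}_\rmc$, and pass to the limit $n\to\infty$ using the Leibniz decompositions $\Gbil{\Gq f}{\varphi\varphi_n}=\varphi_n\Gbil{\Gq f}{\varphi}+\varphi\Gbil{\Gq f}{\varphi_n}$ and $\Gbil{f}{\varphi\varphi_n}=\varphi_n\Gbil{f}{\varphi}+\varphi\Gbil{f}{\varphi_n}$. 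The terms carrying $\varphi_n$ as multiplicative factor converge by dominated convergence; for the remainder containing $\Gbil{\Gq f}{\varphi_n}$, one follows the scheme of Theorem~\ref{thm:medusa}(b), combining $\Gq{\Gq f}\le 4\gamma\,\Gq f$, Cauchy-Schwarz and Young to get
\[
\int\varphi\,|\Gbil{\Gq f}{\varphi_n}|\,\d\mm\le \eps\int\gamma\,\varphi^2\,\d\mm+\frac{\|\varphi\|_\infty^2}{\eps\,4^n}\int\Gq f\,\d\mm,
\]
which vanishes by first sending $n\to\infty$ and then $\eps\downarrow 0$; the remainder with $\Delta f\cdot\varphi\,\Gbil{f}{\varphi_n}$ is controlled by $2^{-n}\|\Delta f\|_{L^2}\|\varphi\|_\infty\|\sqrt{\Gq f}\|_{L^2}\to 0$. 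Once \eqref{eq:11} is established for all such $\varphi$, I convert it to \eqref{eq:9}: since $\Gq f\in\V$ and $\varphi\in\dom(\Delta)$, $-\int\Gbil{\Gq f}{\varphi}\,\d\mm=\int\Gq f\,\Delta\varphi\,\d\mm$; since $\varphi\Delta f\in\V$ (using $\Delta f\in L^2\cap L^\infty$ and $\varphi\in\V\cap L^\infty$) and $f\in\dom(\Delta)$, the Leibniz identity $\Gbil{f}{\varphi\Delta f}=\varphi\,\Gbil{f}{\Delta f}+\Delta f\,\Gbil{f}{\varphi}$ yields $\int\Delta f\,\Gbil{f}{\varphi}\,\d\mm=-\int\varphi(\Delta f)^2\,\d\mm-\int\varphi\,\Gbil{f}{\Delta f}\,\d\mm$, and substituting these identities into the extended form of \eqref{eq:11} reproduces exactly \eqref{eq:9}.

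For the Sobolev-to-Lipschitz property, let $f\in W^{1,2}(X,\sfd,\mm)\cap L^\infty(X,\mm)$ with $\|\Gq f\|_{L^\infty}\le 1$. Proposition~\ref{prop:localK}(a) gives $f\in W^{1,2}(\bar U_i,\sfd,\mm\llcorner\bar U_i)$ with the same bound, and the metric $\BE(K,\infty)$ condition on $\bar U_i$ yields a 1-Lipschitz (with respect to $\sfd$) representative $f_i:\bar U_i\to\R$. Since $\supp(\mm)=X$, continuous representatives agreeing $\mm$-a.e.\ on $\bar U_i\cap\bar U_j$ agree everywhere there, so the $f_i$ glue to a continuous function $\tilde f$ on $X$ with $\tilde f=f$ $\mm$-a.e. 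For $x,y\in X$ and $\eps>0$, the length condition produces a curve $\gamma:[0,1]\to X$ from $x$ to $y$ with $\mathrm{length}(\gamma)\le\sfd(x,y)+\eps$; compactness of $\gamma([0,1])$ and the Lebesgue number lemma furnish a partition $0=t_0<\dots<t_m=1$ and indices $i_1,\dots,i_m$ with $\gamma([t_{k-1},t_k])\subset\bar U_{i_k}$, and the telescoping estimate
\[
|\tilde f(x)-\tilde f(y)|\le\sum_{k=1}^m\sfd(\gamma(t_{k-1}),\gamma(t_k))\le \mathrm{length}(\gamma)\le\sfd(x,y)+\eps
\]
gives the 1-Lipschitz bound upon $\eps\downarrow 0$. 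The principal technical obstacle lies in the cutoff passage to the limit in the $\BE$ step: the remainder terms must be bounded via the structural inequality $\Gq{\Gq f}\le 4\gamma\,\Gq f$ of Theorem~\ref{cor:local} rather than bare Cauchy-Schwarz, since the exponential volume growth allowed by \eqref{eq:grygorian} would otherwise prevent convergence of the integrals on the annuli $B_{2^{n+1}}\setminus B_{2^n}$.
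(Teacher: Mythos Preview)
Your proof is correct and follows essentially the same strategy as the paper: reduce via Lemma~\ref{lem:Approximation} to $f\in\dom_\V(\Delta)\cap\dom_{L^\infty}(\Delta)$, invoke Theorem~\ref{cor:local} for compactly supported test functions and Theorem~\ref{thm:medusa} for $\Gq f\in\V$, remove the compact-support restriction by the cutoffs $\varphi_n$ of \eqref{eq:65}, and handle the Sobolev-to-Lipschitz property by gluing local representatives along rectifiable curves. The only organizational difference is that the paper first rewrites $\GGamma_2(f;\varphi)$ in the integrated-by-parts form and then takes the limit in $n$, whereas you first extend the inequality \eqref{eq:11} and then convert back to \eqref{eq:9}; both orderings are equivalent, and your more detailed treatment of the remainder terms (via $\Gq{\Gq f}\le 4\gamma\,\Gq f$) is valid though slightly heavier than needed, since $\Gq f\in\V$ already gives $\sqrt{\Gq{\Gq f}}\in L^2$ and $\varphi\in L^2$ suffices to kill the cross term.
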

\begin{proof} 
  By applying Lemma~\ref{lem:Approximation}, 
  for every $f\in \dom_\V(\Delta)$ we can find
  $f_k\in \dom_\V(\Delta) \cap \dom_{L^\infty}(\Delta)\subset
  \dom_{L^4}(\Delta)\cap L^\infty(X,\mm)$
  with $f_k\to f$ and $\Delta f_k\to\Delta f$ in $\V$ as $k\to\infty$.
  If $\varphi\in \dom_{L^\infty}(\Delta)$ is nonnegative, an integration by parts and Theorem~\ref{thm:medusa} give
  $$
  \GGamma_2(f_k;\varphi)=
    \int_X\Big( -\frac 12\Gbil{\Gq{f_k}}\varphi+\Delta
    f_k\Gbil{f_k}\varphi+
    (\Delta f_k)^2\varphi\Big)\,\d\mm.
  $$
  Therefore, if $\varphi_n$ is defined by \eqref{eq:65}, we can apply
  Theorem~\ref{cor:local} to get
  \begin{align}\label{eq:september9}
    \GGamma_2(f_k;\varphi)&=\lim_{n\to\infty}
    \int_X\Big( -\frac 12\Gbil{\Gq{f_k}}{\varphi\varphi_n}+\Delta
    f_k\Gbil{f_k}{\varphi\varphi_n}+
    (\Delta f_k)^2{\varphi\varphi_n}\Big)\,\d\mm
    \\&\topref{eq:11}\ge 
    \lim_{n\to\infty}\int_X \Big(K\Gq {f_k}+\nu(\Delta
    f_k)^2\Big)\varphi\varphi_n\,\d\mm
    =\int_X \Big(K\Gq {f_k}+\nu(\Delta
    f_k)^2\Big)\varphi\,\d\mm.\nonumber
  \end{align}
  We can use the convergence
  of $f_k$ and $\Delta f_k$ in $\V$ to obtain that $\GGamma_2(f_k;\varphi)$ converges
  to $\GGamma_2(f;\varphi)$.
  Therefore, passing to the limit as $k\to\infty$ in \eqref{eq:september9} we obtain the
  $\BE(K,N)$ condition.
  
  In order to conclude, it suffices to show
  that any essentially bounded 
  $f\in W^{1,2}(X,\sfd,\mm)$ with
  $\|\weakgrad{f}\|_{L^\infty(X,\mm)}\leq 1$ 
  has a $1$-Lipschitz representative.
  Clearly, the fact that $(\bar{U}_i,\sfd,\mm\llcorner{\bar{U}_i})$ 
  satisfy the metric $\BE(K,\infty)$ condition implies that $f$ has a $1$-Lipschitz
  representative on $\bar{U}_i$, 
  and therefore $f$ has a locally $1$-Lipschitz representative $\tilde
  {f}$ on $\cup_i U_i$. 
  If we consider an
  absolutely continuous curve
  $\gamma$ connecting $x$ to $y$, this easily yields (by a covering argument) 
  $$
  |\tilde{f}(x)-\tilde{f}(y)|\leq {\rm length}(\gamma).
  $$
  Since $(X,\sfd)$ is a length space (in fact geodesic), we conclude.
\end{proof}

\section{$\RCD^*(K,N)$ spaces and their localization and
  globalization}
\label{sec:RCD}

In the next two subsections we introduce the $\RCD(K,\infty)$ and $\RCD^*(K,N)$ spaces, and discuss
their equivalent characterizations as well as their localization and globalization properties; 
the case $N=\infty$ is by now well established \cite{AGS12}, while the dimensional case
is more recent \cite{Erbar-Kuwada-Sturm13}, \cite{Ambrosio-Mondino-Savare13}.

\subsection{The case $N=\infty$}

We say that $(X,\sfd,\mm)\in\XX$ is a $\RCD(K,\infty)$ space if the Shannon entropy 
$\mathcal U_\infty:\ProbabilitiesTwo{X}\to (-\infty,+\infty]$
\begin{equation}\label{eq:shannon}
\mathcal U_\infty(\mu):=
\begin{cases}
\int_X\rho\log\rho\,\d\mm &\text{if $\mu=\rho\mm$;}\\\\
+\infty &\text{otherwise}
\end{cases}
\end{equation}
is convex along Wasserstein geodesics. More precisely, here $\ProbabilitiesTwo{X}$ stands for the space of Borel probability 
measures with finite quadratic moments and condition \eqref{eq:grygorian} guarantees that the negative part of $\rho\log\rho$ is integrable for any
$\mu=\rho\mm\in\ProbabilitiesTwo{X}$, see \cite{AGS11b} for details. Hence, \eqref{eq:shannon} makes sense.

If we endow $\ProbabilitiesTwo{X}$ with the
quadratic Wasserstein distance $W_2$, we say that $(X,\sfd,\mm)\in\XX$ is a $\RCD(K,\infty)$ space if for all  
$\mu_0=\rho_0\mm$, $\mu_1=\rho_1\mm$ in $\ProbabilitiesTwo{X}$ and for every constant speed geodesic $\mu_t$
in $\ProbabilitiesTwo{X}$ from $\mu_0$ to $\mu_1$, for all $t\in [0,1]$ there holds $\mu_t=\rho_t\mm$ and
\begin{equation}\label{eq:Kshannon}
\int_X\rho_t\log\rho_t\,\d\mm\leq (1-t)\int_X\rho_0\log\rho_0\,\d\mm+t\int_X\rho_1\log\rho_1\,\d\mm-\frac{K}{2}t(1-t)W_2^2(\mu_0,\mu_1).
\end{equation}

This class of spaces has been introduced in \cite{AGS11b}, where one of the main results is also the equivalence with another
entropic formulation, based on the so-called ${\sf EVI}_K$ property of the Shannon entropy along the heat flow. The definition adopted here 
has been later on improved in \cite{AGMR12} (asking the convexity inequality along \emph{some} geodesic, and then
recovering convexity along \emph{all} geodesics out of the ${\sf EVI}_K$ theorem \cite{Daneri-Savare08}), see also 
\cite{Rajala-Sturm12} for new recent developments. While this characterization
is extremely useful in the proof of stability properties \cite{AGS11b,AGS12,GMS13}, in the proof of localization or globalization properties it suffers the same limitations 
described in Remark~\ref{rem:BEglobal}. It is instead crucial for us the following connection between $\RCD(K,\infty)$ and
$\BE(K,\infty)$, obtained in \cite{AGS12}.

\begin{theorem}[Equivalence of $\RCD(K,\infty)$ and $\BE(K,\infty)$]\label{thm:BE7}
Let $(X,\sfd,\mm)\in\XX$. Then $(X,\sfd,\mm)$ is $\RCD(K,\infty)$ 
if and only if it satisfies the metric $\BE (K,\infty)$ condition.
\end{theorem}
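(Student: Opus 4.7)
The plan is to invoke the main equivalence theorem of~\cite{AGS12}, whose hypotheses match exactly the assumptions on $(X,\sfd,\mm)\in\XX$ collected in Section~\ref{subsec:MMS} once $\cE$ is identified with the Dirichlet form induced by the Cheeger energy $\C$. Both implications pass through the pointwise gradient estimate~\eqref{eq:77} for the heat semigroup $(\sfP_t)_{t\ge 0}$, which acts as the natural bridge between the optimal-transport side ($\RCD$) and the Dirichlet-form side ($\BE$).

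For the implication $\RCD(K,\infty)\Rightarrow\BE(K,\infty)$, the first step is to upgrade $K$-convexity of the Shannon entropy $\mathcal U_\infty$ along $W_2$-geodesics to the stronger $\mathrm{EVI}_K$ characterization of the heat flow: this uses the identification of $(\sfP_t)$ as the $W_2$-gradient flow of $\mathcal U_\infty$, which in turn relies on the infinitesimal Hilbertianity encoded in the class $\XX$, together with the uniqueness results of~\cite{Daneri-Savare08}. The $\mathrm{EVI}_K$ property encodes the $W_2$-contraction $W_2(\sfP_t\mu,\sfP_t\nu)\le e^{-Kt}W_2(\mu,\nu)$, which by the Kuwada-type duality extended to the nonsmooth setting in~\cite{AGS12} is equivalent to the pointwise gradient estimate~\eqref{eq:77}. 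Testing \eqref{eq:77} against $\varphi\in\dom_{L^\infty}(\Delta)$, $\varphi\ge 0$, and differentiating at $t=0$ yields the weak Bochner inequality~\eqref{eq:9} with $N=\infty$. The Sobolev-to-Lipschitz property~\eqref{eq:26} built into Definition~\ref{def:topo-BE} comes along for free from the $K$-convexity of the entropy, via the arguments in~\cite[\S~6]{AGS11b}.

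For the converse $\BE(K,\infty)\Rightarrow \RCD(K,\infty)$ I would run the same chain in reverse: starting from \eqref{eq:9}, the equivalent pointwise estimate \eqref{eq:77} is available by \cite[Corollary~2.3]{AGS12}; Kuwada's argument then produces the $W_2$-contractivity of $(\sfP_t)$ on probability measures in $\ProbabilitiesTwo{X}$; combining this contraction with the identification of $(\sfP_t)$ as the $W_2$-gradient flow of $\mathcal U_\infty$ and applying the abstract $\mathrm{EVI}_K$ machinery, one concludes $K$-geodesic convexity of $\mathcal U_\infty$, which is precisely the definition of $\RCD(K,\infty)$.

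The main obstacle, and the reason~\eqref{eq:26} is included as part of the \emph{metric} $\BE(K,\infty)$ condition, is the compatibility between the original distance $\sfd$ and the intrinsic distance $\sfd_\cE$ induced by $\cE$: the pointwise gradient estimate~\eqref{eq:77} only controls $\sfd_\cE$, whereas $W_2$ on the $\RCD$ side is built from $\sfd$. The Sobolev-to-Lipschitz condition~\eqref{eq:26} forces $\sfd_\cE=\sfd$, so that the analytic and metric/transport sides speak the same language and the two chains above can be concatenated. Once this identification is in place, the equivalence becomes a formal consequence of the results recalled from \cite{AGS11b,Daneri-Savare08,AGS12}.
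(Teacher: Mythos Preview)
The paper does not give its own proof of this theorem: it simply records the result as ``obtained in \cite{AGS12}'' and then moves on. Your sketch is therefore being measured against the argument of~\cite{AGS12} rather than anything in the present paper, and at the level of an outline it is broadly faithful to that reference and to the surrounding discussion here (the role of~\eqref{eq:26} in forcing $\sfd_\cE=\sfd$, the passage through~\eqref{eq:77}, the $\mathrm{EVI}_K$ characterization from~\cite{AGS11b,Daneri-Savare08}).

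One point worth sharpening in the $\BE(K,\infty)\Rightarrow\RCD(K,\infty)$ direction: $W_2$-contraction of the semigroup does not by itself imply $\mathrm{EVI}_K$ (contraction is strictly weaker), so the phrase ``combining this contraction with the identification of $(\sfP_t)$ as the $W_2$-gradient flow of $\mathcal U_\infty$ and applying the abstract $\mathrm{EVI}_K$ machinery'' glosses over the actual work. In~\cite{AGS12} the $\mathrm{EVI}_K$ inequality is obtained directly from the pointwise gradient estimate~\eqref{eq:77} via action/slope estimates along curves of measures and the identification of the descending slope of $\mathcal U_\infty$ with the Fisher information; the $W_2$-contraction is a consequence rather than an input. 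Apart from this, your plan matches the cited approach.
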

Notice that the assumption that functions with bounded relaxed gradient have a continuous representative is necessary,
in conjunction with $\BE(K,\infty)$, to have $\RCD(K,\infty)$: this way simple examples where $\C\equiv 0$ and $\BE(K,\infty)$
obviously holds (see for instance \cite[Remark~4.12]{AGS11b}) are ruled out. For the reader's
convenience, we state the Global-to-Local property, see \cite[Theorem~6.20]{AGS11b} for the proof, relying on the fact that
one can find geodesics connecting probability measures in $\bar{U}$ lying entirely in $\bar{U}$.

\begin{proposition} [Global-to-Local for $RCD(K,\infty)$]\label{prop:globaloca} Let $(X,\sfd,\mm)\in\XX$ be $\RCD(K,\infty)$ and let $U\subset X$ be open.
If $\mm(\partial U)=0$ and $(\bar{U},\sfd)$ is geodesic, then $(\bar{U},\sfd,\mm\llcorner\bar{U})$ is $\RCD(K,\infty)$.
\end{proposition}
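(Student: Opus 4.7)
The plan is to verify both parts of the $\RCD(K,\infty)$ definition for $(\bar U,\sfd,\mm\llcorner\bar U)$ by pulling back to $X$. Proposition~\ref{prop:localK}(b) combined with the assumption $\mm(\partial U)=0$ immediately gives $(\bar U,\sfd,\mm\llcorner\bar U)\in\XX$, which handles the infinitesimally Hilbertian requirement and the basic structural assumptions; only the displacement $K$-convexity of the Shannon entropy remains to be established.

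Fix $\mu_0,\mu_1\in\ProbabilitiesTwo{\bar U}$ absolutely continuous with respect to $\mm\llcorner\bar U$, with densities $\rho_0,\rho_1$. Extended by zero outside $\bar U$, the $\mu_i$ are also absolutely continuous with respect to $\mm$, and
\begin{displaymath}
\int_X\rho_i\log\rho_i\,\d\mm=\int_{\bar U}\rho_i\log\rho_i\,\d(\mm\llcorner\bar U).
\end{displaymath}
Moreover, any transport plan between measures supported in $\bar U$ is automatically supported in $\bar U\times\bar U$, so the Wasserstein distance in $\ProbabilitiesTwo{\bar U}$ coincides with the one computed in $\ProbabilitiesTwo{X}$. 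Because $(\bar U,\sfd)$ is geodesic, the standard lifting of an optimal plan to a dynamic plan concentrated on minimizing geodesics of $\bar U$ (via the evaluation maps $e_t$) produces a constant speed $W_2$-geodesic $(\mu_t)_{t\in[0,1]}$ in $\ProbabilitiesTwo{\bar U}$ with $\mu_t$ supported in $\bar U$ for every $t$; by the equality of distances just noted, $(\mu_t)$ is a $W_2$-geodesic in $\ProbabilitiesTwo X$ as well.

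Now apply $\RCD(K,\infty)$ for $(X,\sfd,\mm)$ along $(\mu_t)$: each $\mu_t=\rho_t\mm$ is absolutely continuous with respect to $\mm$ and satisfies \eqref{eq:Kshannon} in $X$. Since $\mu_t$ is supported in $\bar U$, it is absolutely continuous with respect to $\mm\llcorner\bar U$ with the same density, and its $\mm$-entropy coincides with its $\mm\llcorner\bar U$-entropy exactly as at the endpoints; the Wasserstein distance is unchanged too. Hence \eqref{eq:Kshannon} transfers verbatim to $(\bar U,\sfd,\mm\llcorner\bar U)$ along the constructed geodesic, which is enough in view of the improvement of the $\RCD(K,\infty)$ definition recalled in the paper (convexity along \emph{some} geodesic suffices). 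The main obstacle is guaranteeing that the geodesic stays inside $\bar U$, which is precisely where the hypothesis that $(\bar U,\sfd)$ is geodesic enters: without it, intermediate $\mu_t$ could escape $\bar U$ and the identification between $\mm$-entropy and $\mm\llcorner\bar U$-entropy would break down.
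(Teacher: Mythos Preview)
Your proof is correct and follows essentially the same route as the paper, which does not spell out the argument but refers to \cite[Theorem~6.20]{AGS11b} and identifies the key point as ``one can find geodesics connecting probability measures in $\bar U$ lying entirely in $\bar U$''; this is exactly what your use of the geodesic hypothesis on $(\bar U,\sfd)$ secures, and your appeal to Proposition~\ref{prop:localK}(b) for the $\XX$-membership matches the paper's outline of the $\RCD^*(K,N)$ analogue.
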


The proof of the Local-to-Global property, established under the non-branching condition in \cite{Sturm06I}, heavily relies
on the $\BE(K,\infty)$ characterization of Theorem~\ref{thm:BE7}. 
Notice that the only global assumptions are
\eqref{eq:grygorian} and the length property (necessary already
for subsets of Euclidean spaces).

\begin{theorem} [Local-to-Global for $\RCD(K,\infty)$] \label{thm:locagloba2}
  Let $(X,\sfd,\mm)\in\XX$ be a length and locally compact 
space and assume that there exists a covering $\{U_i\}_{i\in I}$  of
$X$ by non-empty open 
subsets such that
$(\bar{U}_i,\sfd,\mm\llcorner{\bar{U}_i})\in \XX$ satisfy $\RCD(K,\infty)$. 

Then $(X,\sfd,\mm)$ is a $\RCD(K,\infty)$ space. 
\end{theorem}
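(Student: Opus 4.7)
The plan is to chain together the two equivalences and the $\BE$-localization result already in the paper. Specifically, I would argue:

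\textbf{Step 1 (Pass to the Bakry--\'Emery side locally).} For each $i\in I$, the metric measure space $(\bar U_i,\sfd,\mm\llcorner\bar U_i)\in\XX$ is $\RCD(K,\infty)$ by hypothesis, so Theorem~\ref{thm:BE7} applied in $(\bar U_i,\sfd,\mm\llcorner\bar U_i)$ yields that each $(\bar U_i,\sfd,\mm\llcorner\bar U_i)$ satisfies the metric $\BE(K,\infty)$ condition of Definition~\ref{def:topo-BE}.

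\textbf{Step 2 (Globalize on the $\BE$ side).} Since $(X,\sfd,\mm)\in\XX$ is length and locally compact, and the collection $\{U_i\}_{i\in I}$ is an open covering of $X$ by non-empty open sets whose closures (with the restricted measure) satisfy metric $\BE(K,\infty)$ by Step~1, we can invoke Theorem~\ref{thm:LTGComp} with $N=\infty$. This produces the metric $\BE(K,\infty)$ condition for the whole space $(X,\sfd,\mm)$.

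\textbf{Step 3 (Return to the entropic side globally).} Since $(X,\sfd,\mm)\in\XX$ satisfies the metric $\BE(K,\infty)$ condition, Theorem~\ref{thm:BE7} applied in the converse direction to $(X,\sfd,\mm)$ gives that $(X,\sfd,\mm)$ is $\RCD(K,\infty)$, as desired.

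\textbf{Where the work is hidden.} The content of the argument is not really in this three-line deduction but in the two theorems being invoked. The genuine difficulty lies behind Theorem~\ref{thm:LTGComp}, whose proof uses the partition of unity from Proposition~\ref{pro:PartOf1}, the improved local regularity of $\Gamma(f)$ (Lemma~\ref{lem:ImprInt} and Theorem~\ref{cor:local}), and the finiteness of $\Gamma^\star_{2,K}[f]$ together with the extension $\Gq f\in W^{1,2}(X,\sfd,\mm)$ provided by Theorem~\ref{thm:medusa}; and behind Theorem~\ref{thm:BE7}, which rests on the $\RCD(K,\infty)$ theory developed in \cite{AGS11b,AGS12}. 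In particular, the subtlety of the local-to-global passage—namely that $\BE(K,\infty)$ has a global character (cf.\ Remark~\ref{rem:BEglobal}) because an integration by parts is built into its weak formulation—has already been handled there via the cutoff functions with bounded Laplacian and the higher integrability of $\Gamma(f)$. Once those tools are in place, the $\RCD$ Local-to-Global theorem is simply the combination of the $\BE$ globalization with the equivalence in both directions, and no further argument is required.
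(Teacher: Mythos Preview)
Your proof is correct and matches the paper's own argument exactly: the paper states that the result ``is an immediate consequence of Theorem~\ref{thm:BE7} and Theorem~\ref{thm:LTGComp},'' which is precisely your Steps~1--3. Your additional commentary on where the real work is hidden is accurate and well placed.
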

The \emph{proof} is an immediate consequence of Theorem~\ref{thm:BE7}
and Theorem~\ref{thm:LTGComp}. 

\subsection{The case $N<\infty$}

For $N\geq 1$, let $U_N:[0,\infty)\to\R$ be defined by $U_N(r):=N(r-r^{1-1/N})$.
The induced dimension-dependent R\'enyi entropy functionals $\mathcal U_N$ (whose limit as $N\to\infty$ 
is the Shannon entropy $\mathcal U_\infty$ in \eqref{eq:shannon}) is defined by 
\begin{equation}
  \label{eq:41}
  \mathcal U_N(\mu):=\int_X U_N(\varrho)\,\d\mm+N\mu^\perp(X)
  \quad\text{if }\mu=\varrho\mm+\mu^\perp, \quad
  \mu^\perp\perp\mm.
\end{equation}
Since $U_N(0)=0$ and the negative part of $U$ grows
at most linearly, $\mathcal U_N$ is well defined and with values in $\R$ if $\mu$ has bounded support.

We now introduce, for $\kappa\in\R$, the distortion coefficients
\begin{equation}
  \label{eq:170}
  \sigma^{(t)}_\kappa(\delta):=
  \begin{cases}
    +\infty&\text{if }\kappa\ge \pi^2,\\
    \displaystyle
    \frac{\sin(t\sqrt \kappa \delta)}{\sin(\sqrt\kappa\delta)}&
    \text{if }0<\kappa<\pi^2\\
    t&\text{if }\kappa=0,\\
    \displaystyle
    \frac{\sinh(t\sqrt{- \kappa} \delta)}{\sin(\sqrt{-\kappa}\delta)}&
    \text{if }\kappa<0.
  \end{cases}
\end{equation}

The so-called $\CD^*(K,N)$ condition introduced by Bacher and Sturm in \cite{Bacher-Sturm10} is based, in analogy with the case $N=\infty$, on
a convexity inequality of $\mathcal U_N$ along Wasserstein geodesics; it is a variant of the $\CD(K,N)$ condition originally introduced 
by Sturm and studied in \cite{Lott-Villani09}, \cite{Sturm06I,Sturm06II} (based on a different choice of the distortion coefficients in \eqref{eq:177} below). Here we just mention that $\CD_{{\rm loc}}(K,N)$ is equivalent to $\CD^*(K,N)$, and this
fact strongly suggests that the latter should have better globalization/localization properties. For the purpose of this paper,
we just define the ``Riemannian'' $\CD^*(K,N)$ condition,  adding the condition that $\C$ is a quadratic form.

\begin{definition}[$\RCD^*(K,N)$ condition]
  For $K\in\R$ and $N\in [1,\infty)$, we say that $(X,\sfd,\mm)\in\XX$ satisfies the
  $\RCD^*(K,N)$ condition if 
  for every $\mu_i=\varrho_i\mm\in \Probabilities X$, $i=0,1$,
  with bounded support, for all constant speed geodesic $\mu_s:[0,1]\to\ProbabilitiesTwo{X}$ from $\mu_0$ to
  $\mu_1$ and 
  for every $M\ge N$ there holds 
  \begin{equation}
    \label{eq:177}
    \mathcal U_M(\mu_s)\le 
    \int \Big(\sigma_{K/M}^{(1-s)}(\sfd(\gamma_0,\gamma_1))
    \varrho_0(\gamma_0)^{-1/M}+
    \sigma_{K/M}^{(s)}(\sfd(\gamma_0,\gamma_1))
    \varrho_1(\gamma_1)^{-1/M}\Big)\,\d\ppi(\gamma),
  \end{equation}
  where $\sigma_\kappa$ is defined in \eqref{eq:170} and $\mathcal U_M$ is defined in \eqref{eq:41}.  
 \end{definition}

The following result, extending \cite{AGS12} to the dimensional case, has been proved in \cite{Ambrosio-Mondino-Savare13}
using, from this paper, only the ``abstract'' regularity estimates in $\BE(K,N)$ Dirichlet spaces derived in \S3; 
see also Remark~\ref{rem:otherRCD} below for the closely related result \cite{Erbar-Kuwada-Sturm13}.

\begin{theorem}[Equivalence of $\RCD^*(K,N)$ and $\BE(K,N)$]\label{thm:BE8}
If $(X,\sfd,\mm)\in\XX$ satisfies the metric $\BE(K,N)$ condition 
then $(X,\sfd,\mm)$ is $\RCD^*(K,N)$. The converse holds if $\mm(X)$
is finite or $K\ge 0$.
\end{theorem}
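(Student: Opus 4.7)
Since $\BE(K,N)$ plainly entails $\BE(K,\infty)$ and $\RCD^*(K,N)$ entails $\RCD(K,\infty)$ (by sending $M\to\infty$ in \eqref{eq:177} and using an envelope argument for the distortion coefficients \eqref{eq:170}), both directions start by invoking Theorem~\ref{thm:BE7} to obtain for free the infinite-dimensional side of the equivalence. What remains, in both directions, is to transfer the dimensional parameter $N$ between the Bochner inequality \eqref{eq:9} and the entropy-convexity inequality \eqref{eq:177}.

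\emph{From $\BE(K,N)$ to $\RCD^*(K,N)$.} The plan is to promote the pointwise gradient estimate \eqref{eq:77} to the Bakry--Ledoux dimensional refinement
\begin{equation*}
  \Gq{\sfP_t f}+\frac{2\,\rmI_{2K}(t)^2}{N\,t}\,(\Delta \sfP_t f)^2
  \le \rme^{-2Kt}\,\sfP_t\big(\Gq f\big)
  \qquad \mm\text{-a.e.\ in }X,
\end{equation*}
via the standard $\Gamma$-calculus computation of $\frac{\d}{\d s}\bigl[\rme^{-2Ks}\sfP_s(\Gq{\sfP_{t-s}f})\bigr]$ combined with \eqref{eq:9}. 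The non-trivial point is to carry out this differentiation rigorously without any a priori smoothness: this is exactly what the interpolation inequality of Theorem~\ref{thm:interpolation}, the regularity $\Gq f\in\dom(\cE)$ of Theorem~\ref{thm:crucial}, and the extended Bochner inequality of Corollary~\ref{cor:weaker-assumption-G2} are designed to provide. Then one translates the refined pointwise estimate into an action estimate for the dual heat semigroup on $\ProbabilitiesTwo{X}$ via Kuwada's duality, and plugs this action estimate into the Hopf--Lax formalism along a $W_2$-geodesic $(\mu_s)_{s\in[0,1]}$ between absolutely continuous measures. Since $\RCD(K,\infty)$ is already at hand, such geodesics with good entropy bounds exist and the resulting inequality is exactly \eqref{eq:177} for $\mathcal U_M$, $M\ge N$.

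\emph{From $\RCD^*(K,N)$ to $\BE(K,N)$.} The plan is the dual one: fix $f\in \dom_\V(\Delta)\cap\dom_{L^\infty}(\Delta)$ and a nonnegative $\varphi\in \dom_{L^\infty}(\Delta)$, and test \eqref{eq:177} along a Wasserstein geodesic connecting two first-order perturbations of a reference measure of the form $(1+\eps\varphi)\mm$, with velocity potential $f$. A double time-differentiation at the endpoint, performed formally, produces on the left-hand side precisely $\GGamma_2(f;\varphi)$ and on the right-hand side $K\int\Gq f\,\varphi\,\d\mm+\tfrac1N\int(\Delta f)^2\varphi\,\d\mm$. The rigorous justification uses the mollified heat flow $\mathfrak H^\eps$ from \eqref{eq:55} to smoothen both $f$ and $\varphi$, Lemma~\ref{lem:Approximation} to reduce to a dense class, and the measure-valued $\Gamma_2^\star$-tensor of Corollary~\ref{cor:crucial} to identify the Hessian-type term that appears at second order. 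The restriction $\mm(X)<\infty$ or $K\ge 0$ enters precisely here: it guarantees that $\mathcal U_N$ is well defined and lower semicontinuous on the class of measures produced by the heat flow, with no boundary term at infinity when differentiating.

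\emph{Main obstacle.} The delicate step is the converse direction: extracting a pointwise (indeed Bochner-type) inequality from a global entropy convexity statement requires controlling the second variation of $\mathcal U_N$ along $W_2$-geodesics in a non-smooth space. This is only possible because the a priori regularity of $\Gq f$ obtained in \S\ref{sec:further-regularity}, together with the representation of the $\Gamma_2$-tensor as a nonnegative measure absorbing singular contributions, gives a rigorous surrogate for the Hessian of $f$ that is stable under the approximations needed to test the inequality on non-smooth functions.
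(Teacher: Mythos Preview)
The paper does not contain a proof of Theorem~\ref{thm:BE8}. Immediately before the statement the authors write that the result ``has been proved in \cite{Ambrosio-Mondino-Savare13} using, from this paper, only the `abstract' regularity estimates in $\BE(K,N)$ Dirichlet spaces derived in \S3'', and that reference is listed in the bibliography as ``In progress''. Remark~\ref{rem:otherRCD} then describes the two strategies available in the literature --- the nonlinear-diffusion approach of \cite{Ambrosio-Mondino-Savare13} via the gradient flow of the R\'enyi entropy $\mathcal U_N$, and the linear heat-flow approach of \cite{Erbar-Kuwada-Sturm13} via the modified Shannon entropy $\tilde{\mathcal U}_N$ --- but neither is carried out here. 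There is therefore no proof in the paper against which to compare your proposal.

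For what it is worth, your forward direction (dimensional Bakry--Ledoux refinement of \eqref{eq:77}, Kuwada duality, action estimate along the heat semigroup) is the \cite{Erbar-Kuwada-Sturm13} route, not the one the authors attribute to \cite{Ambrosio-Mondino-Savare13}: the latter, as Remark~\ref{rem:otherRCD} makes explicit, analyses the \emph{nonlinear} diffusion generated by $\mathcal U_N$, which is a substantially different and technically heavier argument. Your converse direction --- second variation of \eqref{eq:177} along perturbed $W_2$-geodesics --- is the right heuristic, but turning that into a rigorous derivation of the pointwise inequality \eqref{eq:9} in a non-smooth space is exactly the content deferred to the companion paper; the present paper only supplies the regularity ingredients (Theorems~\ref{thm:interpolation} and \ref{thm:crucial}, Corollary~\ref{cor:crucial}) that feed into it. Your sketch is a reasonable road map, but it is not something this paper proves, and it cannot be checked against anything written here.
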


\begin{remark}[Other characterizations of $\RCD^*(K,N)$]\label{rem:otherRCD}
\upshape
{\rm As in the case $N=\infty$, another characterization of $\RCD^*(K,N)$ has been given in
 \cite{Ambrosio-Mondino-Savare13} in terms of suitable Evolution Variation Inequalities (${\sf EVI}$)
satisfied by the gradient flow of the Reny entropy $\mathcal U_N$, with a modulus of continuity proportional 
to $K$ and dependent on $N$ (in the limit case $N=\infty$ the modulus is proportional to the squared Wasserstein distance). 
This requires a detailed analysis of the gradient flow of the Reny entropy $\mathcal U_N$, a nonlinear diffusion equation.
In this connection, 
a remarkable result obtained in \cite{Erbar-Kuwada-Sturm13} is the characterization of the $\BE(K,N)$ property in terms of an ${\sf EVI}$ property 
fulfilled, along the heat flow, by the modified Shannon entropy
$$
\tilde{\mathcal U}_N(\mu):={\rm exp}\biggl(-\frac 1 N\mathcal U_\infty(\mu)\biggr).
$$
This has the advantage of avoiding many technical difficulties related to nonlinear diffusion equations in metric measure spaces.
The definition of $\RCD^*(K,N)$ adopted in \cite{Erbar-Kuwada-Sturm13} is actually based on this ${\sf EVI}$ property and,
due to the equivalence with $\BE(K,N)$ proved in that paper, our Local-to-Global result applies to this definition as well.

However, as we discussed in the previous subsection, all these ${\sf EVI}_{K,N}$ formulations, while technically important to get stability and
convexity properties on \emph{all} geodesics, are less relevant in the study of localization/globalization
properties.
}\fr
\end{remark}

\begin{proposition} [Global-to-Local for $\RCD^*(K,N)$] Let $(X,\sfd,\mm)\in\XX$ be $\RCD^*(K,N)$ and let $U\subset X$ be open.
If $\mm(\partial U)=0$ and $(\bar{U},\sfd)$ is geodesic, then $(\bar{U},\sfd,\mm\llcorner\bar{U})$ is $\RCD^*(K,N)$.
\end{proposition}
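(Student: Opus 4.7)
The approach is to lift the optimal transport structure from $\bar U$ to $X$, apply the global $\RCD^*(K,N)$ condition there, and restrict the resulting inequality, in complete analogy with Proposition~\ref{prop:globaloca} for the case $N=\infty$. Firstly, $(\bar U,\sfd,\mm\llcorner\bar U)\in\XX$ is an immediate consequence of $\mm(\partial U)=0$ and Proposition~\ref{prop:localK}(b), which in particular ensures that the Cheeger energy of the restricted space is quadratic. The only remaining task is thus to verify the convexity inequality \eqref{eq:177} in the restricted space for all bounded-support absolutely continuous probabilities.

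Next, I will fix $\mu_i=\varrho_i\,(\mm\llcorner\bar U)\in\mathscr P(\bar U)$, $i=0,1$, with bounded support, extend them trivially to Borel probabilities $\tilde\mu_i$ on $X$, and---using that $(\bar U,\sfd)$ is a complete separable geodesic space---select an optimal geodesic plan $\ppi\in\mathscr P(\geo(\bar U))$ representing a constant speed $W_2$-geodesic $(\mu_s)$ from $\mu_0$ to $\mu_1$. The key observation is that, since $\sfd$ is inherited and every $\bar U$-geodesic is an $X$-geodesic of the same length, the relevant transport quantities are preserved: $W_2^{\bar U}(\mu_0,\mu_1)=W_2^X(\tilde\mu_0,\tilde\mu_1)$; the plan $\ppi$, viewed as a probability on $\geo(X)$, still represents a constant speed $W_2$-geodesic $\tilde\mu_s$ in $\mathscr P_2(X)$; the distances $\sfd(\gamma_0,\gamma_1)$ on $\supp(\ppi)$ are unchanged; and, since $\tilde\mu_s$ is concentrated in $\bar U$, its Lebesgue decomposition with respect to $\mm$ matches that of $\mu_s$ with respect to $\mm\llcorner\bar U$, yielding $\mathcal U_M(\tilde\mu_s)=\mathcal U_M(\mu_s)$ for every $M\ge N$. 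The analogous identity for densities at the endpoints, $\tilde\varrho_i(\gamma_i)=\varrho_i(\gamma_i)$ for $\ppi$-a.e. $\gamma$, follows from the same extension-by-zero recipe.

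Applying the global $\RCD^*(K,N)$ inequality \eqref{eq:177} in $(X,\sfd,\mm)$ to $\tilde\mu_s$ and $\ppi$ then yields precisely \eqref{eq:177} for $\mu_s$ and $\ppi$ in $(\bar U,\sfd,\mm\llcorner\bar U)$, closing the argument. The most delicate step is the compatibility of the R\'enyi entropies and of the optimal geodesic plans across the two spaces; this is ultimately forced by the hypothesis $\mm(\partial U)=0$ (which makes singular parts transfer correctly) and by the geodesic assumption on $(\bar U,\sfd)$ (which rules out shortcuts through $X\setminus\bar U$ and identifies the two Wasserstein distances and optimal transport plans on measures supported in $\bar U$).
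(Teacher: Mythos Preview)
Your proposal is correct and follows essentially the same approach as the paper, which simply refers back to the $N=\infty$ case (Proposition~\ref{prop:globaloca}): use Proposition~\ref{prop:localK}(b) to get $(\bar U,\sfd,\mm\llcorner\bar U)\in\XX$, and then exploit that Wasserstein geodesics between measures supported in $\bar U$ can be taken entirely inside $\bar U$ so that the global convexity inequality \eqref{eq:177} restricts verbatim. One small wording point: since the paper's definition of $\RCD^*(K,N)$ requires \eqref{eq:177} along \emph{every} constant speed geodesic $(\mu_s)$, you should begin with an arbitrary such geodesic in $(\bar U,W_2)$ and then pick a representing plan $\ppi$, rather than ``select'' one; the lifting argument you describe then applies unchanged, because $W_2^{\bar U}=W_2^X$ on measures supported in $\bar U$.
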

\begin{proof}
The proof follows the same lines of Proposition~\ref{prop:globaloca}: first (independently of curvature assumptions) we obtain
from Proposition~\ref{prop:localK}(b) that the condition $(X,\sfd,\mm)$ localizes to $U$. Then, we use the fact that
one can find geodesics connecting probability measures in $\bar{U}$ lying entirely in $\bar{U}$.
\end{proof}

\begin{theorem} [Local-to-Global for $\RCD^*(K,N)$] 
Let $(X,\sfd,\mm)\in\XX$  be a length 
space and assume that there exists a covering $\{U_i\}_{i\in I}$ of
$X$ by 
non-empty open subsets such that $\mm(U_i)<\infty$ if $K<0$, 
and $(\bar{U}_i,\sfd,\mm\llcorner{\bar{U}_i})\in \XX$ satisfy $\RCD^*(K,N)$. 

Then $(X,\sfd,\mm)$ is a $\RCD^*(K,N)$ space. 
\end{theorem}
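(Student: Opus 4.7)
The plan is to follow the same scheme used in Theorem~\ref{thm:locagloba2} for the case $N=\infty$: convert the local $\RCD^*(K,N)$ conditions into local metric $\BE(K,N)$ conditions via the equivalence result Theorem~\ref{thm:BE8}, then glue them to a global metric $\BE(K,N)$ using Theorem~\ref{thm:LTGComp}, and finally return from $\BE(K,N)$ to $\RCD^*(K,N)$ by applying Theorem~\ref{thm:BE8} in the converse direction, which is unconditional.

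First, I would observe that since $N<\infty$, the local compactness assumption needed in Theorem~\ref{thm:LTGComp} is automatic: each $(\bar U_i,\sfd,\mm\llcorner\bar U_i)$ is locally compact by \cite[Corollary~2.4]{Sturm06II} (cited in the introduction), and since $\{U_i\}_{i\in I}$ is an open cover of $X$, local compactness propagates to $(X,\sfd)$. The length property of $(X,\sfd)$ is assumed. Next, I would apply the converse direction of Theorem~\ref{thm:BE8} to each $(\bar U_i,\sfd,\mm\llcorner\bar U_i)$: when $K\geq 0$ this step is immediate, while when $K<0$ the additional hypothesis $\mm(U_i)<\infty$ (and hence $\mm(\bar U_i)<\infty$, since $(\bar U_i,\sfd,\mm\llcorner\bar U_i)\in\XX$ forces $\mm(\partial U_i)$ to be controlled) provides exactly the finite measure condition required. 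This yields metric $\BE(K,N)$ on each $(\bar U_i,\sfd,\mm\llcorner\bar U_i)$.

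At this point Theorem~\ref{thm:LTGComp} applies directly and gives that $(X,\sfd,\mm)$ itself satisfies the metric $\BE(K,N)$ condition. The final step is to invoke the forward direction of Theorem~\ref{thm:BE8}, which holds with no finiteness or sign assumption on $K$: it converts the global metric $\BE(K,N)$ back into the global $\RCD^*(K,N)$ condition, completing the argument.

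The only delicate point is the discrepancy in the hypotheses of the converse direction of Theorem~\ref{thm:BE8}: we are given a local finiteness assumption ($\mm(U_i)<\infty$ when $K<0$) that is tailored precisely so that each local chart satisfies the finite-measure requirement, even though $\mm(X)$ itself may well be infinite. This is the reason the globalization needs the detour through $\BE(K,N)$: the $\BE$ side admits an \emph{unconditional} Local-to-Global statement (Theorem~\ref{thm:LTGComp}) that does not care about $\mm(X)$, while the return from $\BE(K,N)$ to $\RCD^*(K,N)$ does not require any measure-finiteness hypothesis. Apart from this matching of assumptions, the proof is a direct chain of applications of the three previous results and requires essentially no new calculation.
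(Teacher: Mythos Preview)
Your proposal is correct and follows exactly the same route as the paper: convert each local $\RCD^*(K,N)$ chart to metric $\BE(K,N)$ via Theorem~\ref{thm:BE8} (using $\mm(\bar U_i)<\infty$ when $K<0$), note local compactness from $N<\infty$, apply Theorem~\ref{thm:LTGComp} to globalize $\BE(K,N)$, and return to $\RCD^*(K,N)$ by the forward direction of Theorem~\ref{thm:BE8}. The paper's proof is the same three-line chain of citations, with no additional ingredients.
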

\begin{proof} Since $\mm(\bar{U}_i)<\infty$ if $K<0$, we know from Theorem~\ref{thm:BE8}
  that all spaces $(\bar{U}_i,\sfd,\mm\llcorner{\bar{U}_i})$ are
  metrically $\BE(K,N)$ and they are also locally compact,
  so that $(X,\sfd)$ is locally compact. 
  Therefore 
  Theorem~\ref{thm:LTGComp} applies and shows that $(X,\sfd,\mm)$
  is a metrically
  $\BE(K,N)$ space and we conclude 
  applying Theorem~\ref{thm:BE8} once more.
\end{proof}

\def\cprime{$'$}

%\def\cprime{$'$}
%\bibliographystyle{siam}
%\bibliography{bibliografia_local_global.bib}
\end{document}